\newtheorem{Thm}{Theorem}
\newtheorem{Prop}[Thm]{Proposition}
\newtheorem{Lemma}[Thm]{Lemma}
\newtheorem{Cor}[Thm]{Corollary}
\theoremstyle{definition}
\newtheorem{Def}[Thm]{Definition}
\newtheorem{Conj}[Thm]{Conjecture}
\theoremstyle{remark}
\newtheorem{Example}{Example}
\newtheorem*{Remark}{Remark}
\title{Universal enveloping of a graded Lie algebra}
\author{Felipe Yukihide Yasumura}
\address{Department of Mathematics, Instituto de Matem\'atica e Estat\'istica, Universidade de S\~ao Paulo, SP, Brazil}
\email{fyyasumura@ime.usp.br}
\thanks{Supported by S\~ao Paulo Research Foundation (FAPESP), grant 2018/23690-6.}
\begin{document}
\begin{abstract}
In this paper we construct a graded universal enveloping algebra of a $G$-graded Lie algebra, where $G$ is not necessarily an abelian group. If the grading group is abelian, then it coincides with the classical construction. We prove the existence and uniqueness of the graded enveloping algebra. As consequences, we prove a graded variant of Witt's Theorem on the universal enveloping algebra of the free Lie algebra, and the graded version of Ado's Theorem, which states that every finite-dimensional Lie algebra admits a faithful finite dimensional representation. Furthermore we investigate if a Lie grading is equivalent to an abelian grading.
\end{abstract}
\maketitle

\section{Introduction}
The universal enveloping algebra of a Lie algebra is a classical and important construction, and it associates the representations of a Lie algebra to representations of an associative algebra, see any standard book on Lie algebra, for instance, \cite{Jac1979}. In this paper, we investigate the construction but starting with a $G$-graded Lie algebra where $G$ is a group. It is well-known that, if $G$ is abelian, then the universal enveloping algebra of a Lie algebra inherits a natural $G$-grading. However, it is known that the universal enveloping algebra need not be a graded algebra if the grading group $G$ is not abelian.

Let $G$ be an abelian group and $\mathcal{L}$ a $G$-graded Lie algebra. Then we can consider an ordered homogeneous basis $\{e_i\mid i\in I\}$ of $\mathcal{L}$. Let $[e_i,e_j]=\sum_{\ell}\alpha_{ij\ell}e_\ell$. Denote by $X_G=\bigcup_{g\in G}X_g$ where $X_g=\{x_1^{(g)},x_2^{(g)},\ldots\}$, and by $\mathbb{F}\langle X_G\rangle$ the free $G$-graded associative algebra over the field $\mathbb{F}$. The universal enveloping algebra of $\mathcal{L}$ is $U(\mathcal{L})\cong\mathbb{F}\langle X_G\rangle/J$, where $J$ is the (ordinary) ideal generated by all $\{x_ix_j-x_jx_i-\sum_{\ell\in I}\alpha_{ij\ell}x_\ell\mid i,j\in I\}$. Since the elements generating $J$ are homogeneous, we get that $J$ is a graded ideal. Thus, $U(\mathcal{L})$ is a $G$-graded algebra.

We extend the above construction for a $G$-graded Lie algebra where $G$ is a not necessarily abelian group (see \Cref{gr_universal} and \Cref{strong_gr_universal}). Our construction agrees with the classical one when the group is abelian. We find a PBW basis (\Cref{gradedPBW}). As a consequence, we prove an adapted version of Witt's Theorem in the context of the free $G$-graded Lie algebra (\Cref{completewitt}). We also deduce a graded version of Ado's Theorem, that is, we show that every finite-dimensional $G$-graded Lie algebra is a graded vector subspace of a finite-dimensional $G$-graded associative algebra (\Cref{graded_ado}). We study the problem if every group grading on a Lie algebra is equivalent to an abelian group grading. We apply our constructions and find an equivalent formulation to this problem (\Cref{problem_equiv}).

\section{Notations and Preliminaries}
Let $G$ be a group and $\mathcal{A}$ an algebra (not necessarily associative nor Lie), over a field $\mathbb{F}$. A \emph{$G$-grading} on $\mathcal{A}$ is a vector space decomposition $\mathcal{A}=\bigoplus_{g\in G}\mathcal{A}_g$ such that $\mathcal{A}_g\mathcal{A}_h\subseteq\mathcal{A}_{gh}$, for all $g$, $h\in G$. A $G$-graded algebra is an algebra endowed with a $G$-grading. The vector subspace $\mathcal{A}_g$ is called the \emph{homogeneous component} of degree $g$, and its nonzero elements are called homogeneous of degree $g$. When the decomposition is not explicitly given, we shall write $(\mathcal{A})_g$ to denote the homogeneous component of degree $g$. Given $0\ne x\in\mathcal{A}_g$, we denote $\deg_Gx=g$, or simply $\deg x=g$ when there is no risk of ambiguity. The \emph{support} of the grading is defined as
\[
\mathrm{Supp}\,\Gamma=\{g\in G\mid\mathcal{A}_g\ne0\}.
\]
A vector subspace $\mathcal{S}\subseteq\mathcal{A}$ is said to be \emph{graded} if $\mathcal{S}=\bigoplus\mathcal{A}_g\cap\mathcal{S}$. A \emph{graded subalgebra} is a subalgebra which is a graded subspace. Analogously we define a \emph{graded ideal}. If $\mathcal{S}$ is a graded ideal of $\mathcal{A}$, then the quotient $\mathcal{A}/\mathcal{S}$ inherits the structure of $G$-graded algebra.

Given two $G$-graded algebras $\mathcal{A}=\bigoplus_{g\in G}\mathcal{A}_g$ and $\mathcal{C}=\bigoplus_{g\in G}\mathcal{C}_g$, a $G$-graded homomorphism is an algebra homomorphism $\psi:\mathcal{A}\to\mathcal{C}$ such that $\psi(\mathcal{A}_g)\subseteq\mathcal{C}_g$, for all $g\in G$.

Let $\Gamma:\mathcal{A}=\bigoplus_{g\in G}\mathcal{A}_g$ and $\Gamma':\mathcal{A}=\bigoplus_{h\in H}\mathcal{A}_h'$ be two gradings on the same algebra $\mathcal{A}$. We say that $\Gamma$ and $\Gamma'$ are \emph{equivalent} if there exists an algebra automorphism $\psi$ of $\mathcal{A}$, such that for each $g\in G$ there exists $h\in H$ satisfying $\psi(\mathcal{A}_g)=\mathcal{A}_h'$. We say that a grading $\Gamma$ is \emph{realized} by a group $G$ if there exists a $G$-grading equivalent to $\Gamma$. We say that a grading is \emph{abelian} if it is realized by an abelian group. 

It is worth mentioning that if two gradings on the same algebra $\mathcal{A}$ are equivalent by an isomorphism $\psi$ and $I\subseteq\mathcal{A}$ is a graded ideal, then $I$ and $\psi(I)$ are equivalent and so are $\mathcal{A}/I$ and $\mathcal{A}/\psi(I)$.

\subsection{Universal group of a grading} We follow \cite{EK13} in this and next subsections. Given a $G$-grading, it is relevant to consider the group with minimal amount of relations that realizes the grading. Here is the formal  definition.
\begin{Def}[{\cite[Definition 1.17]{EK13}}]
Let $\Gamma$ be a $G$-grading on an algebra $\mathcal{A}$. The \emph{universal grading group} of $\Gamma$ is the group $U(\Gamma)$ such that, for every realization of $\Gamma$ as an $H$-grading, there exists a unique group homomorphism $U(\Gamma)\to H$ that is the identity map on $\mathrm{Supp}\,\Gamma$.
\end{Def}
The universal grading group of a group grading $\Gamma$ always exists. It may be taken as the group with the set of generators $\mathrm{Supp}\,\Gamma$, and relations $s_1s_2=s_3$, for $s_1$, $s_2$, $s_3\in\mathrm{Supp}\,\Gamma$ whenever $0\ne\mathcal{A}_{s_1}\mathcal{A}_{s_2}\subseteq\mathcal{A}_{s_3}$ (see \cite[Proposition 1.18]{EK13}).

It is also relevant to consider the \emph{universal abelian grading group} of a grading $\Gamma$. It is defined by the abelianization of $U(\Gamma)$, that is, $U_\mathrm{ab}(\Gamma)=U(\Gamma)/U(\Gamma)'$, where $G'$ is the commutator group of the group $G$. It is not always true that the universal abelian group grading realizes the grading $\Gamma$. Indeed, some homogeneous components may coalesce under this new group. The grading is realized by $U_\mathrm{ab}(\Gamma)$ if and only if the initial grading $\Gamma$ is abelian.

\subsection{Refinement and coarsening} Let $\Gamma:\mathcal{A}=\bigoplus_{g\in G}\mathcal{A}_g$ and $\Gamma':\mathcal{A}=\bigoplus_{h\in H}\mathcal{A}_h'$ be two gradings on $\mathcal{A}$. 
We say that $\Gamma'$ is a \emph{refinement} of $\Gamma$ (or that $\Gamma$ is a \emph{coarsening} of $\Gamma'$) if for every $h\in\mathrm{Supp}\,\Gamma'$, there exists $g\in\mathrm{Supp}\,\Gamma$ such that $\mathcal{A}_h'\subseteq\mathcal{A}_g$. The following is relevant to us:
\begin{Lemma}[{\cite[Part of Proposition 1.25]{EK13}}]\label{morphism_coarsening}
Let $\Gamma$ and $\Gamma'$ be gradings on $\mathcal{A}$, and assume that $\Gamma'$ is realized as a $H$-grading and is a coarsening of $\Gamma$. Then, there exists a group epimorphism $p:U(\Gamma)\to H$ such that $p(\mathrm{Supp}\,\Gamma)=\mathrm{Supp}\,\Gamma'$.
\end{Lemma}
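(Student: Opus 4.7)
The plan is to exploit the explicit presentation of $U(\Gamma)$ recalled just after the definition of the universal grading group: $U(\Gamma)$ is generated by $\mathrm{Supp}\,\Gamma$ subject to the relations $s_1 s_2 = s_3$ for all triples with $0\ne\mathcal{A}_{s_1}\mathcal{A}_{s_2}\subseteq\mathcal{A}_{s_3}$. Accordingly, it suffices to produce a set map $\pi:\mathrm{Supp}\,\Gamma\to H$ satisfying these relations; its extension $p:U(\Gamma)\to H$ will then be a group homomorphism, and surjectivity onto $\mathrm{Supp}\,\Gamma'$ will identify it as an epimorphism (under the standing convention from \cite{EK13} that $H$ is generated by $\mathrm{Supp}\,\Gamma'$).

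First I would define $\pi$ using the coarsening hypothesis. For $g\in\mathrm{Supp}\,\Gamma$, by definition of coarsening there exists $h\in\mathrm{Supp}\,\Gamma'$ with $\mathcal{A}_g\subseteq\mathcal{A}_h'$. Since $\mathcal{A}_g\ne 0$ and the decomposition $\mathcal{A}=\bigoplus_{h\in H}\mathcal{A}_h'$ is direct, such $h$ is unique, so the assignment $\pi(g)=h$ is well defined. Every element of $\mathrm{Supp}\,\Gamma'$ is hit: if $\mathcal{A}_h'\ne 0$, then splitting any nonzero element of $\mathcal{A}_h'$ into its $\Gamma$-homogeneous components and applying the uniqueness just established forces each of those components to lie in $\mathcal{A}_h'$, so some $g\in\mathrm{Supp}\,\Gamma$ satisfies $\pi(g)=h$. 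Thus $\pi(\mathrm{Supp}\,\Gamma)=\mathrm{Supp}\,\Gamma'$.

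The main step is to verify that $\pi$ respects the defining relations of $U(\Gamma)$. Suppose $s_1,s_2,s_3\in\mathrm{Supp}\,\Gamma$ with $0\ne\mathcal{A}_{s_1}\mathcal{A}_{s_2}\subseteq\mathcal{A}_{s_3}$, and let $h_i=\pi(s_i)$. Since $\mathcal{A}_{s_i}\subseteq\mathcal{A}_{h_i}'$, we obtain
\[
0\ne\mathcal{A}_{s_1}\mathcal{A}_{s_2}\subseteq\mathcal{A}_{h_1}'\mathcal{A}_{h_2}'\subseteq\mathcal{A}_{h_1h_2}',
\]
and simultaneously $\mathcal{A}_{s_1}\mathcal{A}_{s_2}\subseteq\mathcal{A}_{s_3}\subseteq\mathcal{A}_{h_3}'$. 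A nonzero subspace cannot lie in two distinct homogeneous components of the direct sum $\Gamma'$, so $h_1h_2=h_3$, i.e.\ $\pi(s_1)\pi(s_2)=\pi(s_3)$. By the universal property, $\pi$ extends to a group homomorphism $p:U(\Gamma)\to H$; together with the surjection $\pi(\mathrm{Supp}\,\Gamma)=\mathrm{Supp}\,\Gamma'$ and the convention that $\mathrm{Supp}\,\Gamma'$ generates $H$, this gives the desired epimorphism. The only subtlety in the argument is the use of directness to pass from the inclusions $\mathcal{A}_{s_1}\mathcal{A}_{s_2}\subseteq\mathcal{A}_{h_1h_2}'\cap\mathcal{A}_{h_3}'$ (with both superspaces nonzero) to the equality of group elements $h_1h_2=h_3$; once this is in place, everything else reduces to bookkeeping around the universal property.
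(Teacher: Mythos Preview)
The paper does not supply its own proof of this lemma; it simply imports the statement from \cite[Proposition~1.25]{EK13}. Your argument is correct and is essentially the standard one: use the presentation of $U(\Gamma)$ by generators $\mathrm{Supp}\,\Gamma$ and relations $s_1s_2=s_3$, define $\pi$ via the unique $\Gamma'$-component containing each $\mathcal{A}_g$, check the relations using directness, and extend. Your caveat about the ``epimorphism'' conclusion requiring that $H$ be generated by $\mathrm{Supp}\,\Gamma'$ is well placed; this is indeed the standing convention in \cite{EK13}, though the present paper does not state it explicitly.
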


\subsection{Free algebra} For each $g\in G$, we let $X_g=\{x_1^{(g)},x_2^{(g)},\ldots\}$, and $X_G=\bigcup_{g\in G}X_g$. Then, the free associative algebra $\mathbb{F}\langle X_G\rangle$, freely generated by $X_G$, has a natural $G$-grading where the monomials are homogeneous and
$$
\deg_G x_{i_1}^{(g_1)}\cdots x_{i_m}^{(g_m)}=g_1\cdots g_m.
$$
It satisfies the following universal property: for any associative $G$-graded algebra $\mathcal{A}$, and each map $\psi_0\colon X_G\to\mathcal{A}$ respecting the degrees (that is, $\psi_0(x_i^{(g)})\in\mathcal{A}_g$), there exists a unique $G$-graded algebra homomorphism $\psi\colon \mathbb{F}\langle X_G\rangle\to\mathcal{A}$ extending $\psi_0$. Hence, we can define a $G$-graded polynomial identity of a $G$-graded associative algebra as an element of the set
\[
\bigcap_{\substack{\psi\colon \mathbb{F}\langle X_G\rangle\to\mathcal{A}\\\text{$G$-graded homomorphism}}}\mathrm{Ker}\,\psi.
\]

\subsection{Free pair} We recall the notion of a  variety of associative-Lie pairs. An associative Lie-pair is a pair $(\mathcal{L},\mathcal{A})$ where $\mathcal{A}$ is an associative algebra generated by $\mathcal{L}$, and $\mathcal{L}$ is a Lie algebra which is a vector subspace of $\mathcal{A}$, and the product of the $\mathcal{L}$ coincides with the restriction of the commutator of $\mathcal{A}$ to $\mathcal{L}$.

Given two pairs $(\mathcal{L},\mathcal{A})$ and $(\mathcal{H},\mathcal{C})$, a homomorphism of pairs is an algebra homomorphism $\psi\colon \mathcal{A}\to\mathcal{C}$ such that $\psi(\mathcal{L})\subseteq\mathcal{H}$. Hence, $\psi$ restricts to a Lie homomorphism $\mathcal{L}\to\mathcal{H}$.

Let $\mathscr{W}$ be a class of associative-Lie pairs. A pair $(\mathcal{L},\mathcal{F})$ is free in the class $\mathscr{W}$, freely generated by $X$, if for any pair $(\mathcal{H},\mathcal{C})$ in $\mathscr{W}$, and for any map $\psi_0\colon X\to\mathcal{H}$, there exists a unique homomorphism of pairs from $(\mathcal{L},\mathcal{F})$ to $(\mathcal{H},\mathcal{B})$ extending $\psi_0$. It is clear that $(\mathcal{L}(X),\mathbb{F}\langle X\rangle)$, where $\mathcal{L}(X)$ is the Lie subalgebra of the free associative algebra generated by $X$ (that is, the free Lie algebra) is a free pair, freely generated by $X$, in the class of all associative-Lie pairs. Now, due to the existence of a free associative-Lie pair, we can speak of polynomial identities of pairs. By tradition these are called \emph{weak polynomial identities} of the pair $(\mathcal{L},\mathcal{A})$ (or \emph{identities of representations of Lie algebras}).

The above discussion can be extended to the context of $G$-graded associative-Lie pairs $(\mathcal{L},\mathcal{A})$: a $G$-graded associative algebra $\mathcal{A}$, and a $G$-graded Lie algebra $\mathcal{L}$ that is a $G$-graded subspace of $\mathcal{A}$, where the product of $\mathcal{L}$ is the commutator of $\mathcal{A}$ restricted to $\mathcal{L}$. We shall prove below that there exists a $G$-graded free associative-Lie pair.

\section{Graded Universal Enveloping Algebra}
We let $G$ be a non-necessarily abelian group.

\noindent\textbf{Notation.} By a \emph{$G$-pair} $(\mathcal{L},\mathcal{A})$, we understand a $G$-graded associative-Lie pair, i.e. $\mathcal{A}$ is a $G$-graded associative algebra, $\mathcal{L}$ is a $G$-graded subspace of $\mathcal{A}$ such that $\mathcal{L}$ is also a $G$-graded Lie algebra with respect to the commutator of $\mathcal{A}$ restricted to $\mathcal{L}$. Equivalently, a $G$-pair is a pair $(\mathcal{L},\iota)$, where $\mathcal{L}$ is a $G$-graded Lie algebra, $\mathcal{A}$ is a $G$-graded associative algebra, and $\iota\colon\mathcal{L}\to\mathcal{A}$ is a graded linear map such that $\iota\colon\mathcal{L}\to\mathcal{A}^{(-)}$ is an (ungraded) Lie monomorphism.

Note that the $G$-grading on the vector space $\mathcal{A}$ does not necessarily define a $G$-graded Lie algebra with respect to the commutator. If this is the case, then we shall call the $G$-pair $(\mathcal{L},\mathcal{A})$ a \emph{strong $G$-pair}.

\begin{Def}
Let $\mathcal{L}$ be a $G$-graded Lie algebra. The \emph{$G$-graded universal enveloping algebra} of $\mathcal{L}$ is a $G$-pair $(\mathcal{L}, U_G(\mathcal{L}))$ such that: (i) $U_G(\mathcal{L})$ is generated by $\mathcal{L}$, and (ii) for every $G$-pair $(\mathcal{H},\mathcal{A})$ and every $G$-graded Lie homomorphism $\mathcal{L}\to\mathcal{H}$, there exists an extension to a $G$-graded algebra homomorphism $U_G(\mathcal{L})\to\mathcal{A}$.
\end{Def}

As it is customary, we shall call the algebra $U_G(\mathcal{L})$ a $G$-graded universal enveloping algebra of $\mathcal{L}$. We shall prove the existence and uniqueness of $U_G(\mathcal{L})$. It will be an appropriate quotient of the usual universal enveloping algebra $U(\mathcal{L})$.

\begin{Remark}
In what follows, we may weaken the definition of a pair, replacing the condition of $\iota:\mathcal{L}\to\mathcal{A}^{(-)}$ being a monomorphism to ask $\iota$ to be only a homomorphism. This is because we cannot guarantee (until \Cref{gradedPBW}) that $\mathcal{L}\to U_G(\mathcal{L})$ is an embedding.
\end{Remark}

\begin{Lemma}\label{lem1}
Let $(\mathcal{L},\mathcal{A})$ be a $G$-pair and let $x$, $y\in\mathcal{L}$ be homogeneous elements such that $\deg x=g$, $\deg y = h$, and $gh\ne hg$. Then $xy=0$.
\end{Lemma}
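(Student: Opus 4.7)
The plan is to exploit the fact that, by definition of a $G$-pair, $\mathcal{L}$ is a $G$-graded subspace of $\mathcal{A}$, so the intrinsic Lie grading on $\mathcal{L}$ is induced from the associative grading on $\mathcal{A}$; in particular $\mathcal{L}_k \subseteq \mathcal{A}_k$ for every $k \in G$. This will let me confront the two gradings against each other.

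First I compute the commutator $[x,y] = xy - yx$ inside $\mathcal{A}$. Because $\mathcal{L}$ is a $G$-graded Lie algebra under the restricted bracket, $[x,y]$ lies in $\mathcal{L}_{gh}$, and therefore in $\mathcal{A}_{gh}$. On the other hand, reading $xy$ and $yx$ in the associative grading, I have $xy \in \mathcal{A}_{gh}$ and $yx \in \mathcal{A}_{hg}$, and these two homogeneous components of $\mathcal{A}$ are distinct by the hypothesis $gh \neq hg$. Projecting the equality $xy - yx = [x,y] \in \mathcal{A}_{gh}$ onto the component $\mathcal{A}_{hg}$ therefore forces $-yx = 0$, i.e.\ $yx = 0$.

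To obtain $xy = 0$ I repeat the argument with the roles of $x$ and $y$ swapped: since $hg \neq gh$, the element $[y,x] = yx - xy$ belongs to $\mathcal{L}_{hg} \subseteq \mathcal{A}_{hg}$, and projecting onto $\mathcal{A}_{gh}$ gives $-xy = 0$, hence $xy = 0$.

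There is essentially no obstacle in this argument; the only conceptual point is that a $G$-pair carries two a priori independent pieces of grading data — the Lie grading on $\mathcal{L}$ and the associative grading on $\mathcal{A}$ — which the definition forces to coincide on $\mathcal{L}$, and it is this compatibility together with $gh \neq hg$ that makes the two monomials $xy$ and $yx$ sit in distinct homogeneous components and therefore vanish separately.
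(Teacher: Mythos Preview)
Your proof is correct and uses essentially the same idea as the paper: both exploit that $\mathcal{L}_k\subseteq\mathcal{A}_k$ to place the commutator in a single homogeneous component of $\mathcal{A}$ and then use $gh\ne hg$ to separate $xy$ from $yx$. The only cosmetic difference is that the paper gets $xy=0$ in one stroke---writing $xy=yx+(-[y,x])\in\mathcal{A}_{hg}$ and intersecting with $\mathcal{A}_{gh}$---whereas you first project to obtain $yx=0$ and then repeat the argument symmetrically; both are equally valid.
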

\begin{proof}
Write $\mathcal{L}=\bigoplus_{g\in G}\mathcal{L}_g$ and $\mathcal{A}=\bigoplus_{g\in G}\mathcal{A}_g$, then $\mathcal{L}_g\subseteq\mathcal{A}_g$, for all $g\in G$.

By $g=\deg x$ and $h=\deg y$ we get $yx\in\mathcal{A}_{hg}$ and $xy\in\mathcal{A}_{gh}$. On the other hand,
\[
-[y,x]=-yx+xy \in \mathcal{L}_{hg}\subseteq \mathcal{A}_{hg}.
\]
Thus $xy=yx+(-yx+xy)\in\mathcal{A}_{hg}\cap\mathcal{A}_{gh}=0$.
\end{proof}
In the language of polynomial identities, we have:
\begin{Cor}\label{weakidentities}
Let $(\mathcal{L},\mathcal{A})$ be a $G$-pair. Then the pair satisfies the weak $G$-graded polynomial identities
$$
x_1^{(g)}x_2^{(h)}=0,\quad [g,h]\ne1.\qed
$$
\end{Cor}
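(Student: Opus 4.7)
The plan is to unwind the definition of a weak $G$-graded polynomial identity of a pair and apply \Cref{lem1} pointwise. Recall from the discussion of the free pair in Section 2 that an element $f\in\mathbb{F}\langle X_G\rangle$ is a weak $G$-graded polynomial identity of $(\mathcal{L},\mathcal{A})$ precisely when $f$ lies in the kernel of every $G$-graded algebra homomorphism $\psi\colon\mathbb{F}\langle X_G\rangle\to\mathcal{A}$ that sends each $x_i^{(g)}\in X_g$ to an element of $\mathcal{L}_g$. So I only need to verify, for an arbitrary such $\psi$, that $\psi(x_1^{(g)}x_2^{(h)})=0$ whenever $gh\ne hg$.

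Fix $g$, $h\in G$ with $gh\ne hg$ and fix such a $\psi$. Set $u:=\psi(x_1^{(g)})\in\mathcal{L}_g$ and $v:=\psi(x_2^{(h)})\in\mathcal{L}_h$; if either vanishes, the conclusion is trivial, so assume both are nonzero. They are homogeneous elements of $\mathcal{L}$ of degrees $g$ and $h$ respectively, so \Cref{lem1} applies directly and gives $uv=0$ in $\mathcal{A}$. Since $\psi$ is an algebra homomorphism, $\psi(x_1^{(g)}x_2^{(h)})=uv=0$, and this holds for every admissible $\psi$; hence $x_1^{(g)}x_2^{(h)}$ is a weak $G$-graded polynomial identity of $(\mathcal{L},\mathcal{A})$.

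There is no genuine obstacle in this argument: the corollary is just the translation of \Cref{lem1} into the PI language set up at the end of Section 2. The only point worth flagging is that the existence of homomorphisms $\psi\colon\mathbb{F}\langle X_G\rangle\to\mathcal{A}$ sending $X_g$ into $\mathcal{L}_g$ is guaranteed by the universal property of the free graded associative algebra (applied to any choice of images in $\mathcal{L}_g$), which is exactly the setup used to define weak graded identities of pairs.
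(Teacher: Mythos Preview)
Your proof is correct and follows exactly the approach the paper intends: the corollary is marked with \qed and no separate proof because it is nothing more than \Cref{lem1} rephrased in the language of weak $G$-graded polynomial identities, which is precisely what you do by evaluating an arbitrary admissible $\psi$ and invoking \Cref{lem1}.
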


Let $\mathcal{L}$ be a $G$-graded Lie algebra with a homogeneous vector space basis $\mathcal{B}=\{e_i\mid i\in N\}$, where $N$ is ordered, and denote $[e_i,e_j]=\sum_{\ell\in N}\alpha_{ij}^{(k)}e_k$. Let $X_\mathcal{B}=\{x_i\mid i\in I\}$. Let $\mathbb{F}\langle X_\mathcal{B}\rangle$ be the free associative $G$-graded algebra, freely generated by $X_\mathcal{B}$, where $\deg x_i=\deg_Ge_i$, for each $i\in N$. Define $I_\mathcal{B}$ to be the graded ideal generated by
\begin{equation}\label{elem_gen}
\{x_ix_j-x_jx_i-\sum_{k}\alpha_{ij}^{(k)}x_k\mid i,j\in N\},
\end{equation}
that is, $I_\mathcal{B}$ is the least graded ideal containing all the elements above.  Denote further by $J$ the ungraded ideal generated by the same elements, so that $U(\mathcal{L})\cong\mathbb{F}\langle X_\mathcal{B}\rangle/J$. The algebra $\mathbb{F}\langle X_\mathcal{B}\rangle/I_\mathcal{B}$ is graded, and $(\mathcal{L},\mathbb{F}\langle X_\mathcal{B}\rangle/I_\mathcal{B})$ is a $G$-pair.

\begin{Lemma}\label{lem2}
Let $J_\mathcal{B}$ be the ideal of $U(\mathcal{L})$ generated by
\[
\{e_ie_j\mid i,j\in N,\,[\deg e_i,\deg e_j]\ne1\}.
\]
Then
\[
\mathbb{F}\langle X_\mathcal{B}\rangle/I_\mathcal{B}\cong U(\mathcal{L})/J_\mathcal{B}.
\]
\end{Lemma}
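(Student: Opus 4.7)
The plan is to identify the graded ideal $I_\mathcal{B}$ explicitly as the ordinary ideal $K$ of $\mathbb{F}\langle X_\mathcal{B}\rangle$ generated by $J$ together with $\{x_ix_j\mid[\deg e_i,\deg e_j]\ne1\}$; the isomorphism then follows from the correspondence of ideals under the projection $\mathbb{F}\langle X_\mathcal{B}\rangle\twoheadrightarrow U(\mathcal{L})=\mathbb{F}\langle X_\mathcal{B}\rangle/J$, whose image of $K$ is exactly $J_\mathcal{B}$ (the $x_ix_j$ map to the $e_ie_j$).

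The first step is to decompose the generators $r_{ij}=x_ix_j-x_jx_i-\sum_k\alpha_{ij}^{(k)}x_k$ into homogeneous components. Writing $g=\deg e_i$ and $h=\deg e_j$, the grading relation $[\mathcal{L}_g,\mathcal{L}_h]\subseteq\mathcal{L}_{gh}$ forces $\alpha_{ij}^{(k)}=0$ unless $\deg e_k=gh$. Hence $r_{ij}$ splits into a degree-$gh$ part $x_ix_j-\sum_k\alpha_{ij}^{(k)}x_k$ and a degree-$hg$ part $-x_jx_i$, which coincide (so that $r_{ij}$ itself is homogeneous) precisely when $gh=hg$.

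Since $I_\mathcal{B}$ is graded and contains every $r_{ij}$, it contains each homogeneous component of $r_{ij}$; in particular, whenever $[g,h]\ne1$, it contains $x_jx_i$. Consequently $I_\mathcal{B}\supseteq K$. For the reverse inclusion I would check that $K$ is itself graded: it is generated by the homogeneous elements $\{x_ix_j\mid[\deg e_i,\deg e_j]\ne1\}$ together with the homogeneous components of the $r_{ij}$ (in the non-commuting case, these are the two pieces just identified, both of which lie in $K$; in the commuting case $r_{ij}$ is already homogeneous). An ideal generated by homogeneous elements is graded, and it contains every $r_{ij}$, so by the minimality that defines $I_\mathcal{B}$ we conclude $I_\mathcal{B}\subseteq K$.

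With $I_\mathcal{B}=K$ established, the third isomorphism theorem yields
\[
\mathbb{F}\langle X_\mathcal{B}\rangle/I_\mathcal{B}=\mathbb{F}\langle X_\mathcal{B}\rangle/K\cong\bigl(\mathbb{F}\langle X_\mathcal{B}\rangle/J\bigr)\big/(K/J)=U(\mathcal{L})/J_\mathcal{B},
\]
as desired. The only delicate point is the bookkeeping of the homogeneous decomposition of $r_{ij}$ when $gh\ne hg$; beyond this, the argument is entirely formal.
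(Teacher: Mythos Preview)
Your proof is correct and follows essentially the same route as the paper: both show that $I_\mathcal{B}$ coincides with the ordinary ideal $J+K$ (with $K$ generated by the $x_ix_j$ having non-commuting degrees) and then pass to $U(\mathcal{L})/J_\mathcal{B}$ via the third isomorphism theorem. The only minor difference is in obtaining $x_jx_i\in I_\mathcal{B}$ when $gh\ne hg$: you read it off directly as the degree-$hg$ homogeneous component of $r_{ij}$, whereas the paper deduces it by applying \Cref{lem1} to the $G$-pair $(\mathcal{L},\mathbb{F}\langle X_\mathcal{B}\rangle/I_\mathcal{B})$; your argument is slightly more self-contained, while the paper's packages this step as a general fact about $G$-pairs that it reuses later.
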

\begin{proof}
Since $J\subseteq I_\mathcal{B}$, there exists a surjective algebra homomorphism $\pi:U(\mathcal{L})\to\mathbb{F}\langle X_\mathcal{B}\rangle/I_\mathcal{B}$. By Lemma \ref{lem1}, since $(\mathcal{L},\mathbb{F}\langle X_\mathcal{B}\rangle/I_\mathcal{B})$ is a $G$-pair, then $J_G\subseteq\mathrm{Ker}\,\pi$. So, we get that
\[
I_\mathcal{B}\supseteq J+K,
\]
where $K$ is the (possibly ungraded) ideal $\langle x_ix_j\mid i,j\in N,\,[\deg x_i,\deg x_j]\ne1\rangle$. Note that $K$ is actually graded, since its generators are homogeneous elements. Now, each of the elements \eqref{elem_gen} is either homogeneous (this is the case whenever $[\deg x_i,\deg x_j]=1$), or it belongs to $K$ (if  $[\deg x_i,\deg x_j]\ne1$). Hence, $J+K$ is a graded ideal. Thus we obtain that $J+K\supseteq I_\mathcal{B}$, proving the result.
\end{proof}

Now we can prove the existence of a $G$-graded universal enveloping algebra of $\mathcal{L}$.

\begin{Lemma}
The $G$-pair ($\mathcal{L},\mathbb{F}\langle X_\mathcal{B}\rangle/I_\mathcal{B})$ is a $G$-graded universal enveloping algebra of $\mathcal{L}$.
\end{Lemma}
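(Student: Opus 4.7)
The plan is to verify the two defining conditions directly from the construction, using the Remark to allow the map $\mathcal{L}\to\mathbb{F}\langle X_\mathcal{B}\rangle/I_\mathcal{B}$ to be merely a Lie homomorphism. First I would confirm that $(\mathcal{L},\mathbb{F}\langle X_\mathcal{B}\rangle/I_\mathcal{B})$ is a $G$-pair: the ideal $I_\mathcal{B}$ is graded by construction, so the quotient is a $G$-graded associative algebra, and the assignment $\iota\colon e_i\mapsto x_i+I_\mathcal{B}$ gives a degree-preserving linear map which is a Lie homomorphism into the commutator algebra, since the relations $x_ix_j-x_jx_i-\sum_k\alpha_{ij}^{(k)}x_k$ are killed in the quotient. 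Generation by $\iota(\mathcal{L})$ is immediate because the cosets of the $x_i$ generate the quotient.

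For the universal property, given a $G$-pair $(\mathcal{H},\mathcal{A})$ and a $G$-graded Lie homomorphism $\varphi\colon\mathcal{L}\to\mathcal{H}\subseteq\mathcal{A}$, I would define a degree-respecting set map $\psi_0\colon X_\mathcal{B}\to\mathcal{A}$ by $\psi_0(x_i)=\varphi(e_i)$, which lies in $\mathcal{A}_{\deg e_i}$. The universal property of the free $G$-graded associative algebra then produces a $G$-graded algebra homomorphism $\psi\colon\mathbb{F}\langle X_\mathcal{B}\rangle\to\mathcal{A}$ extending $\psi_0$.

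The key step is to show $I_\mathcal{B}\subseteq\mathrm{Ker}\,\psi$, so that $\psi$ factors through the quotient. Applying $\psi$ to each generator of $I_\mathcal{B}$:
\[
\psi\Bigl(x_ix_j-x_jx_i-\sum_k\alpha_{ij}^{(k)}x_k\Bigr)=[\varphi(e_i),\varphi(e_j)]-\varphi([e_i,e_j])=0,
\]
because $\varphi$ is a Lie homomorphism and the bracket on $\mathcal{H}$ is the restriction of the commutator of $\mathcal{A}$. Since $\psi$ is $G$-graded, $\mathrm{Ker}\,\psi$ is a graded ideal; hence it contains the \emph{graded} ideal generated by these elements, which is exactly $I_\mathcal{B}$. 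The induced map $\bar\psi\colon\mathbb{F}\langle X_\mathcal{B}\rangle/I_\mathcal{B}\to\mathcal{A}$ is then a $G$-graded algebra homomorphism extending $\varphi$, as required.

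There is no substantial obstacle: the ideal $I_\mathcal{B}$ was designed precisely to be the smallest graded ideal killing the bracket relations, which makes the verification essentially formal. The only delicate point worth flagging is that we rely on the weakened notion of pair from the Remark, because injectivity of $\iota\colon\mathcal{L}\to\mathbb{F}\langle X_\mathcal{B}\rangle/I_\mathcal{B}$ cannot be asserted yet; that will be a consequence of the graded PBW theorem \Cref{gradedPBW}.
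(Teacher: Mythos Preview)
Your argument is correct but follows a different route from the paper's. The paper extends $\varphi$ first through the \emph{ungraded} universal enveloping algebra $U(\mathcal{L})$, then invokes \Cref{lem1} to show that the extra generators $e_ie_j$ with $[\deg e_i,\deg e_j]\ne1$ lie in the kernel, and finally appeals to \Cref{lem2} to identify $U(\mathcal{L})/J_\mathcal{B}$ with $\mathbb{F}\langle X_\mathcal{B}\rangle/I_\mathcal{B}$. You instead lift $\varphi$ directly through the free $G$-graded associative algebra and use the single observation that the kernel of a graded homomorphism is itself a graded ideal, which therefore contains the graded ideal $I_\mathcal{B}$ generated by the bracket relations. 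Your approach is shorter and bypasses both preparatory lemmas entirely; the paper's detour through $U(\mathcal{L})$ has the compensating advantage of making transparent the description of $U_G(\mathcal{L})$ as a specific quotient of the classical enveloping algebra, which feeds into the spanning set in \Cref{gr_universal}, but for the bare existence of the universal object your route is cleaner.
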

\begin{proof}
Let $(\mathcal{H},\mathcal{A})$ be a $G$-pair and $\varphi\colon\mathcal{L}\to\mathcal{H}$ be a $G$-graded Lie homomorphism. Then $\varphi$ admits an extension (also denoted by $\varphi$) to an algebra homomorphism $\varphi\colon U(\mathcal{L})\to\mathrm{alg}(\mathcal{H})\subseteq\mathcal{A}$.  Here $\mathrm{alg}(\mathcal{H})$ is the associative subalgebra generated by $\mathcal{H}$. If $[\deg e_i,\deg e_j]\ne1$, then Lemma \ref{lem1} tells us that
\[
0=\varphi(e_i)\varphi(e_j)=\varphi(e_ie_j).
\]
Thus, $\varphi$ factors through $J_\mathcal{B}$, that is, there exists a map $\bar{\varphi}\colon U(\mathcal{L})/J_\mathcal{B}\to\mathcal{A}$. By Lemma \ref{lem2}, $U(\mathcal{L})/J_\mathcal{B}\cong\mathbb{F}\langle X_\mathcal{B}\rangle/I_\mathcal{B}$. The map $\bar{\varphi}$ is a graded map and extends $\varphi$, concluding the proof.
\end{proof}

Finally, it is a standard exercise to prove that the $G$-graded universal enveloping algebra is unique, up to an isomorphism.
\begin{Lemma}
Let $U_G$ and $V_G$ be $G$-graded universal enveloping algebras of $\mathcal{L}$. Then there exists a $G$-graded isomorphism $\iota\colon U_G\to V_G$ such that $\iota(x)=x$, for each $x\in\mathcal{L}$.
\end{Lemma}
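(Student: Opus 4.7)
The plan is the standard universal-property argument: construct $G$-graded algebra homomorphisms in both directions and show they are mutually inverse. Denote by $\iota_U\colon\mathcal{L}\to U_G$ and $\iota_V\colon\mathcal{L}\to V_G$ the structural Lie homomorphisms packaged with the two $G$-pairs; each is a $G$-graded Lie homomorphism by definition of a $G$-pair.

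Applying the universal property of $U_G$ to the target pair $(\mathcal{L},V_G)$ with the Lie homomorphism $\iota_V\colon\mathcal{L}\to\mathcal{L}\subseteq V_G$, I obtain a $G$-graded algebra homomorphism $\Phi\colon U_G\to V_G$ with $\Phi\circ\iota_U=\iota_V$. Symmetrically, the universal property of $V_G$ yields $\Psi\colon V_G\to U_G$ with $\Psi\circ\iota_V=\iota_U$. Then $\Psi\circ\Phi\colon U_G\to U_G$ is a $G$-graded algebra homomorphism satisfying $(\Psi\circ\Phi)\circ\iota_U=\iota_U$, and the same identity is obviously satisfied by $\mathrm{id}_{U_G}$. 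Since condition (i) of the definition requires that $U_G$ be generated as an associative algebra by (the image of) $\mathcal{L}$, any two graded algebra homomorphisms out of $U_G$ that agree on $\iota_U(\mathcal{L})$ must coincide; hence $\Psi\circ\Phi=\mathrm{id}_{U_G}$, and symmetrically $\Phi\circ\Psi=\mathrm{id}_{V_G}$. Setting $\iota:=\Phi$ produces the required $G$-graded isomorphism, and $\iota\circ\iota_U=\iota_V$ is exactly the statement ``$\iota(x)=x$ for all $x\in\mathcal{L}$'' under the customary identification of $\mathcal{L}$ with its images in $U_G$ and $V_G$.

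The only delicate point is that the definition of a $G$-graded universal enveloping algebra, as stated, only guarantees the \emph{existence} of extensions, not their uniqueness. However, uniqueness comes for free from the generation hypothesis (i), which is precisely what makes the composition argument above work. Everything else is formal category theory, so I expect no further obstacles.
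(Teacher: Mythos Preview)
Your proof is correct and follows essentially the same approach as the paper's: extend the identity map in both directions via the universal property, then show the compositions are the identity. Your version is in fact slightly more explicit than the paper's in one respect: you spell out that uniqueness of the extension (needed to conclude $\Psi\circ\Phi=\mathrm{id}$) comes from the generation condition (i) rather than being asserted directly by the universal property, whereas the paper simply says the composition ``must be the identity map.''
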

\begin{proof}
Since $(\mathcal{L},V_G)$ is a pair, the identity map $\mathcal{L}\to\mathcal{L}$ admits an extension to a $G$-graded algebra homomorphism $\iota\colon U_G\to V_G$. Conversely, the identity map also admits an extension to a $G$-graded homomorphism $j\colon V_G\to U_G$. Since the composition $j\iota\colon U_G\to U_G$ fixes all the elements of $\mathcal{L}$, it must be the identity map. Similarly, $\iota j$ is the identity map.
\end{proof}

We summarize our results. Of course, from a vector space basis of $U(\mathcal{L})$, we obtain a set of generators of $U_G(\mathcal{L})$ as a vector space.

\begin{Thm}\label{gr_universal}
Let $\mathcal{L}$ be a $G$-graded Lie algebra, where $G$ is a non-necessarily abelian group. Then it admits a unique, up to an isomorphism, $G$-graded universal enveloping algebra $U_G(\mathcal{L})$. If $\{e_i\mid i\in I\}$ is an ordered homogeneous basis of $\mathcal{L}$, then
\[
e_{i_1}\cdots e_{i_m}, m\ge0,\, i_1\le\cdots\le i_m,\,[\deg e_{i_j},\deg e_{i_{j+1}}]=1, j\in\{1,2,\ldots,m-1\},
\]
is a set of homogeneous elements that spans $U_G(\mathcal{L})$.\qed
\end{Thm}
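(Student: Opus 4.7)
The plan is to package the three preceding lemmas to get existence and uniqueness, and then to extract the spanning set from the description $U_G(\mathcal{L})\cong U(\mathcal{L})/J_\mathcal{B}$ obtained in Lemma \ref{lem2}, combined with the classical PBW basis of $U(\mathcal{L})$.

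For existence and uniqueness, nothing new is required: the previous lemma exhibits $(\mathcal{L},\mathbb{F}\langle X_\mathcal{B}\rangle/I_\mathcal{B})$ as a $G$-graded universal enveloping algebra, and the subsequent lemma shows that any two such pairs are joined by a $G$-graded isomorphism fixing $\mathcal{L}$. So only the last assertion needs argument.

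For the spanning set, I fix the ordered homogeneous basis $\{e_i\mid i\in I\}$ and recall that by the classical Poincar\'e--Birkhoff--Witt theorem the monomials $e_{i_1}\cdots e_{i_m}$ with $m\ge 0$ and $i_1\le\cdots\le i_m$ form a basis of $U(\mathcal{L})$. Via the isomorphism $U_G(\mathcal{L})\cong U(\mathcal{L})/J_\mathcal{B}$ of Lemma \ref{lem2}, their images span $U_G(\mathcal{L})$. Now suppose some such monomial has a consecutive pair $e_{i_j}e_{i_{j+1}}$ with $[\deg e_{i_j},\deg e_{i_{j+1}}]\ne 1$; then this product is one of the generators of the two-sided ideal $J_\mathcal{B}$, so the whole monomial
\[
e_{i_1}\cdots e_{i_{j-1}}\,(e_{i_j}e_{i_{j+1}})\,e_{i_{j+2}}\cdots e_{i_m}
\]
lies in $J_\mathcal{B}$ and maps to $0$ in $U_G(\mathcal{L})$. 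Discarding these from the PBW basis leaves precisely the claimed list of monomials, which therefore still spans.

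There is no real obstacle left; the only step requiring attention is the bookkeeping in the previous paragraph, namely that the condition $[\deg e_{i_j},\deg e_{i_{j+1}}]\ne 1$ on \emph{one} consecutive pair is already enough to throw the monomial into $J_\mathcal{B}$, which is why the surviving spanning set consists of PBW monomials with \emph{every} consecutive pair of degrees commuting (and not the stronger, and false in general, condition that all pairs of degrees commute). If the group $G$ happens to be abelian, the condition is vacuous and one recovers the classical PBW spanning set, consistent with the fact that in that case $J_\mathcal{B}=0$ and $U_G(\mathcal{L})=U(\mathcal{L})$.
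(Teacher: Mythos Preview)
Your proposal is correct and follows precisely the approach the paper intends: the theorem is stated there as a summary (``We summarize our results'') with a $\qed$, deferring existence and uniqueness to the two preceding lemmas and the spanning set to the remark that a basis of $U(\mathcal{L})$ yields generators of $U_G(\mathcal{L})\cong U(\mathcal{L})/J_\mathcal{B}$. Your write-up spells out the one step the paper leaves implicit---that a PBW monomial with a single consecutive non-commuting pair already lies in $J_\mathcal{B}$---which is exactly the right observation.
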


\begin{Example}
If $G$ is abelian and $\mathcal{L}$ is a $G$-graded algebra, then $U_G(\mathcal{L})=U(\mathcal{L})$.
\end{Example}

\begin{Example}\label{abelian_lie_example}
Let $G=C_2\ast C_2=\langle g,h\mid g^2=h^2=1\rangle$, and $\mathcal{L}=\mathrm{Span}\{x,y\}$ be the $2$-dimensional abelian Lie algebra. Define a $G$-grading on $\mathcal{L}$ where $\deg x=g$ and $\deg y=h$. Then, it is known that $U(\mathcal{L})\cong\mathbb{F}[X,Y]$, the polynomial algebra in two commuting variables. Since $U_G(\mathcal{L})\cong\mathbb{F}[X,Y]/\langle XY\rangle$, one obtains that $U_G(\mathcal{L})$ is the associative unital subalgebra of $\mathbb{F}[X]\oplus\mathbb{F}[Y]$ generated by $X$ and $Y$, that is,
\[
U_G(\mathcal{L})=\{\alpha(1,1)+(F(X),G(Y))\mid\alpha\in\mathbb{F},F(X)\in\mathbb{F}[X],G(Y)\in\mathbb{F}[Y]\}.
\]
\end{Example}

\section{Strong pair\label{variety}}
We assume that $G$ is a not necessarily abelian group. We investigate when $U_G(\mathcal{L})$ is a $G$-graded Lie algebra with respect to the commutator. To this end, we need to investigate strong pairs, that is,  $G$-pairs $(\mathcal{L},\mathcal{A})$, where $\mathcal{A}$ is a $G$-graded Lie algebra with respect to the commutator. Let $\mathscr{V}_G$ denote the variety of $G$-graded associative algebras satisfying the polynomial identities
\[
\left\{x^{(g)}_1x_2^{(h)}=0\mid g,h\in G,[g,h]\ne1\right\}.
\]
Whenever we have a strong pair $(\mathcal{L},\mathcal{A})$, from \Cref{lem1}, we have $\mathcal{A}\in\mathscr{V}_G$.

Denote by $X_G=\bigcup_{g\in G}X_g$, where $X_g=\{x_1^{(g)},x_2^{(g)},\ldots\}$, for each $g\in G$. Let $\mathbb{F}_G(X_G)$ be the relatively free algebra in $\mathscr{V}_G$, freely generated by $X_G$. Then
$$
\mathbb{F}_G(X_G)\cong\mathbb{F}\langle X_G\rangle/\langle x_1^{(g)}x_2^{(h)}\mid [g,h]\ne1\rangle^{T_G}.
$$

We slightly modify the definition of $G$-graded universal enveloping algebra of a Lie algebra.
\begin{Def}
Let $\mathcal{L}$ be a $G$-graded Lie algebra. The \emph{strong $G$-graded universal enveloping} of $\mathcal{L}$ is a strong pair $(\mathcal{L},\mathrm{SU}_G(\mathcal{L}))$ such that for every strong pair $(\mathcal{H},\mathcal{A})$, each $G$-graded Lie homomorphism $\mathcal{L}\to\mathcal{H}$ admits a unique extension to a $G$-graded algebra homomorphism $\mathrm{SU}_G(\mathcal{L})\to\mathcal{A}$.
\end{Def}
A standard argument shows that if a strong $G$-graded universal enveloping algebra exists, then it is unique up to an isomorphism that is the identity map on $\mathcal{L}$. Moreover, $\mathcal{L}$ generates $\mathrm{SU}_G(\mathcal{L})$. If it exists, it is a quotient of the $G$-graded universal enveloping algebra.

\begin{Lemma}\label{quotient}
If a strong $G$-graded universal enveloping algebra of $\mathcal{L}$ exists, then it is a quotient of $U_G(\mathcal{L})$.
\end{Lemma}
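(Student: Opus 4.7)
The plan is to use the universal property of $U_G(\mathcal{L})$ directly, after observing that every strong $G$-pair is in particular a $G$-pair. More precisely, by definition a strong $G$-pair $(\mathcal{L},\mathcal{A})$ requires $\mathcal{A}$ to be a $G$-graded Lie algebra under the commutator, but it still satisfies all the conditions of an ordinary $G$-pair. So the pair $(\mathcal{L},\mathrm{SU}_G(\mathcal{L}))$ qualifies as a $G$-pair.

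With that in hand, I would invoke the universal property established in \Cref{gr_universal}: the identity map $\mathcal{L}\to\mathcal{L}$, viewed as a $G$-graded Lie homomorphism into the Lie part of the $G$-pair $(\mathcal{L},\mathrm{SU}_G(\mathcal{L}))$, extends to a $G$-graded associative algebra homomorphism $\pi\colon U_G(\mathcal{L})\to\mathrm{SU}_G(\mathcal{L})$ that is the identity on $\mathcal{L}$.

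Finally, I would observe (as remarked in the paragraph preceding the statement) that $\mathcal{L}$ generates $\mathrm{SU}_G(\mathcal{L})$ as an associative algebra. Since the image of $\pi$ contains $\mathcal{L}$ and is a subalgebra, it must equal all of $\mathrm{SU}_G(\mathcal{L})$; hence $\pi$ is surjective and $\mathrm{SU}_G(\mathcal{L})\cong U_G(\mathcal{L})/\ker\pi$, which exhibits it as a graded quotient of $U_G(\mathcal{L})$.

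There is really no obstacle here; the argument is a direct application of universal properties. The only subtlety worth flagging is the need to explicitly use that $\mathcal{L}$ generates $\mathrm{SU}_G(\mathcal{L})$ in order to get surjectivity (otherwise the universal property yields only a homomorphism, not a quotient map). This is guaranteed by the standard argument alluded to just before the lemma, which parallels the uniqueness proof for $U_G(\mathcal{L})$.
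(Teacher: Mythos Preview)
Your proof is correct and is essentially identical to the paper's: both observe that a strong pair is in particular a $G$-pair, extend the identity on $\mathcal{L}$ via the universal property of $U_G(\mathcal{L})$, and conclude surjectivity from the fact that $\mathcal{L}$ generates $\mathrm{SU}_G(\mathcal{L})$. The only cosmetic difference is that you cite \Cref{gr_universal} while the paper appeals directly to the defining universal property.
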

\begin{proof}
If $(\mathcal{L},\mathrm{SU}_G(\mathcal{L}))$ is a strong pair, then it is also a $G$-pair. Hence there exists an extension of the identity map $\mathcal{L}\to\mathcal{L}$ to an algebra homomorphism $U_G(\mathcal{L})\to\mathrm{SU}_G(\mathcal{L})$. The statement follows since $\mathcal{L}$ generates $\mathrm{SU}_G(\mathcal{L})$.
\end{proof}

\begin{Example}
If $G$ is an abelian group, then every $G$-pair is a strong pair. Hence, $\mathrm{SU}_G(\mathcal{L})$ exists and it coincides with $U_G(\mathcal{L})$ (which in turn equals $U(\mathcal{L})$).
\end{Example}

\begin{Example}
Consider once  again \Cref{abelian_lie_example}: let $G=C_2\ast C_2=\langle g,h\mid g^2=h^2=1\rangle$ and $\mathcal{L}=\mathrm{Span}\{x,y\}$ the $2$-dimensional abelian Lie algebra, where $\deg x=g$ and $\deg y=h$. The $G$-graded universal enveloping algebra of $\mathcal{L}$ is a Lie algebra with respect to the commutator (indeed, $U_G(\mathcal{L})$ is a commutative algebra). In particular, it satisfies the definition of the strong $G$-graded universal enveloping algebra, so $\mathrm{SU}_G(\mathcal{L})=U_G(\mathcal{L})$.
\end{Example}

\begin{Example}
More generally, if $U_G(\mathcal{L})$ happens to be a $G$-graded Lie algebra with respect to the commutator, then $\mathrm{SU}_G(\mathcal{L})$ exists and it coincides with $U_G(\mathcal{L})$. Conversely, if $\mathrm{SU}_G(\mathcal{L})=U_G(\mathcal{L})$, then $U_G(\mathcal{L})^{(-)}$ is a $G$-graded Lie algebra.
\end{Example}

Now, we shall prove the existence of a strong $G$-graded universal enveloping algebra. The construction is similar to that of the $G$-graded universal enveloping algebra, but we need to work in the variety $\mathscr{V}_G$.

Let $\mathcal{L}$ be a $G$-graded Lie algebra, and let $\mathcal{B}=\{e_i\mid i\in N\}$ be a homogeneous basis of $\mathcal{L}$. Consider the structure constants $\alpha_{ij}^{(k)}$, $i$, $j$, $k\in N$, that is,
\[
[e_i,e_j]=\sum_{k\in N}\alpha_{ij}^{(k)}e_k.
\]
We let $X_\mathcal{B}=\{z_i=x_i^{(g_i)}\mid i\in N, \,g_i=\deg e_i\}$, and let $\mathbb{F}_G(X_\mathcal{B})$ be the relatively free algebra in $\mathscr{V}_G$, freely generated by $X_\mathcal{B}$. Let $J_\mathcal{B}$ be the ideal of $\mathbb{F}_G(X_\mathcal{B})$ generated by all
\[
z_iz_j-z_jz_i-\sum_{k\in N}\alpha_{ij}^{(k)}z_k,\quad i,j\in N.
\]
The above elements are homogeneous. Indeed, either $[\deg z_i,\deg z_j]=1$, or $z_iz_j=z_jz_i=[z_i,z_j]=0$. Thus $J_\mathcal{B}$ is a graded ideal.

\begin{Thm}\label{strong_gr_universal}
$\mathrm{SU}_G(\mathcal{L})=\mathbb{F}_G(X_\mathcal{B})/J_\mathcal{B}$.
\end{Thm}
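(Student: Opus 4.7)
The plan is to verify two things: that $(\mathcal{L},\mathbb{F}_G(X_\mathcal{B})/J_\mathcal{B})$ is indeed a strong $G$-pair, and that it satisfies the universal mapping property of $\mathrm{SU}_G(\mathcal{L})$. Uniqueness up to isomorphism fixing $\mathcal{L}$ follows from the standard abstract-nonsense argument already mentioned in the text, so the work is only in existence.

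For the first point, write $A=\mathbb{F}_G(X_\mathcal{B})/J_\mathcal{B}$. It is a $G$-graded associative algebra because, as observed just before the theorem, the generators of $J_\mathcal{B}$ are homogeneous, so $J_\mathcal{B}$ is a graded ideal. The key observation is that any algebra $\mathcal{C}\in\mathscr{V}_G$ is automatically a $G$-graded Lie algebra under the commutator: for $x\in\mathcal{C}_g$, $y\in\mathcal{C}_h$, if $[g,h]\ne 1$ the defining identity forces $xy=yx=0$, hence $[x,y]=0$, while if $[g,h]=1$ then $gh=hg$ and both $xy,yx\in\mathcal{C}_{gh}$, so $[x,y]\in\mathcal{C}_{gh}$. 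Since $A$ is a quotient of $\mathbb{F}_G(X_\mathcal{B})\in\mathscr{V}_G$, it lies in $\mathscr{V}_G$, hence $A^{(-)}$ is $G$-graded. Next, define $\iota\colon\mathcal{L}\to A$ by $\iota(e_i)=z_i+J_\mathcal{B}$ extended linearly; this is graded by construction, and the relations in $J_\mathcal{B}$ guarantee $\iota([e_i,e_j])=\sum_k\alpha_{ij}^{(k)}\iota(e_k)=[\iota(e_i),\iota(e_j)]$, so $\iota$ is a $G$-graded Lie homomorphism into $A^{(-)}$, giving a strong $G$-pair in the weakened sense of the Remark following \Cref{lem1}.

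For the universal property, let $(\mathcal{H},\mathcal{A})$ be any strong $G$-pair and $\varphi\colon\mathcal{L}\to\mathcal{H}$ a $G$-graded Lie homomorphism. Because $(\mathcal{H},\mathcal{A})$ is strong, \Cref{lem1} gives $\mathcal{A}\in\mathscr{V}_G$. Now the degree-preserving map $\varphi_0\colon X_\mathcal{B}\to\mathcal{A}$, $z_i\mapsto\varphi(e_i)$, extends by freeness of $\mathbb{F}_G(X_\mathcal{B})$ in the variety $\mathscr{V}_G$ to a unique graded algebra homomorphism $\widetilde{\varphi}\colon\mathbb{F}_G(X_\mathcal{B})\to\mathcal{A}$. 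On each generator of $J_\mathcal{B}$,
\[
\widetilde{\varphi}\Bigl(z_iz_j-z_jz_i-\sum_k\alpha_{ij}^{(k)}z_k\Bigr)=[\varphi(e_i),\varphi(e_j)]-\varphi([e_i,e_j])=0,
\]
so $\widetilde{\varphi}$ descends to a graded homomorphism $A\to\mathcal{A}$ extending $\varphi$. Uniqueness of this extension is clear because $\mathcal{L}$ (or equivalently the $z_i+J_\mathcal{B}$) generates $A$ as an algebra.

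I do not anticipate a hard step: every ingredient is either a direct consequence of \Cref{lem1}, of the freeness of $\mathbb{F}_G(X_\mathcal{B})$ in $\mathscr{V}_G$, or of the Lie-homomorphism property of $\varphi$. The only point that merits care is the verification that being in $\mathscr{V}_G$ automatically upgrades an associative $G$-graded algebra to a $G$-graded Lie algebra under the commutator, since it is this observation that justifies why the single identity $x_1^{(g)}x_2^{(h)}=0$ (for $[g,h]\ne1$) is the right ambient variety in which to carry out the construction.
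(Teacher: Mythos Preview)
Your proof is correct and follows essentially the same route as the paper: use that $\mathcal{A}\in\mathscr{V}_G$ to extend the map on generators to a $G$-graded algebra homomorphism $\mathbb{F}_G(X_\mathcal{B})\to\mathcal{A}$, then check it kills $J_\mathcal{B}$. You are in fact slightly more thorough than the paper, since you also spell out why $\mathbb{F}_G(X_\mathcal{B})/J_\mathcal{B}$ is itself a strong pair (via the observation that membership in $\mathscr{V}_G$ forces the commutator to be $G$-graded), a point the paper leaves implicit in the surrounding discussion.
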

\begin{proof}
Let $(\mathcal{H},\mathcal{A})$ be a strong pair, and let $f_0\colon \mathcal{L}\to\mathcal{H}$ be a $G$-graded Lie homomorphism. Then $f_0$ restricts to a map $X_\mathcal{B}\to\mathcal{H}\subseteq\mathcal{A}$ which  respects the degrees. Since $\mathcal{A}\in\mathscr{V}_G$, the restriction of $f_0$ extends to a $G$-graded algebra homomorphism $f\colon \mathbb{F}_G(X_\mathcal{B})\to\mathcal{A}$. Since both $\mathbb{F}_G(X_\mathcal{B})$ and $\mathcal{A}$ are $G$-graded Lie algebras with respect to the commutators, then $f$ factors through $J_\mathcal{B}$. Thus $f_0$ admits an extension to a $G$-graded algebra homomorphism $\mathbb{F}_G(X_\mathcal{B})/J_\mathcal{B}\to\mathcal{A}$.
\end{proof}
As a consequence, every $G$-graded Lie algebra is a subalgebra of some $\mathcal{A}^{(-)}$, where $\mathcal{A}$ is a $G$-graded associative algebra such that $\mathcal{A}^{(-)}$ is a $G$-graded Lie algebra.

\noindent\textbf{Notation.} Whenever it is convenient, we shall identify the homogeneous variables $z_i\in X_\mathcal{B}$ with the basis elements $e_i\in\mathcal{B}$. So, $\mathrm{SU}_G(\mathcal{L})$ is spanned by all elements $e_{i_1}\cdots e_{i_m}$, where $e_{i_1}$, \dots, $e_{i_m}\in\mathcal{B}$.

\newcommand{\gab}{g.a.s.~}
\section{A PBW basis for $\mathrm{SU}_G(\mathcal{L})$}

In this section, we shall find a homogeneous vector space basis for $\mathrm{SU}_G(\mathcal{L})$. For, we let $X_G=\bigcup_{g\in G}X_g$ be a set of $G$-homogeneous variables (where each $X_g$ is finite or not). We start finding a homogeneous vector space basis for $\mathbb{F}_G(X_G)$.

\noindent\textbf{Notation.} If $\{g_1,\ldots,g_m\}$ is a subset of a group, then the abbreviation \gab means that $\{g_1,\ldots,g_m\}$ generates an abelian subgroup.

\begin{Lemma}\label{generators}
The relatively free algebra $\mathbb{F}_G(X_G)$ has, as a $G$-homogeneous vector space basis, all monomials of the kind
$$
x_{i_1}^{(g_1)}\cdots x_{i_m}^{(g_m)},\quad\{g_1,\ldots,g_m\}\text{ \gab}.
$$
\end{Lemma}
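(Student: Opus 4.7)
The plan is to realize $\mathbb{F}_G(X_G)$ as $\mathbb{F}\langle X_G\rangle/I$, where $I$ is the $T_G$-ideal generated by $\{x_1^{(g)}x_2^{(h)}\mid [g,h]\ne 1\}$, and then show exactly which monomials form a basis modulo $I$. The first step is to describe $I$ as a vector subspace. A graded substitution into $x_1^{(g)}x_2^{(h)}$ is a product $a\cdot b$ of two homogeneous elements with $\deg a=g$, $\deg b=h$, and after closing under two-sided multiplication by arbitrary monomials and using linearity, $I$ is spanned by monomials of the form $M_1M_2M_3M_4$, where $M_2$, $M_3$ are non-empty monomials with $[\deg M_2,\deg M_3]\ne 1$. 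Call such monomials \emph{bad}. Since the monomials form a basis of $\mathbb{F}\langle X_G\rangle$, a given monomial lies in $I$ iff it is bad; in particular, the non-bad (\emph{good}) monomials descend to a linearly independent spanning set in the quotient.

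The next step — the technical heart of the proof — is to show that a monomial $M=x_{i_1}^{(g_1)}\cdots x_{i_m}^{(g_m)}$ is good iff $\{g_1,\ldots,g_m\}$ is \gab. The easy direction is that if $\{g_1,\ldots,g_m\}$ generates an abelian subgroup, then every contiguous sub-product $g_{p+1}\cdots g_r$ lies in that abelian subgroup, and any two such sub-products commute, so $M$ admits no bad split. The hard direction requires the following telescoping observation: if $[g_j,g_k]\ne 1$ for some $1\le j<k\le m$, set $h_s=g_{j+1}g_{j+2}\cdots g_s$ for $s=j,j+1,\ldots,k$ (with $h_j=1$), so that $g_s=h_{s-1}^{-1}h_s$. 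Were $g_j$ to commute with each $h_s$, it would then commute with $h_{s-1}^{-1}h_s=g_s$ for every $s\in\{j+1,\ldots,k\}$, hence with $g_k$, contradicting $[g_j,g_k]\ne 1$. So there is some $t\in\{j+1,\ldots,k\}$ with $[g_j,g_{j+1}\cdots g_t]\ne 1$, yielding the bad split $M=M_1\cdot x_{i_j}^{(g_j)}\cdot (x_{i_{j+1}}^{(g_{j+1})}\cdots x_{i_t}^{(g_t)})\cdot M_4$.

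The lemma then follows at once: good monomials span $\mathbb{F}_G(X_G)$ because every bad monomial is killed modulo $I$, and they are linearly independent in the quotient because $I$ as a vector subspace consists only of combinations of bad monomials. Each good monomial is manifestly homogeneous with respect to the $G$-grading inherited from $\mathbb{F}\langle X_G\rangle$. The main obstacle to watch out for is precisely the telescoping step: one must resist the temptation to look for a pair of \emph{adjacent} non-commuting degrees (which need not exist), and instead exploit the freedom to split $M$ at arbitrary positions so that the resulting sub-products have non-commuting degrees.
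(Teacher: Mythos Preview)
Your proof is correct and takes a genuinely different route from the paper. The paper proves linear independence by projection: after reducing (via the natural multigrading of a relatively free algebra by variable content) to a finite set $S$ of monomials built from a common variable set $Y$ whose degrees generate an abelian subgroup $H\subseteq G$, it maps $\mathbb{F}_G(X_G)$ onto the free associative algebra $\mathbb{F}\langle Y\rangle$, viewed as a $G$-graded algebra with zero components outside $H$, by sending every variable not in $Y$ to $0$; linear independence of $S$ is then read off in the target. Your approach instead identifies the $T_G$-ideal explicitly as the linear span of the bad monomials and then characterizes badness combinatorially via the telescoping step. What your argument buys is an explicit monomial description of the ideal and a self-contained proof that never leaves $\mathbb{F}\langle X_G\rangle$; what the paper's approach buys is that it never has to locate a specific bad split inside a non-\gab monomial, relying only on the existence of enough $G$-graded targets in $\mathscr{V}_G$ to separate the good monomials.
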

\begin{proof}
It is clear that such a set generates $\mathbb{F}_G(X_G)$ as a vector space. So, it is sufficient to prove that it is linearly independent. Consider a finite subset $S$ of the above monomials. We may assume that a fixed set of variables, say $Y=\{x_{i_1}^{(g_1)},\ldots,x_{i_m}^{(g_m)}\}$, appears in all the monomials of $S$. We can assume that the subgroup generated by $\{g_1,\ldots,g_m\}$ is abelian, otherwise every monomial in $S$ would be zero. So, let $H=\langle g_1,\ldots,g_m\rangle$ be the abelian subgroup of $G$ generated by $g_1$, \dots, $g_m$. The free associative $H$-graded algebra $\mathbb{F}\langle Y\rangle$, freely generated by $Y$, may be seen as a $G$-graded algebra, where the homogeneous components in $G\setminus H$ are $0$. Therefore, there exists a surjective $G$-graded algebra homomorphism $p:\mathbb{F}_G(X_G)\to\mathbb{F}\langle Y\rangle$ via
$$
p(x_j^{(g)})=\left\{\begin{array}{ll}%
x_j^{(g)},&\text{ if $x_j^{(g)}\in Y$},\\%
0,&\text{ otherwise}.\end{array}\right.
$$
Now, the set $S$ is linearly independent under the image of $p$. So, $S$ is linearly independent as well. The proof is complete.
\end{proof}

Now, let $G$ be any group and $\mathcal{L}$ a $G$-graded Lie algebra over an arbitrary field $\mathbb{F}$, and let $\mathcal{B}=\{e_i\mid i\in N\}$ be a homogeneous vector space basis of $\mathcal{L}$, where $N$ is ordered. Let
$$
[e_i,e_j]=\sum_{\ell\in N}\alpha_{ij}^{(\ell)}e_\ell
$$
be the structure constants of $\mathcal{L}$ with respect to $\mathcal{B}$. We shall consider the set of free variables $X_\mathcal{B}=\{e_i\mid i\in N\}$, and use the same letters to denote the homogeneous variables and the elements of $\mathcal{B}$. Let $\mathbb{F}_G(X_\mathcal{B})$ be the relatively free $G$-graded algebra in $\mathscr{V}_G$, freely generated by $X_\mathcal{B}$, and $\mathbb{F}\langle X_\mathcal{B}\rangle$ be the free associative $G$-graded algebra, freely generated by $X_\mathcal{B}$.

We let $\mathfrak{R}$ be the vector subspace of $\mathbb{F}\langle X_\mathcal{B}\rangle$ spanned by all
$$
e_{i_1}\cdots e_{i_m},\quad m\ge0, \, i_1\le\cdots\le i_m.
$$
The following is a classical result.
\begin{Lemma}[{\cite[Lemma 5.2]{Jac1979}}]\label{lemma}
There exists a linear map $\sigma_0:\mathbb{F}\langle X_\mathcal{B}\rangle\to\mathfrak{R}$ such that
\begin{eqnarray*}
&\sigma_01=1,&\\%
&\sigma_0(e_{i_1}\cdots e_{i_m})=e_{i_1}\cdots e_{i_m},\text{ if }i_1\le\cdots\le i_m,&\\%
&\displaystyle\sigma_0(e_{j_1}\cdots e_{j_m}-e_{j_1}\cdots e_{j_{k+1}}e_{j_k}\cdots e_{j_m})=\sigma_0(e_{j_1}\cdots\left(\sum_{\ell\in N}\alpha_{j_kj_{k+1}}^{(\ell)}e_\ell\right)\cdots e_{j_m}).&
\end{eqnarray*}
\end{Lemma}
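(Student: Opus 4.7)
The plan is to define $\sigma_0$ on monomials by induction and then extend linearly. To each monomial $e_{j_1}\cdots e_{j_m}\in\mathbb{F}\langle X_\mathcal{B}\rangle$ I attach the pair $(m,\iota)$, where $m$ is its length and $\iota=\#\{(a,b)\mid a<b,\,j_a>j_b\}$ is its inversion count, ordered lexicographically. The base cases are $\sigma_0(1)=1$ and $\sigma_0(e_{i_1}\cdots e_{i_m})=e_{i_1}\cdots e_{i_m}$ whenever $i_1\le\cdots\le i_m$. If $e_{j_1}\cdots e_{j_m}$ has some descent, pick any position $k$ with $j_k>j_{k+1}$ and set
\[
\sigma_0(e_{j_1}\cdots e_{j_m}) := \sigma_0(e_{j_1}\cdots e_{j_{k+1}}e_{j_k}\cdots e_{j_m}) + \sigma_0\Bigl(e_{j_1}\cdots\sum_{\ell\in N}\alpha_{j_kj_{k+1}}^{(\ell)}e_\ell\cdots e_{j_m}\Bigr).
\]
The first summand has the same length but strictly fewer inversions, and each monomial in the second summand has length $m-1$, so both are already defined by the inductive hypothesis. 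Extending linearly to $\mathbb{F}\langle X_\mathcal{B}\rangle$ gives $\sigma_0$; the first two conditions of the lemma are built in, and the third one is exactly the recursive formula above.

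The real work is to show that the output does not depend on the choice of descent $k$ used at each step; otherwise $\sigma_0$ of a single monomial would be ambiguous. This is a confluence (diamond) argument. Suppose the monomial has two descents, at positions $k<k'$. If $k'-k\ge 2$, the two swaps act on disjoint windows of the word and the equality of the two resulting values of $\sigma_0$ follows immediately from the inductive hypothesis applied to strictly smaller pairs $(m,\iota)$. The delicate case is $k'=k+1$: both orderings of reduction expand the window $e_{j_k}e_{j_{k+1}}e_{j_{k+2}}$ into a sum of shorter monomials, and their difference collapses, modulo strictly shorter monomials handled inductively, to the Jacobi sum
\[
[[e_{j_k},e_{j_{k+1}}],e_{j_{k+2}}] + [[e_{j_{k+1}},e_{j_{k+2}}],e_{j_k}] + [[e_{j_{k+2}},e_{j_k}],e_{j_{k+1}}] = 0.
\]

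I expect this Jacobi-type confluence check to be the main obstacle; it is the single step where the fact that $\mathcal{L}$ is a \emph{Lie} algebra enters the argument, and it is the technical heart of the classical PBW proof. Once this local case is settled, well-definedness propagates by induction to monomials with arbitrarily many descents, linearity extends $\sigma_0$ to all of $\mathbb{F}\langle X_\mathcal{B}\rangle$, and the three stated identities follow directly from the construction.
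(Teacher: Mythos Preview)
The paper does not supply a proof of this lemma at all: it is stated as a classical result and attributed to \cite[Lemma 5.2]{Jac1979}, then used as a black box to derive the graded analogue (\Cref{gradedlemma}). Your sketch is precisely the standard argument found there (or, equivalently, the diamond-lemma proof): define $\sigma_0$ by straightening monomials via adjacent swaps, induct on the lexicographic pair (length, inversion count), and verify local confluence, the only nontrivial overlap being $k'=k+1$, which is resolved by the Jacobi identity. So your approach matches the intended reference.

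One small point to tighten: your recursive rule is invoked only at descents $j_k>j_{k+1}$, whereas the third displayed identity in the lemma is claimed for \emph{every} $k$. You should remark that the case $j_k<j_{k+1}$ follows from the descent case by the antisymmetry $\alpha_{ij}^{(\ell)}=-\alpha_{ji}^{(\ell)}$, and that $j_k=j_{k+1}$ is trivial since both sides vanish. With that addendum the argument is complete.
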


As an immediate consequence, we have the following graded version. Let $\mathfrak{B}$ be the vector subspace of $\mathbb{F}_G(X_\mathcal{B})$ spanned by all
$$
e_{i_1}\cdots e_{i_m},\quad m\ge0, \, i_1\le\cdots\le i_m,\,\{\deg_Ge_{i_1},\ldots,\deg_Ge_{i_m}\}\text{ \gab}
$$
\begin{Lemma}\label{gradedlemma}
There exists a linear map $\sigma:\mathbb{F}_G(X_\mathcal{B})\to\mathfrak{B}$ such that
$$
\sigma(e_{i_1}\cdots e_{i_m})=0,
$$
if $\{\deg_Ge_{i_1},\ldots,\deg_Ge_{i_m}\}$ does not generate an abelian subgroup, and otherwise,
\begin{eqnarray*}
&\sigma1=1,&\\%
&\sigma(e_{i_1}\cdots e_{i_m})=e_{i_1}\cdots e_{i_m},\text{ if }i_1\le\cdots\le i_m,&\\%
&\displaystyle\sigma(e_{j_1}\cdots e_{j_m}-e_{j_1}\cdots e_{j_{k+1}}e_{j_k}\cdots e_{j_m})=\sigma(e_{j_1}\cdots\left(\sum_{\ell\in N}\alpha_{j_kj_{k+1}}^{(\ell)}e_\ell\right)\cdots e_{j_m}).&
\end{eqnarray*}
\end{Lemma}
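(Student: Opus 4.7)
The plan is to transfer the classical straightening map $\sigma_0$ of \Cref{lemma} to the graded setting by lifting to the free associative algebra $\mathbb{F}\langle X_\mathcal{B}\rangle$, where $\sigma_0$ already lives, and then showing the lift descends to $\mathbb{F}_G(X_\mathcal{B})$. Concretely, I would define $\tilde\sigma\colon\mathbb{F}\langle X_\mathcal{B}\rangle\to\mathbb{F}\langle X_\mathcal{B}\rangle$ on the monomial basis by setting $\tilde\sigma(e_{i_1}\cdots e_{i_m})=0$ whenever the degree set $\{g_{i_1},\ldots,g_{i_m}\}$ does not generate an abelian subgroup, and $\tilde\sigma(e_{i_1}\cdots e_{i_m})=\sigma_0(e_{i_1}\cdots e_{i_m})$ otherwise, extending by linearity.

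The first key step is to check that $\tilde\sigma$ sends each g.a.s.\ monomial into $\mathfrak{B}$. For such a monomial whose degrees generate the abelian subgroup $H=\langle g_{i_1},\ldots,g_{i_m}\rangle\le G$, I would induct on length together with the number of inversions, using that Jacobson's reduction replaces $e_ae_b$ by $e_be_a+\sum_\ell\alpha_{a,b}^{(\ell)}e_\ell$ and introduces only new factors $e_\ell$ of degree $g_ag_b\in H$. Thus every intermediate monomial keeps its degree set inside $H$, so the final linear combination of ordered monomials produced by $\sigma_0$ lies in $\mathfrak{B}$. The second key step is to verify that $\tilde\sigma$ vanishes on the $T_G$-ideal $K$ defining $\mathbb{F}_G(X_\mathcal{B})$: since $K$ is the two-sided ideal generated by all products $yz$ with $[\deg y,\deg z]\ne 1$, any monomial appearing in $K$ decomposes as $u\cdot yz\cdot v$ with $[\deg y,\deg z]\ne 1$, and the degrees $\deg y,\deg z$ are products of the variable degrees that do not commute, forcing the full degree set to be non-abelian. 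Hence $\tilde\sigma$ kills every such monomial and descends to the required linear map $\sigma\colon\mathbb{F}_G(X_\mathcal{B})\to\mathfrak{B}$.

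The three listed properties of $\sigma$ then follow from their analogues for $\sigma_0$: the vanishing clause is built into the definition, while $\sigma(1)=1$ and the identity on ordered g.a.s.\ monomials are inherited directly from \Cref{lemma}. For the swap identity, the crucial point is that the right-hand side summand $e_{j_1}\cdots e_\ell\cdots e_{j_m}$ has $\deg e_\ell=g_{j_k}g_{j_{k+1}}\in H$, so the shorter monomial is still g.a.s.; hence $\tilde\sigma=\sigma_0$ applies on both sides, and the identity reduces to the third relation of \Cref{lemma}. The main obstacle is the inductive argument in step one: one must carefully track that the abelian subgroup $H$ generated by the initial degrees is preserved through every reduction — including the recursive reductions applied to the shorter monomial after a bracket has been expanded — and that each structure-constant term stays within the $H$-graded part of $\mathcal{L}$, which is automatic once one notes that $\deg e_\ell=g_ag_b\in H$.
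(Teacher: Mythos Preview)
Your argument follows the same outline as the paper's --- lift to $\mathbb{F}\langle X_\mathcal{B}\rangle$, use the classical $\sigma_0$ of \Cref{lemma}, and descend to $\mathbb{F}_G(X_\mathcal{B})$ --- but you carry out the descent more carefully. The paper simply composes $\sigma_0$ with the induced projection $\mathfrak{R}\to\mathfrak{B}$ and asserts (invoking \Cref{generators}) that the composite $\mathbb{F}\langle X_\mathcal{B}\rangle\to\mathfrak{B}$ factors through $\mathbb{F}_G(X_\mathcal{B})$. You instead set $\tilde\sigma=0$ on non-g.a.s.\ monomials by fiat and let it equal $\sigma_0$ on g.a.s.\ ones; your inductive ``first key step'' --- that every reduction performed by $\sigma_0$ keeps all occurring degrees inside the abelian subgroup $H$ --- is precisely what ensures $\tilde\sigma$ lands in $\mathfrak{B}$. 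This extra care is not merely cosmetic: the bare composite $\mathrm{proj}\circ\sigma_0$ need not annihilate a non-g.a.s.\ monomial, since a commutator $[e_a,e_b]$ arising during straightening replaces the pair $g_a,g_b$ by the single degree $g_ag_b$, which may commute with a third degree $g_c$ even when $g_a$ alone does not, so g.a.s.\ ordered terms can survive the projection. Your definition sidesteps this, and then \Cref{generators} (the kernel is spanned by the non-g.a.s.\ monomials) gives the descent immediately --- this is cleaner than your $T_G$-ideal argument, though that works too. The verification of the three displayed properties is correct; for the swap identity the key point is exactly the one you isolate, that $\deg e_\ell=g_{j_k}g_{j_{k+1}}\in H$ keeps the shorter monomial g.a.s., so both sides remain in the regime where $\tilde\sigma=\sigma_0$ and the identity reduces to that of \Cref{lemma}.
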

\begin{proof}
The projection $\mathbb{F}\langle X_\mathcal{B}\rangle\to\mathbb{F}_G(X_\mathcal{B})$ induces a map $\mathfrak{R}\to\mathfrak{B}$. Let $\sigma_0:\mathbb{F}\langle X_\mathcal{B}\rangle\to\mathfrak{R}$ be the linear map of \Cref{lemma}. Then, we have the diagram

\tikzset{every picture/.style={line width=0.75pt}} 

\begin{tikzpicture}[x=0.75pt,y=0.75pt,yscale=-1,xscale=1]
\path (0,110);

\draw (721,21) node    {$0$};
\draw (701,71) node    {$0$};
\draw (202,59) node    {$\mathbb{F}\langle X_{\mathcal{B}} \rangle $};
\draw (322,60) node    {$\mathfrak{R}$};
\draw (315,129) node [anchor=north west][inner sep=0.75pt]   [align=left] {$\displaystyle \mathfrak{B}$};
\draw (189,129) node [anchor=north west][inner sep=0.75pt]   [align=left] {$\displaystyle \mathbb{F}_G(X_\mathcal{B})$};
\draw    (231.5,59.25) -- (310,59.9) ;
\draw [shift={(312,59.92)}, rotate = 180.48] [color={rgb, 255:red, 0; green, 0; blue, 0 }  ][line width=0.75]    (10.93,-3.29) .. controls (6.95,-1.4) and (3.31,-0.3) .. (0,0) .. controls (3.31,0.3) and (6.95,1.4) .. (10.93,3.29)   ;
\draw    (322,72) -- (322,123) ;
\draw [shift={(322,125)}, rotate = 270] [color={rgb, 255:red, 0; green, 0; blue, 0 }  ][line width=0.75]    (10.93,-3.29) .. controls (6.95,-1.4) and (3.31,-0.3) .. (0,0) .. controls (3.31,0.3) and (6.95,1.4) .. (10.93,3.29)   ;
\draw    (223.27,73) -- (310.33,130.32) ;
\draw [shift={(312,131.42)}, rotate = 213.36] [color={rgb, 255:red, 0; green, 0; blue, 0 }  ][line width=0.75]    (10.93,-3.29) .. controls (6.95,-1.4) and (3.31,-0.3) .. (0,0) .. controls (3.31,0.3) and (6.95,1.4) .. (10.93,3.29)   ;
\draw    (202.44,73) -- (204.03,123) ;
\draw [shift={(204.09,125)}, rotate = 268.19] [color={rgb, 255:red, 0; green, 0; blue, 0 }  ][line width=0.75]    (10.93,-3.29) .. controls (6.95,-1.4) and (3.31,-0.3) .. (0,0) .. controls (3.31,0.3) and (6.95,1.4) .. (10.93,3.29)   ;

\end{tikzpicture}

Since the kernel of $\mathbb{F}\langle X_\mathcal{B}\rangle\to\mathfrak{B}$ lies inside the kernel of $\mathbb{F}\langle X_\mathcal{B}\rangle\to\mathbb{F}_G(X_\mathcal{B})$ (see \Cref{generators}), we obtain the required map $\sigma:\mathbb{F}_G(X_\mathcal{B})\to\mathfrak{B}$.
\end{proof}

This gives a PBW basis for $\mathrm{SU}_G(\mathcal{L})$. More precisely, we have
\begin{Thm}\label{gradedPBW}
Let $G$ be any group and $\mathcal{L}$ be a $G$-graded Lie algebra over a field $\mathbb{F}$. Let $\{e_i\mid i\in N\}$ be a homogeneous basis of $\mathcal{L}$, where $N$ is ordered. Then, a homogeneous vector space basis of $\mathrm{SU}_G(\mathcal{L})$ is given by
$$
e_{i_1}\cdots e_{i_m},\quad i_1\le\cdots\le i_m,\,\{\deg_Ge_{i_1},\ldots,\deg_Ge_{i_m}\}\text{ \gab}
$$
\end{Thm}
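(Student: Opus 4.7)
The plan is to combine the three preceding lemmas to show that the proposed monomials form both a spanning set and a linearly independent set in $\mathrm{SU}_G(\mathcal{L})$, hence a basis.

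For spanning, I combine \Cref{gr_universal}, \Cref{quotient}, and \Cref{generators}. By \Cref{gr_universal}, the ordered monomials $e_{i_1}\cdots e_{i_m}$ with consecutive commuting degrees span $U_G(\mathcal{L})$, and by \Cref{quotient} their images still span the quotient $\mathrm{SU}_G(\mathcal{L}) = \mathbb{F}_G(X_\mathcal{B})/J_\mathcal{B}$. Since $\mathrm{SU}_G(\mathcal{L})$ is a quotient of $\mathbb{F}_G(X_\mathcal{B})$, whose nonzero elements are linear combinations of g.a.s.~monomials by \Cref{generators}, any spanning element not of the claimed form becomes a linear combination of g.a.s.~monomials. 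The commutation relations defining $J_\mathcal{B}$ then allow me to sort consecutive letters (swapping $e_i$, $e_j$ produces the bracket correction $\sum_k \alpha_{ij}^{(k)} e_k$, whose letters have degree $g_i g_j$ lying in the subgroup already generated by the remaining degrees, so the g.a.s.~property is preserved through the sort). Thus every element of $\mathrm{SU}_G(\mathcal{L})$ is a linear combination of ordered g.a.s.~monomials.

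For linear independence, I apply the map $\sigma\colon \mathbb{F}_G(X_\mathcal{B}) \to \mathfrak{B}$ of \Cref{gradedlemma}. The key claim is that $\sigma$ annihilates $J_\mathcal{B}$. An arbitrary element of $J_\mathcal{B}$ is a linear combination of sandwiches $w \cdot \bigl(z_i z_j - z_j z_i - \sum_k \alpha_{ij}^{(k)} z_k\bigr) \cdot w'$ for monomials $w$, $w'$, and rearranging the swap property of $\sigma$ gives exactly that $\sigma$ kills each such sandwich. Hence $\sigma$ descends to a well-defined linear map $\bar\sigma\colon \mathrm{SU}_G(\mathcal{L}) \to \mathfrak{B}$, and by the first two non-trivial defining properties of $\sigma$, $\bar\sigma$ restricts to the identity on the image of $\mathfrak{B}$ in $\mathrm{SU}_G(\mathcal{L})$. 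Consequently, the image of $\mathfrak{B}$ is linearly independent, and combined with the spanning above this yields the stated PBW basis.

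The main obstacle is the claim $\sigma(J_\mathcal{B}) = 0$. The swap property in \Cref{gradedlemma} is stated only for sandwich products of single-letter monomials, so I must extend it by linearity to arbitrary elements of $\mathbb{F}_G(X_\mathcal{B})$ surrounding the relation. One also has to verify that no case is lost when the ambient degrees fall outside a common abelian subgroup: in that situation both sides of the swap equation vanish trivially by the convention that $\sigma$ kills non-g.a.s.~monomials, so the identity still holds. Everything else is formal book-keeping, and the theorem follows.
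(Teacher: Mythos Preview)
Your proposal is correct and follows essentially the same approach as the paper: the spanning argument is a (somewhat more elaborate) version of the paper's one-line ``as in the classical case,'' and the linear independence step is exactly the paper's argument that $\sigma$ from \Cref{gradedlemma} vanishes on $J_\mathcal{B}$ and hence factors through $\mathrm{SU}_G(\mathcal{L})$. Your observations about extending the swap identity by linearity and about the non-g.a.s.\ case are precisely the verifications hidden behind the paper's phrase ``factors through.''
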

\begin{proof}
It is clear, as in the classical case, that the above monomials span $\mathrm{SU}_G(\mathcal{L})$. Now, the map $\sigma:\mathbb{F}_G(X_\mathcal{B})\to\mathfrak{B}$ from \Cref{gradedlemma} factors through $\mathrm{SU}_G(\mathcal{L})\to\mathfrak{B}$. So, we obtain a surjective linear map $\mathrm{SU}_G(\mathcal{L})\to\mathfrak{B}$, where the above set of monomials is sent to a linearly independent set. Hence, it is a basis.
\end{proof}

As a consequence, we obtain that $(\mathcal{L},U_G(\mathcal{L}))$ and $(\mathcal{L},\mathrm{SU}_G(\mathcal{L}))$ are indeed pairs. That is, we have the following.
\begin{Cor}
Let $G$ be any group and $\mathcal{L}$ a $G$-graded Lie algebra. Then
\begin{enumerate}
\renewcommand{\labelenumi}{(\roman{enumi})}
\item there is an embedding $\mathcal{L}\hookrightarrow\mathrm{SU}_G(\mathcal{L})$,
\item there is an embedding $\mathcal{L}\hookrightarrow U_G(\mathcal{L})$.
\end{enumerate}
\end{Cor}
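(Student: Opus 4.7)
The plan is to derive both embeddings directly from the PBW basis given in \Cref{gradedPBW}, using the quotient relation from \Cref{quotient} to bootstrap (i) into (ii).

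For (i), I would observe that the natural graded linear map $\iota\colon\mathcal{L}\to\mathrm{SU}_G(\mathcal{L})$ sends each basis element $e_i\in\mathcal{B}$ to the class of the variable $z_i$ in $\mathbb{F}_G(X_\mathcal{B})/J_\mathcal{B}$. Since the length-one monomials (with $m=1$) trivially satisfy the abelian-subgroup condition---a singleton generates a cyclic, hence abelian, subgroup of $G$---the elements $\{e_i\mid i\in N\}$ appear in the PBW basis of $\mathrm{SU}_G(\mathcal{L})$ exhibited by \Cref{gradedPBW}. Therefore they are linearly independent in $\mathrm{SU}_G(\mathcal{L})$, and $\iota$ is injective.

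For (ii), I would invoke \Cref{quotient}: there is a surjective $G$-graded homomorphism $\pi\colon U_G(\mathcal{L})\to\mathrm{SU}_G(\mathcal{L})$ that restricts to the identity on $\mathcal{L}$. Writing $\iota_U\colon\mathcal{L}\to U_G(\mathcal{L})$ and $\iota_S\colon\mathcal{L}\to\mathrm{SU}_G(\mathcal{L})$ for the natural maps, we have $\pi\circ\iota_U=\iota_S$. Part (i) shows $\iota_S$ is injective, so $\iota_U$ must be injective as well.

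There is no real obstacle here beyond unpacking the identifications; the work has already been done in \Cref{gradedPBW}. The only mild care needed is to verify that the PBW basis indeed contains all the singletons $e_i$ (which is immediate from the stated conditions $m\ge 0$ and the trivial abelianness of one-element subsets), and to confirm that the quotient map of \Cref{quotient} intertwines the two canonical maps from $\mathcal{L}$---both of which are straightforward from the constructions of $U_G(\mathcal{L})$ and $\mathrm{SU}_G(\mathcal{L})$.
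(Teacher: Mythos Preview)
Your proposal is correct and follows essentially the same approach as the paper: part (i) is deduced directly from the PBW basis of \Cref{gradedPBW} (the length-one monomials are in the basis), and part (ii) is obtained from (i) via the factorization $\mathcal{L}\to U_G(\mathcal{L})\to\mathrm{SU}_G(\mathcal{L})$ supplied by \Cref{quotient}. Your write-up simply makes explicit what the paper leaves terse.
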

\begin{proof}
The assertion (i) follows directly from the previous theorem. To prove (ii), one should consider the diagram

\tikzset{every picture/.style={line width=0.75pt}} 

\begin{tikzpicture}[x=0.75pt,y=0.75pt,yscale=-1,xscale=1]
\path (0,45); 

\draw (721,21) node    {$0$};
\draw (701,71) node    {$0$};
\draw (170,46) node    {$\mathcal{L}$};
\draw (290,47) node    {$U_{G}(\mathcal{L})$};
\draw (261,120) node [anchor=north west][inner sep=0.75pt]   [align=left] {$\displaystyle SU_{G}(\mathcal{L})$};
\draw    (180,46.08) -- (260.5,46.75) ;
\draw [shift={(262.5,46.77)}, rotate = 180.48] [color={rgb, 255:red, 0; green, 0; blue, 0 }  ][line width=0.75]    (10.93,-3.29) .. controls (6.95,-1.4) and (3.31,-0.3) .. (0,0) .. controls (3.31,0.3) and (6.95,1.4) .. (10.93,3.29)   ;
\draw    (290,60) -- (290,114) ;
\draw [shift={(290,116)}, rotate = 270] [color={rgb, 255:red, 0; green, 0; blue, 0 }  ][line width=0.75]    (10.93,-3.29) .. controls (6.95,-1.4) and (3.31,-0.3) .. (0,0) .. controls (3.31,0.3) and (6.95,1.4) .. (10.93,3.29)   ;
\draw    (180,52.92) -- (269.56,114.86) ;
\draw [shift={(271.2,116)}, rotate = 214.67] [color={rgb, 255:red, 0; green, 0; blue, 0 }  ][line width=0.75]    (10.93,-3.29) .. controls (6.95,-1.4) and (3.31,-0.3) .. (0,0) .. controls (3.31,0.3) and (6.95,1.4) .. (10.93,3.29)   ;

\end{tikzpicture}

where the surjective map $U_G(\mathcal{L})\to\mathrm{SU}_G(\mathcal{L})$ is from \Cref{quotient}.
\end{proof}

\section{Free graded Lie algebra}
There is a direct way to construct the $G$-graded free Lie algebra. It was mentioned, for instance, in \cite{Gord}. We include its construction here for the sake of completeness. Let $\mathbb{F}\{X_G\}$ be the absolutely free (nonassociative) $G$-graded algebra, and let $I$ be the graded T-ideal generated by all elements of the following types:
\begin{align*}
&x_1^{(g_1)}x_2^{(g_2)}+x_2^{(g_2)}x_1^{(g_1)},\\
&(x_1^{(g_1)}x_2^{(g_2)})x_3^{(g_3)}+(x_2^{(g_2)}x_3^{(g_3)})x_1^{(g_1)}+(x_3^{(g_3)}x_1^{(g_1)})x_2^{(g_2)}.
\end{align*}
Denote $\mathcal{L}(X_G)=\mathbb{F}\{X_G\}/I$. By construction, $\mathcal{L}(X_G)$ is a $G$-graded algebra, and it is clearly a Lie algebra.
\begin{Prop}
$\mathcal{L}(X_G)$ is free in the class of all $G$-graded Lie algebras, and it is freely generated by the set $X_G$.
\end{Prop}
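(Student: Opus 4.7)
The plan is to verify the universal property directly. Given any $G$-graded Lie algebra $\mathcal{H}$ and any degree-preserving set map $\psi_0\colon X_G\to\mathcal{H}$, I need to produce a unique $G$-graded Lie homomorphism $\bar\psi\colon\mathcal{L}(X_G)\to\mathcal{H}$ extending $\psi_0$ along the canonical map $X_G\hookrightarrow\mathcal{L}(X_G)$.

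First I would invoke the universal property of the absolutely free $G$-graded nonassociative algebra $\mathbb{F}\{X_G\}$: viewing $\mathcal{H}$ as a $G$-graded nonassociative algebra under the Lie bracket, the degree-respecting map $\psi_0$ extends uniquely to a $G$-graded (nonassociative) algebra homomorphism $\psi\colon\mathbb{F}\{X_G\}\to\mathcal{H}$. Second, I would show that $I\subseteq\ker\psi$. Because $I$ is the graded T-ideal generated by the anticommutativity and Jacobi expressions, it is closed under graded endomorphisms of $\mathbb{F}\{X_G\}$; equivalently, $I$ contains every element of the forms $ab+ba$ and $(ab)c+(bc)a+(ca)b$ where $a,b,c$ are homogeneous elements of $\mathbb{F}\{X_G\}$. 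Since $\mathcal{H}$ is a Lie algebra, it satisfies anticommutativity and the Jacobi identity identically, so $\psi$ sends each generator (and hence each element) of $I$ to $0$. Therefore $\psi$ factors through the quotient, yielding a $G$-graded algebra homomorphism $\bar\psi\colon\mathcal{L}(X_G)\to\mathcal{H}$. As $\mathcal{L}(X_G)$ is itself a Lie algebra (the quotient by $I$ enforces anticommutativity and Jacobi), $\bar\psi$ is automatically a homomorphism of Lie algebras.

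For uniqueness, any two extensions of $\psi_0$ to Lie homomorphisms $\mathcal{L}(X_G)\to\mathcal{H}$ must agree on $X_G$ and hence on the Lie subalgebra generated by $X_G$; since $X_G$ generates $\mathcal{L}(X_G)$ (this is clear from the construction, as $\mathbb{F}\{X_G\}$ is generated by $X_G$ and the quotient map is surjective), the two extensions coincide.

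The only delicate point is the distinction between an ordinary graded ideal and a graded T-ideal: the anticommutativity and Jacobi relations must hold not only on the chosen variables but on all of $\mathcal{L}(X_G)$, so one genuinely needs the T-ideal closure in the definition of $I$. With that reading the inclusion $I\subseteq\ker\psi$ is immediate, so I do not anticipate any real obstacle in the proof.
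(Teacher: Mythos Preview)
Your proposal is correct and follows essentially the same route as the paper: extend the set map to a $G$-graded homomorphism out of the absolutely free algebra $\mathbb{F}\{X_G\}$, observe that the kernel contains $I$ because $\mathcal{H}$ is Lie, and factor through the quotient. You are slightly more careful than the paper in explaining why the full T-ideal $I$ (not just the displayed generators) lies in $\ker\psi$, and you add the uniqueness argument, which the paper leaves implicit.
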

\begin{proof}
Let $\mathcal{H}$ be a $G$-graded Lie algebra, and let $\varphi\colon X_G\to\mathcal{H}$ be a degree-preserving map, that is, $\deg\varphi(x_i^{(g)})=g$. Then $\varphi$ admits an extension to a $G$-graded homomorphism $\varphi\colon \mathbb{F}\{X_G\}\to\mathcal{H}$. Then $\mathrm{Ker}\,\varphi$ is a graded ideal; and, since $\mathcal{H}$ is a graded Lie algebra, it contains the generators of $I$. Thus, $\varphi$ factors through $\mathbb{F}\{X_G\}/I$, that is, $\varphi$ induces a graded Lie homomorphism $\mathcal{L}(X_G)\to\mathcal{H}$ that extends the map $X_G\to\mathcal{H}$.
\end{proof}

\subsection{Free graded algebras and pairs} 
We fix a group $G$. As a consequence of \Cref{weakidentities}, we may consider the variety $\mathscr{W}_G$ of pairs satisfying the weak $G$-graded polynomial identities
\[
x_1^{(g)}x_2^{(h)}=0,\quad [g,h]\ne1.
\]
Let $\mathcal{L}(X_G)$ be the free $G$-graded Lie algebra, and let  $U_G(\mathcal{L}(X_G))$ be its respective $G$-graded universal enveloping algebra.
\begin{Prop}\label{prop1}
The pair $(\mathcal{L}(X_G),U_G(\mathcal{L}(X_G)))$ is free in the variety of pairs $\mathscr{W}_G$, and $X_G$ is a set of free generators.
\end{Prop}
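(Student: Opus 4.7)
The plan is to verify directly the required universal property of the pair $(\mathcal{L}(X_G), U_G(\mathcal{L}(X_G)))$: for any pair $(\mathcal{H}, \mathcal{A}) \in \mathscr{W}_G$ and any degree-preserving map $\psi_0 \colon X_G \to \mathcal{H}$, I must produce a unique homomorphism of pairs $(\mathcal{L}(X_G), U_G(\mathcal{L}(X_G))) \to (\mathcal{H}, \mathcal{A})$ extending $\psi_0$. The argument proceeds in two stages, factoring through the two universal properties already established.

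First, since $\mathcal{L}(X_G)$ is the free $G$-graded Lie algebra on $X_G$ (the Proposition just above), the map $\psi_0 \colon X_G \to \mathcal{H}$ extends uniquely to a $G$-graded Lie homomorphism $\varphi \colon \mathcal{L}(X_G) \to \mathcal{H}$. Next, observe that the condition defining $\mathscr{W}_G$ is precisely the condition isolated in \Cref{weakidentities}: a pair $(\mathcal{H}, \mathcal{A})$ lies in $\mathscr{W}_G$ if and only if it is a $G$-pair in the sense of the paper. Hence the universal property of $U_G(\mathcal{L}(X_G))$ (\Cref{gr_universal}) applied to the $G$-graded Lie homomorphism $\varphi$ and the $G$-pair $(\mathcal{H}, \mathcal{A})$ produces a $G$-graded algebra homomorphism $\Phi \colon U_G(\mathcal{L}(X_G)) \to \mathcal{A}$ extending $\varphi$. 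By construction $\Phi(\mathcal{L}(X_G)) = \varphi(\mathcal{L}(X_G)) \subseteq \mathcal{H}$, so $\Phi$ is in fact a homomorphism of pairs, and it obviously extends $\psi_0$.

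For uniqueness, I note that $X_G$ generates $\mathcal{L}(X_G)$ as a Lie algebra (freeness), and by the definition of the graded universal enveloping algebra, $\mathcal{L}(X_G)$ generates $U_G(\mathcal{L}(X_G))$ as an associative algebra. Consequently $X_G$ generates $U_G(\mathcal{L}(X_G))$ as an associative algebra, and any two pair homomorphisms from $(\mathcal{L}(X_G), U_G(\mathcal{L}(X_G)))$ to $(\mathcal{H}, \mathcal{A})$ that agree on $X_G$ must coincide.

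The only genuine point to check carefully is the identification of the variety $\mathscr{W}_G$ with the class of $G$-pairs; once that is in hand, the rest is a standard two-step composition of universal properties. I do not expect any technical obstacle beyond making this identification explicit.
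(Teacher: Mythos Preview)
Your proof is correct and follows essentially the same approach as the paper: extend $\psi_0$ to a graded Lie homomorphism $\mathcal{L}(X_G)\to\mathcal{H}$ by freeness of $\mathcal{L}(X_G)$, then extend to $U_G(\mathcal{L}(X_G))\to\mathcal{A}$ by the universal property of the graded enveloping algebra. The paper's version is terser---it simply starts with a $G$-pair without spelling out the identification of $\mathscr{W}_G$ with the class of $G$-pairs, and it omits the explicit verification of the pair-homomorphism condition and uniqueness---but your added details are all correct and do not change the route.
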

\begin{proof}
Let $(\mathcal{H},\mathcal{A})$ be a $G$-pair, and let $\psi_0\colon X_G\to \mathcal{H}$ be a degree-preserving map. Since $\mathcal{L}(X_G)$ is free, then $\psi_0$ admits a unique extension to a Lie homomorphism $\mathcal{L}(X_G)\to\mathcal{H}$. So, it admits a unique extension to an algebra homomorphism $U_G(\mathcal{L}(X_G))\to\mathcal{A}$. Thus, $(\mathcal{L}(X_G),U_G(\mathcal{L}(X_G)))$ is free in the variety of pairs $\mathscr{W}_G$, freely generated by $X_G$.
\end{proof}

Conversely, we may obtain the free $G$-graded Lie algebra from a free pair in $\mathscr{W}_G$. So, let $(\mathcal{L},\mathcal{F})$ be a free pair in the variety of $G$-graded pairs in $\mathscr{W}_G$, freely generated by $X$. We let $\mathcal{L}(X)$ be the $G$-graded Lie algebra generated by $X$. Then, $\mathcal{L}(X)=\mathcal{L}$. More precisely, we have:
\begin{Prop}\label{prop2}
The algebra $\mathcal{L}(X)$ is the free $G$-graded Lie algebra, freely generated by $X$, and the $G$-pair $(\mathcal{L}(X),U_G(\mathcal{L}(X)))$ is free in the variety of pairs $\mathscr{W}_G$, and $X$ is a set of free generators. 
\end{Prop}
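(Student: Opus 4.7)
The plan is to prove the proposition in three stages: (i) verify the preceding claim that $\mathcal{L}(X)=\mathcal{L}$; (ii) show $\mathcal{L}(X)$ satisfies the universal property of the free $G$-graded Lie algebra on $X$; and (iii) invoke \Cref{prop1} to deduce the statement about the pair.

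For (i), I would apply the universal property of the free pair $(\mathcal{L},\mathcal{F})$ to the sub-pair $(\mathcal{L}(X), \mathrm{alg}(X))$ of $(\mathcal{L},\mathcal{F})$, where $\mathrm{alg}(X)$ denotes the associative subalgebra of $\mathcal{F}$ generated by $X$. Being a sub-pair of a member of $\mathscr{W}_G$, it itself lies in $\mathscr{W}_G$. The inclusion $X \hookrightarrow \mathcal{L}(X)$ is degree-preserving, so by freeness it extends to a pair homomorphism $\rho\colon(\mathcal{L},\mathcal{F}) \to (\mathcal{L}(X), \mathrm{alg}(X))$; composing $\rho$ with the tautological inclusion back into $(\mathcal{L},\mathcal{F})$ gives a pair endomorphism of $(\mathcal{L},\mathcal{F})$ fixing $X$ pointwise. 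Uniqueness in the universal property forces this composition to be the identity, so $\mathcal{L}(X) = \mathcal{L}$ (and incidentally $\mathrm{alg}(X) = \mathcal{F}$).

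For (ii), I would take any $G$-graded Lie algebra $\mathcal{H}$ together with a degree-preserving map $\psi_0\colon X\to\mathcal{H}$. Using the embedding $\mathcal{H} \hookrightarrow \mathrm{SU}_G(\mathcal{H})$ from the corollary to \Cref{gradedPBW}, the pair $(\mathcal{H}, \mathrm{SU}_G(\mathcal{H}))$ is a (strong) $G$-pair and hence lies in $\mathscr{W}_G$ by \Cref{weakidentities}. The universal property of $(\mathcal{L},\mathcal{F})$ applied to $\psi_0$ (composed with that embedding) yields a pair homomorphism $(\mathcal{L},\mathcal{F}) \to (\mathcal{H},\mathrm{SU}_G(\mathcal{H}))$, whose restriction $\mathcal{L}(X) = \mathcal{L} \to \mathcal{H}$ lands in $\mathcal{H}$ and is the desired Lie extension of $\psi_0$. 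Uniqueness of the Lie extension is automatic since $\mathcal{L}(X)$ is generated by $X$ as a Lie algebra. Step (iii) is then immediate: \Cref{prop1} applied to the free $G$-graded Lie algebra $\mathcal{L}(X)$ (which we have just produced in (ii)) gives freeness of $(\mathcal{L}(X), U_G(\mathcal{L}(X)))$ in $\mathscr{W}_G$, with $X$ as a free generating set.

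The main obstacle is step (ii): to convert a map from $X$ into a mere $G$-graded Lie algebra $\mathcal{H}$ into an honest pair homomorphism out of $(\mathcal{L},\mathcal{F})$, one needs $\mathcal{H}$ realized as the Lie part of some $G$-pair in $\mathscr{W}_G$. This is exactly what the embedding $\mathcal{H}\hookrightarrow\mathrm{SU}_G(\mathcal{H})$ supplies, and it is only available thanks to the nontrivial graded PBW theorem. Step (i) would also be delicate if $X$ were not known to consist of homogeneous symbols, but the grading on $X$ is built into the definition of a free pair, so no further work is required there.
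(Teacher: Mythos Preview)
Your proof is correct and follows essentially the same route as the paper: embed an arbitrary $G$-graded Lie algebra $\mathcal{H}$ into a graded enveloping algebra to promote it to a $G$-pair in $\mathscr{W}_G$, then invoke freeness of $(\mathcal{L},\mathcal{F})$ to produce the required Lie extension. The only cosmetic differences are that the paper uses $U_G(\mathcal{H})$ rather than $\mathrm{SU}_G(\mathcal{H})$ for the embedding (both come from the same corollary to \Cref{gradedPBW}), and it re-derives the pair freeness directly via the universal property of $U_G(\mathcal{L}(X))$ instead of citing \Cref{prop1}; your additional step (i) establishing $\mathcal{L}(X)=\mathcal{L}$ is a nice explicit justification of the claim the paper states without proof just before the proposition.
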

\begin{proof}
Let $(\mathcal{H},\mathcal{A})$ be a $G$-pair, and let $\psi_0\colon X\to \mathcal{H}$ be a degree-preserving map. Since the pair belongs to $\mathscr{W}_G$ (\Cref{weakidentities}) and $(\mathcal{L},\mathcal{F})$ is free, there exists an extension of $\psi_0$ to a $G$-graded algebra homomorphism $\psi\colon \mathcal{F}\to\mathcal{A}$, which restricts to a Lie homomorphism $\bar{\psi}\colon \mathcal{L}\to\mathcal{H}$. In particular, $\psi$ restricts to a Lie homomorphism from $\mathcal{L}(X)\to\mathcal{H}$. Every $G$-graded Lie algebra may be thought of as the pair given by itself and its $G$-graded universal enveloping algebra, we get that $\mathcal{L}(X)$ is a free $G$-graded Lie algebra. Now $\bar{\psi}$ extends uniquely to a homomorphism $U_G(\mathcal{L}(X))\to\mathcal{A}$. Hence $(\mathcal{L}(X),U_G(\mathcal{L}(X)))$ is a free pair in $\mathscr{W}_G$, and it is freely generated by $X$.
\end{proof}

As a consequence of \Cref{prop1} and \Cref{prop2}, we obtain an alternative description of the free $G$-graded Lie algebra, and a relation with the free $G$-pair in the variety $\mathscr{W}_G$. This is a graded version of Witt's Theorem (see, for instance, \cite[Theorem 1.3.5]{Drenskybook} or \cite[Theorem V.7]{Jac1979} for the classical case).
\begin{Cor}\label{weakwitt}
Let $(\mathcal{L},\mathcal{F})$ be free in the variety of pairs $\mathscr{W}_G$, freely generated by $X$. Then $\mathcal{L}$ coincides with the Lie algebra $\mathcal{L}(X)$, generated by $X$, and it is the free $G$-graded Lie algebra, freely generated by $X$, and $\mathcal{F}=U_G(\mathcal{L})$.\qed
\end{Cor}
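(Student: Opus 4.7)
The plan is to read the corollary as a direct summary of \Cref{prop1} and \Cref{prop2}, combined with the standard uniqueness of free objects in a variety. By \Cref{prop1} a free pair in $\mathscr{W}_G$ on the generating set $X$ does exist, namely $(\mathcal{L}(X),U_G(\mathcal{L}(X)))$. Given now an abstract free pair $(\mathcal{L},\mathcal{F})$ in $\mathscr{W}_G$ freely generated by $X$, form the $G$-graded Lie subalgebra $\mathcal{L}(X)\subseteq\mathcal{L}\subseteq\mathcal{F}$ generated by $X$. By \Cref{prop2}, this $\mathcal{L}(X)$ is the free $G$-graded Lie algebra on $X$, and the pair $(\mathcal{L}(X),U_G(\mathcal{L}(X)))$ is itself free in $\mathscr{W}_G$ on $X$.

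At this stage I would invoke the uniqueness of free objects: since both $(\mathcal{L},\mathcal{F})$ and $(\mathcal{L}(X),U_G(\mathcal{L}(X)))$ are free pairs in $\mathscr{W}_G$ with free generating set $X$, the standard trick of extending the identity map on $X$ in both directions, composing, and using uniqueness of extensions produces a mutually inverse pair of graded homomorphisms, hence an isomorphism of pairs $\Phi\colon(\mathcal{L}(X),U_G(\mathcal{L}(X)))\to(\mathcal{L},\mathcal{F})$ that is the identity on $X$. To identify $\Phi$ concretely, note that the inclusion $\mathcal{L}(X)\hookrightarrow\mathcal{L}\hookrightarrow\mathcal{F}$ is itself a pair morphism from $(\mathcal{L}(X),U_G(\mathcal{L}(X)))$ into $(\mathcal{L},\mathcal{F})$ that is the identity on $X$; by the uniqueness clause in the universal property of $(\mathcal{L}(X),U_G(\mathcal{L}(X)))$, this inclusion must coincide with $\Phi$. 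Therefore $\Phi$ is surjective as an inclusion, forcing $\mathcal{L}(X)=\mathcal{L}$ inside $\mathcal{F}$ and $U_G(\mathcal{L}(X))=\mathcal{F}$, i.e.\ $\mathcal{F}=U_G(\mathcal{L})$.

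The only mildly delicate point—really the only obstacle worth naming—is upgrading the abstract isomorphism of free pairs to an honest equality of subsets inside $\mathcal{F}$; that is what the uniqueness clause of the universal property (applied to the literal inclusion map) is there for. Everything else is formal categorical bookkeeping on top of \Cref{prop1} and \Cref{prop2}, so no further computation is required.
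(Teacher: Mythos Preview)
Your argument is correct and follows exactly the route the paper intends: the corollary is marked with \qed\ precisely because it is meant to be read as the straightforward combination of \Cref{prop1} and \Cref{prop2} together with the standard uniqueness of free objects, which you have spelled out in full. The only small wording issue is that what you call ``the inclusion'' is not literally a pair morphism until you extend the Lie inclusion $\mathcal{L}(X)\hookrightarrow\mathcal{F}$ to an algebra map $U_G(\mathcal{L}(X))\to\mathcal{F}$ via the universal property of $U_G$; once that is said, the uniqueness clause applies exactly as you describe.
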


\subsection{Strong Witt's Theorem} Now, we obtain an outright graded version of Witt's Theorem. Let $\mathbb{F}_G(X_G)$ be the relatively free $G$-graded associative algebra in the variety $\mathscr{V}_G$, and let $\mathcal{L}(X_G)$ be its Lie subalgebra generated by $X_G$. Then we have
\begin{Thm}\label{strongwitt}
The algebra $\mathcal{L}(X_G)$ is a free $G$-graded Lie algebra, freely generated by $X_G$. Moreover, $\mathrm{SU}_G(\mathcal{L}(X_G))=\mathbb{F}_G(X_G)$.
\end{Thm}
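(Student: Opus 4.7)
The plan is to prove both assertions in parallel, leveraging the fact that strong $G$-pairs are precisely those pairs $(\mathcal{L},\mathcal{A})$ with $\mathcal{A}\in\mathscr{V}_G$ (this is how $\mathscr{V}_G$ was characterised right after \Cref{lem1}). The structure will mirror \Cref{prop2}, but with the free algebra $\mathbb{F}\langle X_G\rangle$ of graded associative algebras replaced everywhere by the relatively free algebra $\mathbb{F}_G(X_G)$ in $\mathscr{V}_G$, and $U_G$ replaced by $\mathrm{SU}_G$.

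First I would prove freeness of $\mathcal{L}(X_G)$ directly, as follows. Let $\mathcal{H}$ be any $G$-graded Lie algebra and $\psi_0\colon X_G\to\mathcal{H}$ a degree-preserving map. Form the strong $G$-graded universal enveloping algebra $\mathrm{SU}_G(\mathcal{H})$ (which exists by \Cref{strong_gr_universal}); since $(\mathcal{H},\mathrm{SU}_G(\mathcal{H}))$ is a strong pair, $\mathrm{SU}_G(\mathcal{H})\in\mathscr{V}_G$. The composition $X_G\xrightarrow{\psi_0}\mathcal{H}\hookrightarrow\mathrm{SU}_G(\mathcal{H})$ is degree-preserving into an algebra in $\mathscr{V}_G$, so by the universal property of $\mathbb{F}_G(X_G)$ it extends to a unique $G$-graded associative homomorphism $\Psi\colon\mathbb{F}_G(X_G)\to\mathrm{SU}_G(\mathcal{H})$. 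Being an associative homomorphism, $\Psi$ is a Lie homomorphism with respect to commutators; hence it carries $\mathcal{L}(X_G)$, the Lie subalgebra generated by $X_G$, into the Lie subalgebra of $\mathrm{SU}_G(\mathcal{H})$ generated by $\psi_0(X_G)$, which lies inside $\mathcal{H}$. Thus $\Psi|_{\mathcal{L}(X_G)}$ gives a graded Lie homomorphism $\mathcal{L}(X_G)\to\mathcal{H}$ extending $\psi_0$, and uniqueness is automatic since $X_G$ generates $\mathcal{L}(X_G)$ as a Lie algebra.

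Next I would verify that $(\mathcal{L}(X_G),\mathbb{F}_G(X_G))$ satisfies the defining universal property of $\mathrm{SU}_G(\mathcal{L}(X_G))$. It is a strong pair because $\mathbb{F}_G(X_G)\in\mathscr{V}_G$, and $\mathcal{L}(X_G)$ generates $\mathbb{F}_G(X_G)$ as an associative algebra (since $X_G$ does). Given a strong pair $(\mathcal{H},\mathcal{A})$ and a $G$-graded Lie homomorphism $\varphi\colon\mathcal{L}(X_G)\to\mathcal{H}$, the composition $X_G\to\mathcal{L}(X_G)\xrightarrow{\varphi}\mathcal{H}\hookrightarrow\mathcal{A}$ is degree-preserving into $\mathcal{A}\in\mathscr{V}_G$, so it extends to a unique graded associative homomorphism $\widetilde{\varphi}\colon\mathbb{F}_G(X_G)\to\mathcal{A}$. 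Both $\widetilde{\varphi}|_{\mathcal{L}(X_G)}$ and $\varphi$ are graded Lie homomorphisms $\mathcal{L}(X_G)\to\mathcal{A}^{(-)}$ (using that $(\mathcal{H},\mathcal{A})$ is strong, so $\mathcal{A}^{(-)}$ is $G$-graded) agreeing on the free generating set $X_G$, and by the freeness established in the previous step they must coincide; in particular $\widetilde{\varphi}(\mathcal{L}(X_G))\subseteq\mathcal{H}$.

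The only subtle point is making sure the two universal properties mesh correctly: freeness of $\mathcal{L}(X_G)$ is needed to deduce uniqueness of the extension on the Lie side, and conversely the existence of $\mathrm{SU}_G$ for arbitrary $G$-graded Lie algebras is what lets us promote a Lie-target problem to an associative-target problem inside $\mathscr{V}_G$. Once these are arranged in the right order, nothing computational remains. The main conceptual obstacle is not getting caught in a circular argument: one must first prove freeness of $\mathcal{L}(X_G)$ using $\mathrm{SU}_G$ of the \emph{target} $\mathcal{H}$, and only afterwards use that freeness to identify $\mathrm{SU}_G(\mathcal{L}(X_G))$ with $\mathbb{F}_G(X_G)$.
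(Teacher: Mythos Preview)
Your proposal is correct and follows essentially the same approach as the paper's own proof: first establish freeness of $\mathcal{L}(X_G)$ by routing an arbitrary degree-preserving map $X_G\to\mathcal{H}$ through $\mathrm{SU}_G(\mathcal{H})\in\mathscr{V}_G$ and restricting the resulting associative homomorphism, then verify the universal property of $\mathrm{SU}_G$ for the pair $(\mathcal{L}(X_G),\mathbb{F}_G(X_G))$ by extending $\varphi|_{X_G}$ and invoking the freeness just proved to identify the restriction with $\varphi$. Your write-up is in fact slightly more careful than the paper's about why the restriction lands in $\mathcal{H}$ and about avoiding circularity, but the argument is the same.
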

\begin{proof}
We let $\mathcal{H}$ be a $G$-graded Lie algebra and $f_0:X_G\to\mathcal{H}$ a map respecting degrees. Now, since $\mathcal{H}\subseteq\mathrm{SU}_G(\mathcal{L})$, the map $f_0$ extends to a $G$-graded algebra homomorphism $f:\mathbb{F}_G(X_G)\to\mathrm{SU}_G(\mathcal{H})$. Such maps restricts to a $G$-graded Lie homomorphism $\mathcal{L}(X_G)\to\mathrm{Lie}(\mathcal{H})=\mathcal{H}$. So, $\mathcal{L}(X_G)$ is free, freely generated by $X_G$.

Now, let $(\mathcal{H},\mathcal{A})$ be a strong pair, and $f:\mathcal{L}(X_G)\to\mathcal{H}$ a $G$-graded Lie homomorphism. Then, $f$ restricts to a graded map $X_G\to\mathcal{H}$, which admits an extension to a $G$-graded algebra homomorphism $\bar{f}:\mathbb{F}_G(X_G)\to\mathcal{A}$. Since the restriction of $\bar{f}$ to $\mathcal{L}(X_G)$ is a $G$-graded Lie homomorphism, and $f$ is the unique extension of the map $X_G\to\mathcal{H}$, we get that the restriction of $\bar{f}$ coincides with $f$. It means that $\bar{f}$ is an extension of $f$. So, $\mathrm{SU}_G(\mathcal{L}(X_G))=\mathbb{F}_G(X_G)$.
\end{proof}

Combining \Cref{weakwitt} and \Cref{strongwitt} we obtain a complete version of the graded version of Witt's Theorem. We summarize the results, recalling the main definitions.
\begin{Cor}\label{completewitt}
Let $G$ be a group, and consider the set of $G$-graded polynomials
\begin{equation}\label{setwitt}
x_1^{(g)}x_2^{(h)},\quad [g,h]\ne1,\,g,h\in G.
\end{equation}
Let $\mathscr{V}_G$ be the variety of $G$-graded associative algebras satisfying the identities \eqref{setwitt}, and $\mathscr{W}_G$ be the variety of $G$-graded associative-Lie pairs satisfying the weak-polynomial identities \eqref{setwitt}. Let $X_G$ be a set of $G$-graded variables, $\mathbb{F}_G(X_G)$ be the relatively free algebra in $\mathscr{V}_G$, freely generated by $X_G$, and $(\mathcal{L},\mathscr{F})$ be a free pair in $\mathscr{W}_G$, freely generated by $X_G$. Then:
\begin{enumerate}
\renewcommand{\labelenumi}{(\roman{enumi})}
\setlength{\itemsep}{.5em}
\item The Lie subalgebra $\mathcal{L}_G(X_G)$ of $\mathbb{F}_G(X_G)$ generated by $X_G$, is the free $G$-graded Lie algebra, freely generated by $X_G$. Furthermore, $\mathrm{SU}_G(\mathcal{L}_G(X_G))=\mathbb{F}_G(X_G)$.
\item The Lie subalgebra of $\mathcal{F}$ generated by $X_G$ coincides with $\mathcal{L}$, and it is the free $G$-graded Lie algebra, freely generated by $X_G$. Moreover, $U_G(\mathcal{L})=\mathcal{F}$.\qed
\end{enumerate}
\end{Cor}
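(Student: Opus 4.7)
The plan is essentially clerical: this corollary is a summary, repackaging two results already proved above, so my approach will be to cite \Cref{strongwitt} for part (i) and \Cref{weakwitt} for part (ii), and then verify only that the notation in the corollary matches the notation in those earlier statements. No new mathematical content needs to be introduced beyond reconciling symbols, which is why the statement itself ends with \qed rather than being followed by a dedicated proof block.

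For part (i), I would invoke \Cref{strongwitt} directly. That theorem asserts that the Lie subalgebra of $\mathbb{F}_G(X_G)$ generated by $X_G$, which was denoted there by $\mathcal{L}(X_G)$, is a free $G$-graded Lie algebra on $X_G$, and that $\mathrm{SU}_G(\mathcal{L}(X_G)) = \mathbb{F}_G(X_G)$. Renaming $\mathcal{L}(X_G)$ as $\mathcal{L}_G(X_G)$ in the notation of the present corollary produces exactly the two assertions of (i).

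For part (ii), I would invoke \Cref{weakwitt}. That corollary takes a free pair $(\mathcal{L},\mathcal{F})$ in $\mathscr{W}_G$, freely generated by a set $X$, and concludes that the Lie subalgebra of $\mathcal{F}$ generated by $X$ coincides with $\mathcal{L}$, is the free $G$-graded Lie algebra on $X$, and satisfies $\mathcal{F} = U_G(\mathcal{L})$. Specializing $X$ to the standard alphabet $X_G$ of $G$-graded variables yields precisely (ii), noting that both \Cref{prop1} and \Cref{prop2} ensure the free pair in $\mathscr{W}_G$ on $X_G$ exists and is unique up to isomorphism.

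The main (and only) obstacle is purely expository: one must check that the free $G$-graded Lie algebra mentioned in (i) (constructed inside the strong ambient $\mathbb{F}_G(X_G)$) and the free $G$-graded Lie algebra mentioned in (ii) (constructed abstractly or inside the free pair $\mathcal{F}$ in $\mathscr{W}_G$) agree in the sense of the universal property given in the section on the free graded Lie algebra. Since the universal property characterizes this object up to unique isomorphism, both constructions produce the same $\mathcal{L}(X_G)$, and the corollary is then an immediate assembly of \Cref{weakwitt} and \Cref{strongwitt}.
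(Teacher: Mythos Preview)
Your proposal is correct and matches the paper's approach exactly: the corollary is stated as an immediate combination of \Cref{weakwitt} and \Cref{strongwitt}, which is why it carries a \qed{} in place of a proof. The only thing you add beyond the paper is the explicit remark that the two realizations of the free $G$-graded Lie algebra agree by uniqueness of the universal property, which is harmless and indeed implicit in the paper's presentation.
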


\section{A graded version of Ado's Theorem}
In this section, we assume that $\mathrm{char}\,\mathbb{F}=0$.

If $\mathcal{L}$ is a $G$-graded Lie algebra, then it is known that its center is a graded ideal. Moreover, the following result due to Gordienko (also proved by Pagon, Repov\v s and Zaicev in \cite{PRZ}) is useful:
\begin{Thm}[{\cite[Corollary 4.2 and Corollary 4.3]{Gord16}}]\label{Gord}
Let $\mathcal{L}$ be a finite-dimensional $G$-graded Lie algebra over a field of characteristic zero, and $\mathcal{R}$ its solvable radical.
\begin{enumerate}
\item The nilradical and the solvable radical of $\mathcal{L}$ are graded ideals.
\item (Graded Levi's decomposition) There exists a graded subalgebra $\mathcal{L}_1$ such that $\mathcal{L}_1\cap\mathcal{R}=0$ and $\mathcal{L}=\mathcal{R}+\mathcal{L}_1$.
\end{enumerate}
\end{Thm}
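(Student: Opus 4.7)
The plan is to handle parts (1) and (2) separately, using a trace-form argument for the solvable radical and a mix of eigenspace and cohomological arguments for the remaining statements.

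For the solvable radical in part (1), the starting point is that the Killing form $\kappa$ of a $G$-graded Lie algebra is \emph{grading-compatible}: $\kappa(\mathcal{L}_g,\mathcal{L}_h)=0$ whenever $gh\ne 1$. Indeed, for $x\in\mathcal{L}_g$ and $y\in\mathcal{L}_h$ the endomorphism $\mathrm{ad}(x)\mathrm{ad}(y)$ sends $\mathcal{L}_k$ into $\mathcal{L}_{ghk}$, and a non-degree-preserving endomorphism of a finite-dimensional $G$-graded vector space has zero trace. A direct consequence is that the $\kappa$-orthogonal complement of any graded subspace is graded: decomposing $x=\sum_g x_g\in V^\perp$ and pairing against a homogeneous $v\in V_h$, only $\kappa(x_{h^{-1}},v)$ can contribute, so each $x_g\in V^\perp$ separately. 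Since in characteristic zero Cartan's criterion yields $\mathcal{R}=[\mathcal{L},\mathcal{L}]^{\perp_\kappa}$ and $[\mathcal{L},\mathcal{L}]$ is visibly graded, $\mathcal{R}$ is graded.

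For the nilradical in part (1), I would work inside the now-graded solvable Lie algebra $\mathcal{R}$ and use the characterization $\mathcal{N}=\{x\in\mathcal{R}\mid\mathrm{ad}_\mathcal{L}(x)\text{ nilpotent}\}$, valid in characteristic zero. Writing $x\in\mathcal{N}$ as $x=\sum x_g$ with each $x_g\in\mathcal{R}$, the task reduces to proving $\mathrm{ad}(x_g)$ nilpotent. When $g$ has infinite order in $G$, this is automatic: iterates $\mathrm{ad}(x_g)^n$ send $\mathcal{L}_k$ into $\mathcal{L}_{g^n k}$, and the finite support of $\mathcal{L}$ forces these components to vanish for $n$ large. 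When $g$ has finite order, one extends scalars to include enough roots of unity, decomposes $\mathcal{L}$ into eigenspaces of the induced $\langle g\rangle$-action via characters, and combines this decomposition with the Jordan decomposition of $\mathrm{ad}(x)$ to isolate and control the degree-$g$ summand.

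For part (2), I would begin from any classical (ungraded) Levi complement $\mathcal{L}_0$ of $\mathcal{R}$ produced by Levi's theorem, and aim to replace it with a graded one by establishing a graded Whitehead's second lemma, $H^2_{\mathrm{gr}}(\mathcal{L}/\mathcal{R},\mathcal{R})=0$ in characteristic zero. The standard Whitehead argument averages a $2$-cocycle using the Casimir of a faithful representation of the semisimple quotient $\mathcal{L}/\mathcal{R}$ (which is itself graded by virtue of (1)); since the Casimir is central in the enveloping algebra and degree-preserving for the induced grading, the averaging delivers a graded splitting of the extension $0\to\mathcal{R}\to\mathcal{L}\to\mathcal{L}/\mathcal{R}\to 0$, hence a graded Levi subalgebra $\mathcal{L}_1$.

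The principal obstacle is the finite-order case of the nilradical step: unlike $\mathcal{R}$, the nilradical is \emph{not} cut out as the null-space of any canonical invariant form on $\mathcal{L}$ (the example of upper-triangular matrices in $\mathfrak{gl}_3$, where $\mathcal{L}^{\perp_\kappa}$ strictly contains $\mathcal{N}$, illustrates this), so the Killing-form machinery cannot be applied directly. Making the eigenspace/Jordan argument rigorous --- in particular, justifying that the graded decomposition of a nilpotent $\mathrm{ad}$-operator is nilpotent component-wise, without invoking auxiliary graded representation-theoretic results that themselves depend on this theorem --- is the technical crux of the proof.
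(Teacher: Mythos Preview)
The paper does not prove this theorem; it is quoted from \cite{Gord16} (with \cite{PRZ} noted as an independent source) and used only as a black box in the graded Ado argument, so there is no in-paper proof to compare against.

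On your outline itself: the Killing-form argument for the solvable radical is correct and works for arbitrary $G$, since both the characteristic-zero formula $\mathcal{R}=[\mathcal{L},\mathcal{L}]^{\perp_\kappa}$ and the compatibility $\kappa(\mathcal{L}_g,\mathcal{L}_h)=0$ for $gh\neq1$ hold regardless of whether $G$ is abelian. The graded-Whitehead route to part~(2) is also a standard and workable strategy. The gap you flag in the nilradical step is genuine, and the patch you sketch for finite-order $g$ does not go through as written: speaking of ``eigenspaces of the induced $\langle g\rangle$-action via characters'' presumes an automorphism action of $\widehat{\langle g\rangle}$ on $\mathcal{L}$ whose eigenspaces isolate the component $x_g$, but for non-abelian $G$ the grading only produces an action of $\widehat{G^{\mathrm{ab}}}$, which cannot separate $\mathcal{L}_g$ from $\mathcal{L}_h$ when $g$ and $h$ have the same image in $G^{\mathrm{ab}}$. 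And since the operators $\mathrm{ad}(x_h)$ for the various homogeneous summands need not commute, a Jordan-decomposition argument alone will not recover nilpotency of $\mathrm{ad}(x_g)$ from nilpotency of their sum. The cited sources handle the nilradical by other means (in \cite{Gord16}, via a general co-stability of radicals under comodule-algebra structures) rather than by reduction to a cyclic group.
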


We shall follow the classical ungraded version of the proof of Ado's Theorem. The main idea is to find a graded ideal $I\subseteq U_G(\mathcal{L})$ of finite codimension such that $\mathcal{L}\cap I=0$.

\begin{Lemma}\label{goodideal}
Let $\mathcal{L}$ be a $G$-graded solvable Lie algebra, $\mathcal{R}$ its nilradical, and $I\subseteq U_G(\mathcal{L})$ a graded ideal of finite codimension. Assume that every $x\in\mathcal{R}$ is nilpotent modulo $I$. Then there exists a graded ideal $J\subseteq U_G(\mathcal{L})$ such that:
\begin{enumerate}
\item $J\subseteq I$,
\item $J$ has finite codimension in $U_G(\mathcal{L})$,
\item every $x\in\mathcal{R}$ is nilpotent modulo $J$,
\item every derivation $D$ of $\mathcal{L}$, that can be extended to $U_G(\mathcal{L})$, satisfies $DJ\subseteq J$.
\end{enumerate}
\end{Lemma}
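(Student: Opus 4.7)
The approach is to adapt the classical construction that Jacobson uses in the ungraded proof of Ado's theorem to the $G$-graded setting. Let $\mathfrak{D}$ denote the (finite-dimensional) space of derivations of $\mathcal{L}$ admitting an extension to a derivation of $U_G(\mathcal{L})$, and define $J$ to be the largest graded two-sided ideal of $U_G(\mathcal{L})$ contained in $I$ that is stable under every $D\in\mathfrak{D}$; such a $J$ exists as the sum of all ideals with these properties. With this definition, properties (1) and (4) hold by construction, as do the graded and two-sided ideal properties.

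For property (3), the key is that the nilradical $\mathcal{R}$ is characteristic: $D(\mathcal{R})\subseteq\mathcal{R}$ for every derivation $D$ of $\mathcal{L}$. Since every $x\in\mathcal{R}$ is nilpotent modulo $I$, Engel's theorem applied inside the finite-dimensional algebra $U_G(\mathcal{L})/I$ produces a uniform integer $m$ such that $\mathcal{R}^m\subseteq I$, where $\mathcal{R}^m$ denotes the linear span of all products of $m$ elements of $\mathcal{R}$. The two-sided ideal $(\mathcal{R}^m)$ of $U_G(\mathcal{L})$ is graded (since $\mathcal{R}$ is a graded ideal by \Cref{Gord}), contained in $I$, and $\mathfrak{D}$-stable by the Leibniz rule combined with $D(\mathcal{R})\subseteq\mathcal{R}$; hence $(\mathcal{R}^m)\subseteq J$, and in particular $x^m\in J$ for every $x\in\mathcal{R}$.

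The main obstacle is property (2), the finite codimension of $J$. The naive candidate $\bigcap_{k,\,D_i\in\mathfrak{D}}(D_1\cdots D_k)^{-1}(I)$ can fail to be finite-codimensional in explicit examples (for instance when $\mathcal{L}$ is a $2$-dimensional non-abelian solvable Lie algebra and $\mathfrak{D}$ contains a non-inner derivation), so a more delicate construction is required. My plan is to use the graded vector-space decomposition $\mathcal{L}=\mathcal{R}\oplus T$, available since $\mathcal{R}$ is a graded ideal by \Cref{Gord}, together with the PBW basis of \Cref{gradedPBW}, to control monomials of $U_G(\mathcal{L})$ according to their $\mathcal{R}$- and $T$-parts. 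The $\mathcal{R}$-part is already bounded by the inclusion $(\mathcal{R}^m)\subseteq J$. For the $T$-part, I exploit that $\mathcal{L}/\mathcal{R}$ is abelian (a consequence of solvability in characteristic zero, which gives $[\mathcal{L},\mathcal{L}]\subseteq\mathcal{R}$) and apply Lie's theorem to the finite-dimensional $\mathcal{L}$-module $V=U_G(\mathcal{L})/I$ (extending scalars to $\overline{\mathbb{F}}$ if necessary) to produce, for each homogeneous $t\in T$, a polynomial relation $p_t(t)\in I$. Taking these relations together with their (finite-dimensional) orbits under $\mathfrak{D}$ and combining with $(\mathcal{R}^m)$ produces a graded, $\mathfrak{D}$-stable ideal of finite codimension contained in $I$; by maximality this ideal is contained in $J$, which therefore also has finite codimension. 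Making the last passage (the closure under $\mathfrak{D}$ while remaining inside $I$) precise is the most delicate step.
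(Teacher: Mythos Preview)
Your candidate $J$---the largest $\mathfrak{D}$-stable graded ideal contained in $I$---does satisfy (1), (3), (4) as you argue, but your treatment of (2) is a genuine gap. The plan via Lie's theorem and relations $p_t(t)\in I$ does not address the real difficulty: applying derivations to those relations will in general push them outside $I$, and you offer no mechanism to prevent this (indeed, you flag exactly this as ``the most delicate step'' and leave it open).

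The idea you are missing, and which the paper exploits, is stronger than the characteristic property $D\mathcal{R}\subseteq\mathcal{R}$: in characteristic~$0$, for a solvable $\mathcal{L}$ with nilradical $\mathcal{R}$, \emph{every} derivation $D$ of $\mathcal{L}$ satisfies $D\mathcal{L}\subseteq\mathcal{R}$. With this, the paper builds $J$ explicitly rather than maximally. Let $I'$ be the ideal of $U_G(\mathcal{L})$ generated by $I$ and $\mathcal{R}$. The Leibniz rule together with $D\mathcal{L}\subseteq\mathcal{R}\subseteq I'$ gives $D\,U_G(\mathcal{L})\subseteq I'$, so $I'$ and hence every power $(I')^m$ is automatically $D$-stable. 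Your own Engel-type argument shows that $I'/I$ is a nilpotent ideal of the finite-dimensional algebra $U_G(\mathcal{L})/I$, so $(I')^m\subseteq I$ for some $m$; set $J=(I')^m$. Then (1), (3), (4) are immediate, and (2) follows because $I'\supseteq I$ has finite codimension and each quotient $(I')^k/(I')^{k+1}$ is a finitely generated bimodule over the finite-dimensional ring $U_G(\mathcal{L})/I'$. Incidentally, this explicit $(I')^m$ is $\mathfrak{D}$-stable, graded, and contained in $I$, hence sits inside your maximal $J$; so a posteriori your $J$ also has finite codimension---but the argument goes through $D\mathcal{L}\subseteq\mathcal{R}$, not through Lie's theorem.
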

\begin{proof}
Let $I'$ be the graded ideal of $U_G(\mathcal{L})$ generated by $I$ and $\mathcal{R}$. Then $\mathcal{L}/\mathcal{L}\cap I\cong(\mathcal{L}+I)/I$ is a $G$-graded Lie algebra, and $I'/I$ is one of its graded ideals. Since $\dim\mathcal{L}/\mathcal{L}\cap I<\infty$ and $I'/I$ contains a basis constituted of nilpotent elements, then $I'/I$ is nilpotent. Hence there exists $m\in\mathbb{N}$ such that
\[
J:=(I')^m\subseteq I.
\]
Clearly $J$ satisfies (1)--(3). On the other hand, since $\mathrm{char}\,\mathbb{F}=0$, it is known that $D\mathcal{L}\subseteq\mathcal{R}$, for any derivation $D$ of $\mathcal{L}$. Hence $DU_G(\mathcal{L})\subseteq I'$. Thus $DJ\subseteq J$.
\end{proof}

The following lemma is easy to deduce, and we include its statement for the sake of completeness.
\begin{Lemma}\label{innerdev}
Let $\mathcal{L}$ be a finite-dimensional $G$-graded Lie algebra. Then, $\mathrm{IDer}(\mathcal{L})$, the set of inner derivations of $\mathcal{L}$, is a $G$-graded Lie algebra.\qed
\end{Lemma}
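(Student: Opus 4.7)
The plan is to transport a grading along the canonical Lie algebra isomorphism $\mathrm{IDer}(\mathcal{L})\cong\mathcal{L}/Z(\mathcal{L})$. The adjoint map $\mathrm{ad}\colon\mathcal{L}\to\mathrm{Der}(\mathcal{L})$, $x\mapsto\mathrm{ad}_x=[x,\,\cdot\,]$, is a Lie homomorphism whose image is $\mathrm{IDer}(\mathcal{L})$ and whose kernel is the center $Z(\mathcal{L})$. As noted at the beginning of this section, the center of a $G$-graded Lie algebra is a $G$-graded ideal, so the quotient $\mathcal{L}/Z(\mathcal{L})$ inherits a natural $G$-grading, and the isomorphism above turns $\mathrm{IDer}(\mathcal{L})$ into a $G$-graded Lie algebra.

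To make the grading explicit, I would set $\mathrm{IDer}(\mathcal{L})_g:=\mathrm{ad}(\mathcal{L}_g)$ for each $g\in G$ and verify the three defining conditions of a grading. (i) Since $\mathcal{L}=\bigoplus_g\mathcal{L}_g$ and $\mathrm{ad}$ is linear and surjective onto $\mathrm{IDer}(\mathcal{L})$, the subspaces $\mathrm{IDer}(\mathcal{L})_g$ span $\mathrm{IDer}(\mathcal{L})$. (ii) If $\sum_g\mathrm{ad}_{x_g}=0$ with homogeneous $x_g\in\mathcal{L}_g$ and only finitely many nonzero, then evaluating at any homogeneous $y\in\mathcal{L}_h$ produces $\sum_g[x_g,y]=0$ with summands lying in pairwise distinct components $\mathcal{L}_{gh}$; the graded decomposition of $\mathcal{L}$ forces $[x_g,y]=0$ for every $g$ and every homogeneous $y$, whence each $x_g$ is central and $\mathrm{ad}_{x_g}=0$. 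Thus the sum is direct. (iii) Homogeneity of the bracket on $\mathcal{L}$ combined with the standard identity $[\mathrm{ad}_x,\mathrm{ad}_y]=\mathrm{ad}_{[x,y]}$ yields $[\mathrm{IDer}(\mathcal{L})_g,\mathrm{IDer}(\mathcal{L})_h]\subseteq\mathrm{IDer}(\mathcal{L})_{gh}$.

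There is no real obstacle here; the only subtle point is the directness in (ii), which is essentially the observation that $Z(\mathcal{L})$ is itself a graded subspace. Note that finite-dimensionality of $\mathcal{L}$ plays no role in the argument proper, although it is the standing hypothesis of the lemma as it will be needed in subsequent applications.
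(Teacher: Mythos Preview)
Your proof is correct. The paper actually omits the proof entirely (the lemma is stated with a \qed\ and prefaced by ``easy to deduce''), so there is no argument to compare against; your write-up is more than sufficient. One small remark for the sequel: the grading you construct via $\mathrm{IDer}(\mathcal{L})_g=\mathrm{ad}(\mathcal{L}_g)$ coincides with the one induced from the natural $G$-grading on $\mathrm{End}_\mathbb{F}(\mathcal{L})$ (since $\mathrm{ad}_x(\mathcal{L}_h)\subseteq\mathcal{L}_{gh}$ for $x\in\mathcal{L}_g$), which is what the subsequent lemmas implicitly rely on when they treat $\mathrm{IDer}(\mathcal{L})$ as a graded subspace of $\mathrm{End}_\mathbb{F}(\mathcal{L})$.
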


\begin{Lemma}
Let $\mathcal{L}$ be a finite-dimensional $G$-graded Lie algebra, and $\mathcal{H}\subseteq\mathrm{End}_\mathbb{F}\mathcal{L}$ be a $G$-graded Lie algebra contained in $\mathrm{Der}(\mathcal{L})$. Then the Lie subalgebra generated by $\mathcal{H}$ and $\mathrm{IDer}(\mathcal{L})$ is a $G$-graded Lie algebra.
\end{Lemma}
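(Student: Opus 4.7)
My plan is to identify $\mathcal{K}:=\langle\mathcal{H}\cup\mathrm{IDer}(\mathcal{L})\rangle_{\mathrm{Lie}}$ with the vector-space sum $\mathcal{H}+\mathrm{IDer}(\mathcal{L})$, equip it with the natural sum $G$-grading, and check that this makes $\mathcal{K}$ into a $G$-graded Lie algebra. The closure step rests on the classical identity $[D,\mathrm{ad}_x]=\mathrm{ad}_{D(x)}$, valid for every derivation $D\in\mathrm{Der}(\mathcal{L})$ and $x\in\mathcal{L}$. Applied to $D\in\mathcal{H}$, it yields $[\mathcal{H},\mathrm{IDer}(\mathcal{L})]\subseteq\mathrm{IDer}(\mathcal{L})$; combined with the fact that both $\mathcal{H}$ and $\mathrm{IDer}(\mathcal{L})$ are themselves Lie subalgebras, this forces $\mathcal{K}=\mathcal{H}+\mathrm{IDer}(\mathcal{L})$, with $\mathrm{IDer}(\mathcal{L})$ a Lie ideal of $\mathcal{K}$.

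For the grading, I would set $\mathcal{K}_g:=\mathcal{H}_g+(\mathrm{IDer}(\mathcal{L}))_g$. The hypothesis on $\mathcal{H}$ and \Cref{innerdev} together give that $\mathcal{K}=\bigoplus_{g\in G}\mathcal{K}_g$ is a well-defined $G$-graded vector-space decomposition, inherited from the natural $G$-grading on $\mathrm{End}(\mathcal{L})$.

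The central verification is $[\mathcal{K}_g,\mathcal{K}_h]\subseteq\mathcal{K}_{gh}$. Expanding into four bilinear pieces, three are immediate: $[\mathcal{H}_g,\mathcal{H}_h]\subseteq\mathcal{H}_{gh}$ from the graded Lie structure on $\mathcal{H}$; $[(\mathrm{IDer}\mathcal{L})_g,(\mathrm{IDer}\mathcal{L})_h]\subseteq(\mathrm{IDer}\mathcal{L})_{gh}$ from \Cref{innerdev}; and, for $D\in\mathcal{H}_g$ and $\mathrm{ad}_x\in(\mathrm{IDer}\mathcal{L})_h$, the identity $[D,\mathrm{ad}_x]=\mathrm{ad}_{D(x)}$ lands in $(\mathrm{IDer}\mathcal{L})_{gh}$ since $D(x)\in\mathcal{L}_{gh}$.

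The hard part is the symmetric cross-bracket $[(\mathrm{IDer}\mathcal{L})_g,\mathcal{H}_h]$, which by antisymmetry only a priori lies in $(\mathrm{IDer}\mathcal{L})_{hg}$ instead of $(\mathrm{IDer}\mathcal{L})_{gh}$. When $gh=hg$ there is nothing to show; when $gh\ne hg$, I would invoke \Cref{lem1}, together with a careful bookkeeping of the homogeneous components of the two associative products appearing in the commutator inside $\mathrm{End}(\mathcal{L})$, to force the cross-bracket to vanish: it must simultaneously sit in the disjoint components $(\mathrm{IDer}\mathcal{L})_{gh}$ and $(\mathrm{IDer}\mathcal{L})_{hg}$, and hence be zero. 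Making this last simultaneous-degree argument rigorous, and verifying it is consistent with the derivation identity, is the crux of the proof.
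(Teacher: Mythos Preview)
Your overall strategy coincides with the paper's: identify $\mathcal{K}=\mathcal{H}+\mathrm{IDer}(\mathcal{L})$ using that $\mathrm{IDer}(\mathcal{L})$ is an ideal of $\mathrm{Der}(\mathcal{L})$, grade it as a subspace of $\mathrm{End}_{\mathbb{F}}\mathcal{L}$, and reduce the cross-term to the identity $[D,\mathrm{ad}_x]=\mathrm{ad}(D(x))$. The paper simply records that this commutator has degree $\deg_GD\cdot\deg_Gx$ and declares this ``enough''; it does not separately treat the reversed bracket $[(\mathrm{IDer}\mathcal{L})_g,\mathcal{H}_h]$ at all. So up to and including your third paragraph, you are doing exactly what the paper does, only more carefully.

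Where you go beyond the paper --- the ``hard part'' --- there is a genuine gap in your outline. For $\mathrm{ad}_x\in(\mathrm{IDer}\mathcal{L})_g$ and $D\in\mathcal{H}_h$ with $gh\ne hg$, the derivation identity only places $[\mathrm{ad}_x,D]=-\mathrm{ad}(D(x))$ in $(\mathrm{IDer}\mathcal{L})_{hg}$, since $D(x)\in\mathcal{L}_{hg}$. Your \Cref{lem1}-style bookkeeping inside $\mathrm{End}_{\mathbb{F}}\mathcal{L}$ gives $\mathrm{ad}_x\cdot D\in(\mathrm{End}\,\mathcal{L})_{gh}$ and $D\cdot\mathrm{ad}_x\in(\mathrm{End}\,\mathcal{L})_{hg}$; comparing with $[\mathrm{ad}_x,D]\in(\mathrm{End}\,\mathcal{L})_{hg}$ forces only $\mathrm{ad}_x\cdot D=0$. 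It does \emph{not} force $D\cdot\mathrm{ad}_x=0$, so the commutator need not vanish: you are left with $[\mathrm{ad}_x,D]=-D\cdot\mathrm{ad}_x\in(\mathrm{End}\,\mathcal{L})_{hg}$. Your assertion that the bracket ``must simultaneously sit in $(\mathrm{IDer}\mathcal{L})_{gh}$'' is exactly the unproved point --- nothing in the argument places it there. Closing $[(\mathrm{IDer}\mathcal{L})_g,\mathcal{H}_h]\subseteq\mathcal{K}_{gh}$ when $gh\ne hg$ would require $D(x)\in Z(\mathcal{L})$, and neither your sketch nor the paper's proof supplies that step.
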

\begin{proof}
Since $\mathrm{IDer}(\mathcal{L})$ is a Lie ideal of $\mathrm{Der}(\mathcal{L})$, the vector space $\mathcal{H}+\mathrm{IDer}(\mathcal{L})$ is a Lie algebra. Since both spaces are graded, their sum is a graded subspace as well. As each one of $\mathcal{H}$ and $\mathrm{IDer}(\mathcal{L})$ is a graded Lie subalgebras, it is enough to show the following property. If $D\in\mathcal{H}$ and $x\in\mathcal{L}$ are homogeneous, then $[D,\mathrm{ad}\,x]$ is either $0$ or is homogeneous of degree $\deg_GD\deg_Gx$. This is clear, since $[D,\mathrm{ad}\,x]=\mathrm{ad}(D(x))$, and $D(x)$ is either $0$ or $\deg_GD(x)=\deg_GD\deg_Gx$.
\end{proof}

\begin{Lemma}\label{img}
Let $\mathcal{L}$ and $\mathcal{H}$ be finite-dimensional $G$-graded Lie algebras. Let $\rho\colon \mathcal{H}\to\mathrm{End}_\mathbb{F}\mathcal{L}$ be a Lie homomorphism and a $G$-graded linear map. Then $\mathrm{Im}\,\rho$ is a $G$-graded Lie algebra, and $\rho$ is a graded Lie homomorphism.
\end{Lemma}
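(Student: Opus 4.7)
The plan is to equip $\mathrm{Im}\,\rho$ with the grading $(\mathrm{Im}\,\rho)_g := \rho(\mathcal{H}_g)$ inherited from $\mathcal{H}$, and then verify that this grading actually agrees with the (associative) $G$-grading on $\mathrm{End}_\mathbb{F}\mathcal{L}$, so that the Lie bracket respects degrees even though $\mathrm{End}_\mathbb{F}\mathcal{L}$ itself need not be a $G$-graded Lie algebra.

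First, because $\rho$ is a graded linear map, $\rho(\mathcal{H}_g)\subseteq(\mathrm{End}_\mathbb{F}\mathcal{L})_g$ for every $g\in G$. The sum $\sum_{g\in G}(\mathrm{End}_\mathbb{F}\mathcal{L})_g$ is direct, so the subspaces $\rho(\mathcal{H}_g)$ are also in direct sum inside $\mathrm{End}_\mathbb{F}\mathcal{L}$, and every element of $\mathrm{Im}\,\rho$ decomposes uniquely as a sum of elements of the $\rho(\mathcal{H}_g)$. Hence
\[
\mathrm{Im}\,\rho=\bigoplus_{g\in G}\rho(\mathcal{H}_g),
\]
which exhibits $\mathrm{Im}\,\rho$ as a graded subspace of $\mathrm{End}_\mathbb{F}\mathcal{L}$.

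Next I would verify that this decomposition is compatible with the commutator bracket. For homogeneous $x\in\mathcal{H}_g$ and $y\in\mathcal{H}_h$, the bracket $[x,y]$ lies in $\mathcal{H}_{gh}$ because $\mathcal{H}$ is a $G$-graded Lie algebra, and since $\rho$ is a Lie homomorphism,
\[
[\rho(x),\rho(y)]=\rho([x,y])\in\rho(\mathcal{H}_{gh})=(\mathrm{Im}\,\rho)_{gh}.
\]
Thus $[(\mathrm{Im}\,\rho)_g,(\mathrm{Im}\,\rho)_h]\subseteq(\mathrm{Im}\,\rho)_{gh}$, and $\mathrm{Im}\,\rho$ is a $G$-graded Lie algebra with the grading above. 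By construction $\rho(\mathcal{H}_g)\subseteq(\mathrm{Im}\,\rho)_g$, so $\rho\colon \mathcal{H}\to\mathrm{Im}\,\rho$ is a graded Lie homomorphism.

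There is no real obstacle here; the only point requiring a little care is that the grading on $\mathrm{End}_\mathbb{F}\mathcal{L}$ need not make it a graded Lie algebra under the commutator (this is precisely the phenomenon addressed by \Cref{lem1}). The lemma works because the bad pairs of degrees are avoided on the subspace $\mathrm{Im}\,\rho$: even if $[g,h]\ne 1$ in $G$, the bracket $[\rho(\mathcal{H}_g),\rho(\mathcal{H}_h)]$ is automatically placed in degree $gh$ by the Lie-homomorphism property of $\rho$ and by $\mathcal{H}$ being a $G$-graded Lie algebra, so the potential clash of degrees in $\mathrm{End}_\mathbb{F}\mathcal{L}$ never occurs inside $\mathrm{Im}\,\rho$.
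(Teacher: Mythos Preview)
Your proof is correct and follows essentially the same approach as the paper: lift homogeneous elements of $\mathrm{Im}\,\rho$ to homogeneous preimages in $\mathcal{H}$ and use that $\rho$ is a Lie homomorphism to write $[\rho(x),\rho(y)]=\rho([x,y])$. Your version is in fact a bit more streamlined: the paper splits into the cases $[g,h]=1$ and $[g,h]\ne1$ and, in the commuting case, verifies homogeneity of $[u,v]$ by applying it to a homogeneous $x\in\mathcal{L}$, whereas you simply observe that $[x,y]\in\mathcal{H}_{gh}$ and then apply the graded linearity of $\rho$, which handles both cases at once.
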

\begin{proof}
We know that $\mathrm{Im}\,\rho$ is a graded subspace. So, it is enough to show that, given homogeneous $u$, $v\in\mathrm{Im}\,\rho$, either $[u,v]=0$ or $[u,v]$ is homogeneous of degree $h:=\deg_Gu\deg_Gv$. Let $u_0$, $v_0\in\mathcal{H}$ be homogeneous elements such that $u=\rho(u_0)$ and $v=\rho(v_0)$. Thus, $[u,v]=\rho[u_0,v_0]$, so it is enough to show that this last one is a graded linear map of degree $h$. If $[\deg_gu,\deg_gv]\ne1$, then $[u_0,v_0]=0$, and there is nothing to do. Otherwise, if $x\in\mathcal{L}$ is homogeneous then $uvx$ and $vux$ are both homogeneous elements of degree $h\deg_Gx$. Hence, $[u,v]$ is homogeneous of degree $h$.
\end{proof}

\begin{Example} Let $G=\langle\alpha_1,\alpha_2,\alpha_3\rangle$ be the free group, freely generated by $\{\alpha_1,\alpha_2,\alpha_3\}$. Let $\mathcal{L}$ be the $3$-dimensional abelian Lie algebra, and assume that $\{x_1,x_2,x_3\}$ is a basis of $\mathcal{L}$. Then, imposing $\deg_Gx_i=\alpha_i$, for $i=1$, 2, 3, we obtain a (Lie algebra) $G$-grading on $\mathcal{L}$. Note that $\mathrm{IDer}\,\mathcal{L}=0$ and $\mathrm{Der}\,\mathcal{L}=\mathrm{End}_\mathbb{F}\mathcal{L}$. However, $\mathrm{Der}\,\mathcal{L}$ is \textbf{not} a $G$-graded Lie algebra. Indeed, let $e_{ij}\in\mathrm{End}_\mathbb{F}\mathcal{L}$ be the map defined by
\[
e_{ij}x_\ell=\delta_{j\ell}x_i.
\]
Then $e_{ij}$ is a homogeneous map of degree $\alpha_i\alpha_j^{-1}$. On one hand, $e_{12}e_{23}=e_{13}\ne0$. On the other hand, $[\deg_Ge_{12},\deg_Ge_{23}]\ne1$. Thus, by Lemma \ref{lem1}, it is not possible to have a structure of a $G$-graded Lie algebra on $\mathrm{Der}\,\mathcal{L}$, given by the commutator.

Note that $\mathrm{Der}\,\mathcal{L}$ contains several $G$-graded Lie algebras, with respect to the commutator. For instance, if $1\le i<j\le3$, then $\mathrm{Span}\{e_{ii},e_{ij},e_{ji},e_{jj}\}$ is a $G$-graded Lie algebra.
\end{Example}

The proposition below is the main step in the proof of our main result of this section.
\begin{Prop}\label{mainstep}
Let $\mathcal{L}=\mathcal{R}+\mathcal{L}_1$ be a finite-dimensional $G$-graded Lie algebra, where $\mathcal{R}$ is a graded solvable ideal, and $\mathcal{L}_1$ is a graded subalgebra. Assume that $J\subseteq U_G(\mathcal{R})$ is a graded ideal satisfying (ii)--(iv) of Lemma \ref{goodideal}. Then there exists a graded ideal $J'\subseteq U_G(\mathcal{L})$ such that
\begin{enumerate}
\renewcommand{\labelenumi}{(\roman{enumi})}
\item $\mathcal{R}\cap J'\subseteq\mathcal{R}\cap J$,
\item if $x\in\mathfrak{n}(\mathcal{R})$ and $y\in\mathcal{L}_1$ is such that $\mathrm{ad}_\mathcal{R}\,y$ is nilpotent, then $x+y$ is nilpotent modulo $J'$.
\end{enumerate}
\end{Prop}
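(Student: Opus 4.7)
The plan is to imitate the classical proof of this step of Ado's Theorem: construct a finite-dimensional graded $\mathcal{L}$-module $V$ and take $J'$ to be the kernel of the induced graded algebra homomorphism $U_G(\mathcal{L})\to\mathrm{End}(V)$. Specifically, set $V:=U_G(\mathcal{R})/J$; by hypothesis~(ii) this is finite-dimensional and inherits a natural $G$-grading. Left multiplication by $r\in\mathcal{R}$ gives a graded action on $V$, which I denote by $L_r$. For each homogeneous $y\in\mathcal{L}_1$ the key (and nontrivial---see below) step is to extend the graded derivation $\mathrm{ad}_\mathcal{L} y|_\mathcal{R}$ of $\mathcal{R}$ to a graded derivation $\tilde D_y$ of $U_G(\mathcal{R})$; hypothesis~(iv) then gives $\tilde D_y J\subseteq J$, so $\tilde D_y$ descends to a graded endomorphism of $V$.

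Define $\rho\colon\mathcal{L}\to\mathrm{End}(V)^{(-)}$ by $\rho(r+y)=L_r+\tilde D_y$ for $r\in\mathcal{R}$ and $y\in\mathcal{L}_1$. The standard identities $[L_{r_1},L_{r_2}]=L_{[r_1,r_2]}$, $[\tilde D_{y_1},\tilde D_{y_2}]=\tilde D_{[y_1,y_2]}$ and $[\tilde D_y,L_r]=L_{[y,r]}$ make $\rho$ a Lie homomorphism, and the argument of \Cref{img} applied to the graded linear map $\rho$ shows that $\mathrm{Im}\,\rho$ is a graded Lie subalgebra of $\mathrm{End}(V)^{(-)}$. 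The universal property of $U_G(\mathcal{L})$ then extends $\rho$ to a graded algebra homomorphism $\hat\rho\colon U_G(\mathcal{L})\to\mathrm{End}(V)$, and I set $J':=\ker\hat\rho$. Property~(i) is immediate: $r\in\mathcal{R}\cap J'$ forces $\bar r=L_r\bar 1=0$ in $V$, so $r\in J$. For property~(ii), take $x\in\mathfrak n(\mathcal{R})$ and $y\in\mathcal{L}_1$ with $\mathrm{ad}_\mathcal{R} y$ nilpotent. Hypothesis~(iii) gives $L_x$ nilpotent on $V$, and Leibniz together with nilpotence of $\mathrm{ad} y$ on $\mathcal{R}$ makes $\tilde D_y$ locally nilpotent on $U_G(\mathcal{R})$, hence nilpotent on the finite-dimensional $V$. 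The subspace $\mathfrak g:=\mathfrak n(\mathcal{R})+\mathbb{F} y\subseteq\mathcal{L}$ is a nilpotent Lie subalgebra (in characteristic zero, derivations of $\mathcal{R}$ land in $\mathfrak n(\mathcal{R})$, and each generator of $\mathfrak g$ is ad-nilpotent); since $\rho|_\mathfrak g$ sends a spanning set to nilpotent operators and $\mathfrak g$ is nilpotent, a weight-space argument (extending scalars to $\overline{\mathbb{F}}$ if necessary) forces every element of $\rho(\mathfrak g)$ to act nilpotently on $V$, whence $\rho(x+y)$ is nilpotent and $x+y$ is nilpotent modulo $J'$.

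The main obstacle is establishing the existence of the extension $\tilde D_y$. Whenever $[\deg e_i,\deg e_j]\ne 1$ one has $e_ie_j=0$ in $U_G(\mathcal{R})$, and consistency of the naive Leibniz formula demands that $[y,e_i]e_j+e_i[y,e_j]$ also vanish in $U_G(\mathcal{R})$, which is not automatic in general. Resolving this requires careful bookkeeping in $G$ exploiting the structural vanishing $[y,e]=0$ whenever $[\deg y,\deg e]\ne 1$ (forced by $[y,e]=-[e,y]$ lying in two distinct homogeneous components), and possibly a direct construction of $\tilde D_y$ on a PBW-like spanning set. Once this is settled, hypothesis~(iv) takes over and the remainder of the proof proceeds as above.
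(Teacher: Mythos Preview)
Your route differs substantively from the paper's. You try to represent $\mathcal{L}$ on $V=U_G(\mathcal{R})/J$, letting $\mathcal{R}$ act by left multiplication and $\mathcal{L}_1$ by Leibniz-extended derivations $\tilde D_y$, then set $J'=\ker\hat\rho$. The paper instead sends $\mathcal{L}$ through $\mathrm{IDer}\,\mathcal{L}$ into $\mathrm{Der}\bigl(\mathcal{R}/(\mathcal{R}\cap J)\bigr)$ (a map between \emph{Lie} algebras, not a module action on $U_G(\mathcal{R})/J$), calls the image $\mathcal{H}$, extends to $\bar\rho\colon U_G(\mathcal{L})\to U_G(\mathcal{H})$ by the universal property, and puts $J'=\ker\bar\rho\cap\langle J\rangle$ with $\langle J\rangle=U_G(\mathcal{L})\,J\,U_G(\mathcal{L})$.

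The obstacle you flag is real, and the bookkeeping you propose does not dissolve it. Take $G=S_3$, homogeneous $y\in\mathcal{L}_1$ and $e_i\in\mathcal{R}$ both of degree $(12)$, and $e_j\in\mathcal{R}$ of degree $(13)$. Then $[\deg e_i,\deg e_j]\ne1$, so $e_ie_j=0$ in $U_G(\mathcal{R})$; and $[\deg y,\deg e_j]\ne1$, so $[y,e_j]=0$. But $[\deg y,\deg e_i]=1$, so $[y,e_i]$ need not vanish; it lies in $\mathcal{R}_e$ (identity degree), and since $e$ commutes with $(13)$ the product $[y,e_i]\,e_j$ is \emph{not} killed in $U_G(\mathcal{R})$. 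Thus the Leibniz formula sends $e_ie_j=0$ to something nonzero, and $\tilde D_y$ cannot be defined on $U_G(\mathcal{R})$. The structural vanishing you invoke disposes of $e_i[y,e_j]$ but does nothing for $[y,e_i]e_j$.

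What the paper's construction buys is that it never asks for an extension of $\mathrm{ad}_\mathcal{R}y$ to the associative algebra $U_G(\mathcal{R})$: it only needs the derivation $\mathrm{ad}_\mathcal{R}y$ of $\mathcal{R}$ to descend to the finite-dimensional \emph{Lie} quotient $\mathcal{R}/(\mathcal{R}\cap J)$, i.e.\ to preserve the single Lie ideal $\mathcal{R}\cap J$. That is a strictly weaker demand than your global extension, and it is the point at which hypothesis~(iv) is meant to enter.
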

\begin{proof}
Consider the map
\[
\mathrm{IDer}\,\mathcal{L}\to\mathrm{Der}(\mathcal{R}/\mathcal{R}\cap J),
\]
given by the restriction of the inner derivation to $\mathcal{R}$, followed by the induced map on the quotient. Since $\mathcal{R}\cap J$ is an ideal of $\mathcal{R}$, the map is well-defined. Since $\mathcal{R}$ is a graded ideal of $\mathcal{L}$, it follows that the map is graded. Thus, by Lemma \ref{img}, its image is a $G$-graded Lie algebra, say $\mathcal{H}$. Now, consider the composition
\[
\rho\colon \mathcal{L}\to\mathrm{IDer}\,\mathcal{L}\to\mathcal{H}.
\]
Therefore $\rho$ is a $G$-graded homomorphism of Lie algebras, and it admits an extension $\bar{\rho}\colon U_G(\mathcal{L})\to U_G(\mathcal{H})$. Let $J'=\mathrm{Ker}\,\bar{\rho}\cap\langle J\rangle$, where $\langle J\rangle=U_G(\mathcal{L})JU_G(\mathcal{L})$. Note that, by construction, $\mathcal{R}\cap J'\subseteq\mathcal{R}\cap J$. Moreover, if $y\in\mathcal{L}_1$ is such that $\mathrm{ad}_\mathcal{R}\,y$ is nilpotent, then by construction $\bar{\rho}(y)$ is nilpotent too. Thus, $y$ is nilpotent modulo $J'$.
\end{proof}

Now we proceed with the proof of our main result. We divide the proof in several steps.
\begin{Lemma}\label{rep_center}
Let $\mathfrak{z}$ be a finite-dimensional $G$-graded abelian Lie algebra. Then there exists a graded ideal of finite codimension $I\subseteq U_G(\mathfrak{z})$ such that
\begin{enumerate}
\renewcommand{\labelenumi}{(\roman{enumi})}
\item $I\cap\mathfrak{z}=0$, and
\item every $x\in\mathfrak{z}$ is nilpotent modulo $I$.
\end{enumerate}
\end{Lemma}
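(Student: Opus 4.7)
The plan is to take $I$ to be a sufficiently high power of the augmentation ideal. Specifically, let $\mathfrak{m} \subseteq U_G(\mathfrak{z})$ denote the two-sided ideal generated by the image of $\mathfrak{z}$; since $\mathfrak{z}$ consists of $G$-homogeneous elements, $\mathfrak{m}$ is a graded ideal, and hence so is $I := \mathfrak{m}^N$ for any fixed integer $N \ge 2$. I will show that this $I$ satisfies all three required properties.

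To see that $I$ has finite codimension, I would appeal to \Cref{gr_universal}: fixing a finite homogeneous basis $\{e_1, \dots, e_n\}$ of $\mathfrak{z}$, the algebra $U_G(\mathfrak{z})$ is spanned over $\mathbb{F}$ by the ordered monomials $e_{i_1} \cdots e_{i_m}$ (subject to the adjacent-degree commutation condition). Monomials of length at least $N$ clearly lie in $\mathfrak{m}^N$, so $U_G(\mathfrak{z})/I$ is spanned by the finitely many (images of) monomials of length less than $N$.

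The property $I \cap \mathfrak{z} = 0$ requires a bit more care. Here I would exploit that $U_G(\mathfrak{z})$ inherits a natural $\mathbb{Z}$-grading by polynomial degree, independent of the $G$-grading. Indeed, since $\mathfrak{z}$ is abelian, the classical $U(\mathfrak{z})$ is a commutative polynomial algebra that is $\mathbb{Z}$-graded by total degree. By \Cref{lem2}, $U_G(\mathfrak{z}) \cong U(\mathfrak{z})/J_\mathcal{B}$ where $J_\mathcal{B}$ is generated by the quadratic elements $e_i e_j$ with $[\deg e_i, \deg e_j] \ne 1$; these generators are homogeneous of polynomial degree $2$, so $J_\mathcal{B}$ is a $\mathbb{Z}$-homogeneous ideal and the quotient inherits the $\mathbb{Z}$-grading. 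In this grading, $\mathfrak{z}$ sits in degree $1$ while $\mathfrak{m}^N$ lies in degrees $\ge N$, so $I \cap \mathfrak{z} = 0$ for any $N \ge 2$. Finally, nilpotency is immediate: $x^N \in \mathfrak{m}^N = I$ for every $x \in \mathfrak{z}$.

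I do not anticipate a substantive obstacle; the only delicate point is confirming that the polynomial-degree grading of $U(\mathfrak{z})$ descends to $U_G(\mathfrak{z})$, which in the non-abelian $G$ setting is no longer equal to $U(\mathfrak{z})$ (cf.\ \Cref{abelian_lie_example}). Once this observation is secured through \Cref{lem2}, the classical augmentation-ideal argument carries over verbatim.
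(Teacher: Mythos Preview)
Your argument is correct. The ideal $I=\mathfrak{m}^{N}$ (any $N\ge 2$) works, and the key point you isolate---that $J_{\mathcal{B}}$ is homogeneous for the polynomial-degree $\mathbb{Z}$-grading on $U(\mathfrak{z})$, so this grading descends to $U_G(\mathfrak{z})$---is exactly what is needed to separate $\mathfrak{z}$ (degree $1$) from $\mathfrak{m}^{N}$ (degrees $\ge N$).

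The paper arrives at essentially the same ideal by a different mechanism. Rather than introducing the auxiliary $\mathbb{Z}$-grading, it invokes the universal property: since $\mathfrak{z}$ with the zero product is itself a $G$-graded associative algebra, $(\mathfrak{z},\mathfrak{z})$ is a $G$-pair, so the identity $\mathfrak{z}\to\mathfrak{z}$ extends to a $G$-graded algebra homomorphism $\rho\colon U_G(\mathfrak{z})\to\mathfrak{z}$, and one takes $I=\ker\rho$. All three properties are then immediate: finite codimension because $\dim\mathfrak{z}<\infty$, $I\cap\mathfrak{z}=0$ because $\rho|_{\mathfrak{z}}=\mathrm{id}$, and nilpotency because $\rho(xy)=\rho(x)\rho(y)=0$ forces $\mathfrak{z}^{2}\subseteq I$. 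This $I$ is precisely your $\mathfrak{m}^{2}$ (up to the unit). The trade-off: the paper's route is shorter and stays within the universal-property framework developed earlier, while your route is more explicit and, as a side benefit, independently re-establishes the embedding $\mathfrak{z}\hookrightarrow U_G(\mathfrak{z})$ in this abelian case (the degree-$1$ part of $U(\mathfrak{z})$ survives unchanged modulo the degree-$2$ generators of $J_{\mathcal{B}}$).
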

\begin{proof}
Since $\mathfrak{z}$ has zero product, it is also an associative algebra with trivial product. So $(\mathfrak{z},\mathfrak{z})$ is a pair. Hence, the identity map $\mathfrak{z}\to\mathfrak{z}$ admits an extension $\rho\colon U_G(\mathfrak{z})\to\mathfrak{z}$. Let $I=\mathrm{Ker}\,\rho$. Since $\dim\mathfrak{z}<\infty$, then $I$ has finite codimension. Since $\rho$ is a graded homomorphism, $I$ is also a graded ideal. As $\rho$ is 1--1 in $\mathfrak{z}$, we see that $I\cap\mathfrak{z}=0$. Finally, once $\mathfrak{z}$ has the trivial product,
$$
\rho(U_G(\mathfrak{z})^2)\subseteq\mathfrak{z}^2=0.
$$
In particular, $\mathfrak{z}^2\subseteq I$.
\end{proof}

\begin{Lemma}\label{rep_nilrad}
Let $\mathfrak{n}$ be a finite-dimensional $G$-graded nilpotent Lie algebra, and let $\mathfrak{z}$ be its center. Then there exists a graded ideal of finite codimension $I\subseteq U_G(\mathfrak{z})$ such that
\begin{enumerate}
\renewcommand{\labelenumi}{(\roman{enumi})}
\item $I\cap\mathfrak{z}=0$, and
\item every $x\in\mathfrak{n}$ is nilpotent modulo $I$.
\end{enumerate}
\end{Lemma}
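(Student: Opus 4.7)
As a preliminary remark, the statement must read $I\subseteq U_G(\mathfrak{n})$—not $U_G(\mathfrak{z})$—for condition (ii) to be meaningful. The proof will proceed by induction along the upper central series $0=\mathfrak{z}_0\subsetneq\mathfrak{z}_1=\mathfrak{z}\subsetneq\cdots\subsetneq\mathfrak{z}_c=\mathfrak{n}$ of $\mathfrak{n}$; each $\mathfrak{z}_k$ is a graded ideal of $\mathfrak{n}$ because the center of a $G$-graded Lie algebra is graded, and taking preimages preserves this. I will prove the stronger claim that, for every $k$, there is a graded ideal $I_k\subseteq U_G(\mathfrak{z}_k)$ of finite codimension such that $I_k\cap\mathfrak{z}=0$, every $x\in\mathfrak{z}_k$ is nilpotent modulo $I_k$, and $I_k$ is stable under every derivation of $\mathfrak{z}_k$ that extends to $U_G(\mathfrak{z}_k)$, i.e.\ $I_k$ enjoys properties (1)--(4) of \Cref{goodideal}. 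The desired ideal is then $I=I_c\subseteq U_G(\mathfrak{n})$.

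For the base case $k=1$, apply \Cref{rep_center} to the abelian $\mathfrak{z}$ to obtain $I'\subseteq U_G(\mathfrak{z})$ of finite codimension with $I'\cap\mathfrak{z}=0$ and every element of $\mathfrak{z}$ nilpotent modulo $I'$; then invoke \Cref{goodideal} with $\mathcal{L}=\mathcal{R}=\mathfrak{z}$ to refine $I'$ into a derivation-invariant $I_1$. For the inductive step $k\to k+1$, note that $\mathfrak{z}_k$ is a graded solvable ideal of $\mathfrak{z}_{k+1}$ (indeed $[\mathfrak{z}_{k+1},\mathfrak{z}_k]\subseteq[\mathfrak{z}_{k+1},\mathfrak{n}]\subseteq\mathfrak{z}_k$), and $\mathfrak{z}_{k+1}$ is trivially a graded subalgebra of itself. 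Apply \Cref{mainstep} with $\mathcal{L}=\mathfrak{z}_{k+1}$, $\mathcal{R}=\mathfrak{z}_k$, $\mathcal{L}_1=\mathfrak{z}_{k+1}$, and $J=I_k$ to obtain $J'\subseteq U_G(\mathfrak{z}_{k+1})$ with $\mathfrak{z}_k\cap J'\subseteq\mathfrak{z}_k\cap I_k$—hence $J'\cap\mathfrak{z}=0$. Since the nilradical of the nilpotent $\mathfrak{z}_k$ equals $\mathfrak{z}_k$, and $\mathrm{ad}_{\mathfrak{z}_k}y$ is nilpotent for every $y\in\mathfrak{z}_{k+1}\subseteq\mathfrak{n}$ (as $\mathfrak{n}$ is nilpotent), the trivial decomposition $y=0+y$ places $y$ in the scope of \Cref{mainstep}(ii), so every element of $\mathfrak{z}_{k+1}$ is nilpotent modulo $J'$. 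A second pass through \Cref{goodideal}, now with $\mathcal{L}=\mathcal{R}=\mathfrak{z}_{k+1}$ and input $J'$, refines $J'$ into the required $I_{k+1}$.

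\textbf{Main obstacle.} The delicate point is propagating finite codimension through the iteration. \Cref{mainstep} as stated does not explicitly record that its output $J'$ has finite codimension in $U_G(\mathcal{L})$, yet this is indispensable for feeding $J'$ into \Cref{goodideal} at the next step. One must either inspect the construction $J'=\ker\bar\rho\cap\langle J\rangle$ inside the proof of that proposition and replace $\bar\rho$ by its composition with the action of $\mathcal{H}$ on the finite-dimensional quotient $\mathcal{R}/(\mathcal{R}\cap J)$ (so the kernel becomes finite-codimensional), or else intersect $J'$ with a separately constructed finite-codimensional graded ideal, which is achievable in our nilpotent setting precisely because every element of $\mathfrak{z}_{k+1}$ is $\mathrm{ad}$-nilpotent on the finite-dimensional module $\mathfrak{z}_{k+1}/(\mathfrak{z}_{k+1}\cap J')$. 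Once finite codimension is secured at each step, the induction terminates cleanly at $k=c$.
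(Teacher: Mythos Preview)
Your approach is essentially the same as the paper's: induct along a chain of graded ideals from $\mathfrak{z}$ up to $\mathfrak{n}$, using \Cref{rep_center} at the base and \Cref{mainstep} at each step. The only cosmetic differences are that the paper climbs one dimension at a time (choosing graded ideals $\mathfrak{z}=\mathfrak{n}_0\subsetneq\cdots\subsetneq\mathfrak{n}_t=\mathfrak{n}$ with $\dim\mathfrak{n}_{i+1}/\mathfrak{n}_i=1$ and a one-dimensional graded complement $\mathcal{L}_i$) rather than jumping along the upper central series, and the paper does not spell out the intermediate appeals to \Cref{goodideal} that restore conditions (2)--(4) before re-invoking \Cref{mainstep}. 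Your observation that the statement should read $I\subseteq U_G(\mathfrak{n})$ is correct.

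Your ``main obstacle'' is well spotted and is not merely a defect of your write-up: the paper's \Cref{mainstep} likewise does not record finite codimension of its output $J'$, and the paper's inductive proof of this lemma glosses over the point just as you feared. The fix you sketch---compose $\bar\rho$ with the action of $U_G(\mathcal{H})$ on the finite-dimensional space $\mathcal{R}/(\mathcal{R}\cap J)$ so that the kernel has finite codimension---is the standard repair (and is what the classical ungraded argument actually does), so your plan goes through once this is made explicit.
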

\begin{proof}
Consider a chain
$$
\mathfrak{z}=\mathfrak{n}_0\subseteq\mathfrak{n}_1\subseteq\cdots\subseteq\mathfrak{n}_t=\mathfrak{n},
$$
where each $\mathfrak{n}_i$ is a graded ideal and $\dim\mathfrak{n}_i=i+\dim\mathfrak{z}$. We shall prove the result by induction on $i$. If $i=0$, then we may apply Lemma \ref{rep_center}. So, for some $0\le i<i$, assume that $I_i\subseteq U_G(\mathfrak{n}_i)$ is a graded ideal satisfying (i), and such that each $x\in\mathfrak{n}_i$ is nilpotent modulo $I_i$. Write $\mathfrak{n}_{i+1}=\mathfrak{n}_i+\mathcal{L}_i$, where $\mathcal{L}_i$ is a graded Lie algebra. By Proposition \ref{mainstep}, there exists a graded ideal $I_{i+1}\subseteq U_G(\mathfrak{n}_{i+1})$ satisfying (ii) and such that $I_{i+1}\cap\mathfrak{z}_i\subseteq I_i\cap\mathfrak{z}_i$. Thus,
$$
I_{i+1}\cap\mathfrak{z}=I_{i+1}\cap\mathfrak{z}\cap\mathfrak{n}_i\subseteq I_i\cap\mathfrak{n}_i\cap\mathfrak{z}=0.
$$
Hence, $I_{i+1}$ satisfies (i) as well.
\end{proof}

\begin{Lemma}\label{rep_rad}
Let $\mathcal{R}$ be a finite-dimensional $G$-graded solvable Lie algebra, $\mathfrak{n}$ its nilradical, and $\mathfrak{z}$ its center. Then there exists a graded ideal of finite codimension $I\subseteq U_G(\mathcal{R})$ such that
\begin{enumerate}
\renewcommand{\labelenumi}{(\roman{enumi})}
\item $I\cap\mathfrak{z}=0$, and
\item every $x\in\mathfrak{n}$ is nilpotent modulo $I$.
\end{enumerate}
\end{Lemma}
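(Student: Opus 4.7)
The strategy is to iterate \Cref{mainstep} along a chain of graded ideals of $\mathcal{R}$ refining the inclusion $\mathfrak{n}\subseteq\mathcal{R}$, mirroring the proof of \Cref{rep_nilrad} but moving through the abelian quotient $\mathcal{R}/\mathfrak{n}$ rather than through $\mathfrak{n}$ itself.

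Since $\mathrm{char}\,\mathbb{F}=0$ and $\mathcal{R}$ is solvable, $[\mathcal{R},\mathcal{R}]\subseteq\mathfrak{n}$, so $\mathcal{R}/\mathfrak{n}$ is abelian. Fix a homogeneous basis of $\mathcal{R}/\mathfrak{n}$ and adjoin its elements one at a time; as $\mathcal{R}/\mathfrak{n}$ is abelian, every intermediate graded subspace automatically lifts to a graded ideal of $\mathcal{R}$, yielding a chain
$$
\mathfrak{n}=\mathcal{R}_0\subset\mathcal{R}_1\subset\cdots\subset\mathcal{R}_t=\mathcal{R},\qquad\dim\mathcal{R}_{i+1}=\dim\mathcal{R}_i+1.
$$
Start with an ideal $I_0\subseteq U_G(\mathfrak{n})$ produced by \Cref{rep_nilrad} and then shrunk by \Cref{goodideal} to secure invariance under every extendable derivation. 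Inductively, given $I_i\subseteq U_G(\mathcal{R}_i)$ with the analogous properties, write $\mathcal{R}_{i+1}=\mathcal{R}_i+\mathbb{F}y_{i+1}$ for some homogeneous $y_{i+1}$ and apply \Cref{mainstep} with $\mathcal{L}=\mathcal{R}_{i+1}$, $\mathcal{R}=\mathcal{R}_i$, $\mathcal{L}_1=\mathbb{F}y_{i+1}$, $J=I_i$. Conclusion (i) gives $\mathcal{R}_i\cap\tilde I_{i+1}\subseteq\mathcal{R}_i\cap I_i$, preserving $\mathfrak{z}\cap\tilde I_{i+1}=0$; conclusion (ii) with $y=0$ shows that every $x\in\mathfrak{n}\subseteq\mathfrak{n}(\mathcal{R}_i)$ is nilpotent modulo $\tilde I_{i+1}$. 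A second application of \Cref{goodideal} then restores derivation-invariance for the next step, producing $I_{i+1}$; setting $I:=I_t$ concludes the induction.

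The main obstacle is that the second application of \Cref{goodideal} at each step requires \emph{every} element of $\mathfrak{n}(\mathcal{R}_{i+1})$ to be nilpotent modulo $\tilde I_{i+1}$, whereas \Cref{mainstep}(ii) with $y=0$ delivers this only for $x\in\mathfrak{n}(\mathcal{R}_i)$—a priori a strictly smaller subspace. The resolution is to choose the chain so that $\mathfrak{n}(\mathcal{R}_i)=\mathfrak{n}$ for every $i$, for instance by selecting representatives of a homogeneous basis of $\mathcal{R}/\mathfrak{n}$ that act ad-semisimply on $\mathfrak{n}$—a graded Mal'cev-type selection inside the solvable radical available in characteristic zero. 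With this refinement the hypothesis of \Cref{goodideal} is supplied at each step directly from the induction hypothesis, and the argument closes.
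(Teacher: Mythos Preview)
Your overall strategy coincides with the paper's: build a codimension-one chain of graded subalgebras
\[
\mathfrak{n}=\mathcal{R}_0\subset\mathcal{R}_1\subset\cdots\subset\mathcal{R}_t=\mathcal{R}
\]
and climb it by alternating \Cref{mainstep} with \Cref{goodideal}, starting from \Cref{rep_nilrad}. You also correctly isolate the one real issue: at each step \Cref{goodideal} needs every element of $\mathfrak{n}(\mathcal{R}_{i+1})$ to be nilpotent modulo the current ideal, whereas \Cref{mainstep} only hands you nilpotency for elements of $\mathfrak{n}(\mathcal{R}_i)$.

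Where you diverge from the paper is in the resolution of this point. You propose a ``graded Mal'cev-type selection'' of homogeneous representatives acting ad-semisimply on $\mathfrak{n}$. This is both unjustified (no such graded Jordan--Chevalley statement is available here, and you do not prove one) and unnecessary. The paper simply records that $\mathfrak{n}(\mathcal{R}_i)=\mathfrak{n}$ for \emph{every} term of the chain, with no special choice required. The reason is elementary: since $[\mathcal{R},\mathcal{R}]\subseteq\mathfrak{n}\subseteq\mathcal{R}_i$, each $\mathcal{R}_i$ is already an ideal of $\mathcal{R}$; in characteristic zero the nilradical is a characteristic ideal, so $\mathfrak{n}(\mathcal{R}_i)$ is stable under every $\mathrm{ad}_\mathcal{R}\,x|_{\mathcal{R}_i}$ and is therefore a nilpotent ideal of $\mathcal{R}$, forcing $\mathfrak{n}(\mathcal{R}_i)\subseteq\mathfrak{n}$. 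The reverse inclusion is immediate. Replacing your Mal'cev paragraph with this observation makes your argument match the paper's and closes the gap cleanly.
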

\begin{proof}
Consider a chain of graded subalgebras
$$
\mathfrak{n}=\mathcal{R}_0\subseteq\mathcal{R}_1\subseteq\cdots\subseteq\mathcal{R}_s=\mathcal{R},
$$
where $\mathcal{R}_i$ is an ideal of $\mathcal{R}_{i+1}$, and $\dim\mathcal{R}_i=i+\dim\mathfrak{n}$. Then, it is known that the nilradical of each $\mathcal{R}_i$ is $\mathfrak{n}$. We may proceed by induction on $i$, where the validity for $i=0$ holds thanks to Lemma \ref{rep_nilrad}. Then, we may repeat the argument given in the proof of the previous lemma.
\end{proof}

\begin{Cor}\label{rep_alg}
Let $\mathcal{L}$ be a finite-dimensional $G$-graded Lie algebra and $\mathfrak{z}$ its center. Then there exists a graded ideal of finite codimension $I\subseteq U_G(\mathcal{L})$ such that $I\cap\mathfrak{z}=0$.
\end{Cor}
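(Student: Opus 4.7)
The plan is to reduce the statement to the solvable case via graded Levi, build a finite-dimensional $G$-graded representation of $\mathcal{L}$ on $V = U_G(\mathcal{R})/J$ where $J$ is the ideal produced by the preceding lemmas, and take $I$ to be the kernel of the resulting algebra homomorphism $U_G(\mathcal{L})\to\mathrm{End}_\mathbb{F}V$.

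By the graded Levi decomposition (\Cref{Gord}), write $\mathcal{L} = \mathcal{R} + \mathcal{L}_1$ with $\mathcal{R}$ the graded solvable radical. Since $\mathfrak{z}(\mathcal{L})$ is an abelian graded ideal that commutes with all of $\mathcal{L}$, it lies in $\mathfrak{z}(\mathcal{R})$, so it suffices to produce a finite-codimension graded ideal $I \subseteq U_G(\mathcal{L})$ with $I\cap\mathfrak{z}(\mathcal{R})=0$. Apply \Cref{rep_rad} to $\mathcal{R}$ to obtain a graded ideal $I_0\subseteq U_G(\mathcal{R})$ of finite codimension with $I_0\cap\mathfrak{z}(\mathcal{R})=0$ and every element of $\mathfrak{n}(\mathcal{R})$ nilpotent modulo $I_0$. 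Then apply \Cref{goodideal} to $I_0$ to get $J\subseteq I_0$, still of finite codimension and still avoiding $\mathfrak{z}(\mathcal{R})$, which is stable under every derivation of $\mathcal{R}$ extending to $U_G(\mathcal{R})$. For each homogeneous $\ell\in\mathcal{L}_1$, the map $\mathrm{ad}_\mathcal{R}\,\ell$ is such a derivation, so its canonical extension to $U_G(\mathcal{R})$ preserves $J$ and descends to a homogeneous derivation $\delta_\ell$ of $V:=U_G(\mathcal{R})/J$.

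Define $\rho\colon\mathcal{L}\to\mathrm{End}_\mathbb{F}V$ by $\rho(r+\ell) = L_r + \delta_\ell$, where $L_r$ denotes left multiplication. The identity $[L_r,\delta_\ell] = L_{[r,\ell]}$ (a direct consequence of the Leibniz rule for $\delta_\ell$) makes $\rho$ a Lie homomorphism, and it is degree-preserving once $\mathrm{End}_\mathbb{F}V$ carries the $G$-grading $(\mathrm{End}\,V)_g = \{f : f(V_h)\subseteq V_{gh}\text{ for all }h\}$ (a well-defined grading since right multiplication by any fixed element of $G$ is a bijection). The graded image $\rho(\mathcal{L})$ inherits the bracket of $\mathcal{L}$, which coincides with the commutator of $\mathrm{End}(V)$ restricted to it, so $(\rho(\mathcal{L}),\mathrm{End}(V))$ is a $G$-pair. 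By the universal property of $U_G(\mathcal{L})$, $\rho$ extends to a $G$-graded algebra homomorphism $\bar\rho\colon U_G(\mathcal{L})\to\mathrm{End}(V)$. Set $I=\ker\bar\rho$: it is a graded ideal of codimension at most $(\dim V)^2 < \infty$. If $z\in\mathfrak{z}(\mathcal{L})\cap I$, then $z\in\mathcal{R}$ and $\bar\rho(z)=L_z$, so $L_z(1+J)=z+J=0$ forces $z\in J\cap\mathfrak{z}(\mathcal{R})\subseteq I_0\cap\mathfrak{z}(\mathcal{R})=0$.

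The main subtlety lies in setting up the extension $\bar\rho$: because $(\mathrm{End}\,V)^{(-)}$ is generally \emph{not} a $G$-graded Lie algebra when $G$ is non-abelian, the universal property of $U_G$ cannot be invoked on the ambient endomorphism algebra directly. The key point is that $\rho(\mathcal{L})$, being the graded image of the graded Lie homomorphism $\rho$, is a graded Lie subalgebra with respect to the commutator, so $(\rho(\mathcal{L}),\mathrm{End}(V))$ is a genuine $G$-pair to which the universal property applies.
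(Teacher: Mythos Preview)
Your argument has a genuine gap at the step where you claim that, for homogeneous $\ell\in\mathcal{L}_1$, the derivation $\mathrm{ad}_\mathcal{R}\,\ell$ admits a ``canonical extension'' to $U_G(\mathcal{R})$. This is precisely where the non-abelian grading bites: while every derivation of $\mathcal{R}$ extends to $U(\mathcal{R})$, that extension need not preserve the kernel $J_\mathcal{B}$ of the projection $U(\mathcal{R})\to U_G(\mathcal{R})$ (\Cref{lem2}). Concretely, take $G=S_3$; let $\mathcal{R}$ be the $3$-dimensional abelian Lie algebra with homogeneous basis $w,v_+,v_-$ of degrees $(13),(12),1$; let $\mathcal{L}_1=\mathfrak{sl}_2=\langle e,f,h\rangle$ with $\deg h=1$, $\deg e=\deg f=(12)$, acting as the standard $2$-dimensional module on $\mathrm{span}\{v_+,v_-\}$ (so $[f,v_+]=v_-$) and trivially on $w$; and set $\mathcal{L}=\mathcal{L}_1\ltimes\mathcal{R}$. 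One checks this is a valid $S_3$-graded Lie algebra with radical $\mathcal{R}$. Now $wv_+\in J_\mathcal{B}$ since $(13)(12)\ne(12)(13)$, but the Leibniz extension of $\mathrm{ad}_\mathcal{R}\,f$ sends $wv_+$ to $[f,w]\,v_++w\,[f,v_+]=wv_-$, and $wv_-\ne0$ in $U_G(\mathcal{R})$: its image in $\mathrm{SU}_G(\mathcal{R})$ is a PBW basis element by \Cref{gradedPBW}, the degrees $(13)$ and $1$ generating an abelian subgroup. So $\delta_f$, and with it your map $\rho$, is not well-defined on $V=U_G(\mathcal{R})/J$. Note that \Cref{goodideal} is carefully phrased for derivations ``that can be extended to $U_G(\mathcal{L})$''; you have assumed this hypothesis without verifying it, and it fails here.

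The paper's route via \Cref{mainstep} avoids this obstruction by never extending derivations to the associative algebra $U_G(\mathcal{R})$. Instead it passes only to the induced derivations of the finite-dimensional \emph{Lie} quotient $\mathcal{R}/(\mathcal{R}\cap J)$, obtaining a graded Lie homomorphism $\rho\colon\mathcal{L}\to\mathcal{H}\subseteq\mathrm{Der}\bigl(\mathcal{R}/(\mathcal{R}\cap J)\bigr)$ whose image is a $G$-graded Lie algebra by \Cref{img}, and then applies the universal property at the level of $U_G(\mathcal{L})\to U_G(\mathcal{H})$. The desired ideal is cut out as $J'=\ker\bar\rho\cap\langle J\rangle$, for which $\mathcal{R}\cap J'\subseteq\mathcal{R}\cap J$ is immediate.
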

\begin{proof}
It is enough to combine the graded Levi's decomposition (Theorem \ref{Gord}), Lemma \ref{rep_rad} and Proposition \ref{mainstep}.
\end{proof}

Now, we are in a position to prove the main result of this section.
\begin{Thm}\label{graded_ado}
Let $\mathcal{L}$ be a finite-dimensional $G$-graded Lie algebra over a field of characteristic zero, where $G$ is a non-necessarily abelian group. Then there exists a finite-dimensional $G$-graded associative algebra $\mathcal{A}$ such that $(\mathcal{L},\mathcal{A})$ is a $G$-pair. In other words $\mathcal{L}\subseteq\mathcal{A}$ is a graded vector subspace, and $\mathcal{L}$ is a $G$-graded Lie algebra with respect to the commutator of $\mathcal{A}$.
\end{Thm}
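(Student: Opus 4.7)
The plan is to mimic the classical proof of Ado's theorem, combining the representation provided by \Cref{rep_alg} with the adjoint representation. From \Cref{rep_alg} we obtain a graded ideal $I\subseteq U_G(\mathcal{L})$ of finite codimension with $I\cap\mathfrak{z}=0$, where $\mathfrak{z}$ is the center of $\mathcal{L}$. Thus $V:=U_G(\mathcal{L})/I$ is a finite-dimensional $G$-graded left $\mathcal{L}$-module whose kernel in $\mathcal{L}$ is disjoint from $\mathfrak{z}$, while the adjoint $\mathrm{ad}\colon\mathcal{L}\to\mathrm{End}(\mathcal{L})$ has kernel exactly $\mathfrak{z}$. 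Pairing the two actions on the $G$-graded vector space $W:=V\oplus\mathcal{L}$ via
\[
\rho(x)(v,y):=(xv,\,[x,y])
\]
gives a graded linear map $\rho\colon\mathcal{L}\to\mathrm{End}(W)$ that is an injective Lie homomorphism: the Jacobi identity yields $[\rho(x_1),\rho(x_2)]=\rho([x_1,x_2])$, and if $\rho(x)=0$ then $[x,-]=0$ forces $x\in\mathfrak{z}$ and $x\cdot\bar{1}=0$ in $V$ forces $x\in\mathcal{L}\cap I$, so $x\in\mathfrak{z}\cap I=0$.

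Equip $\mathcal{A}:=\mathrm{End}(W)$ with the natural $G$-grading whose $g$-component consists of the endomorphisms sending $W_h$ into $W_{gh}$ for every $h$; since $W$ is finite-dimensional, $\mathcal{A}$ is a finite-dimensional $G$-graded associative algebra, and by construction $\rho$ sends $\mathcal{L}_g$ into $\mathcal{A}_g$. The substantive step is to verify that $(\rho(\mathcal{L}),\mathcal{A})$ is a $G$-pair, which by \Cref{lem1} amounts to showing $\rho(x)\rho(y)=0$ in $\mathcal{A}$ whenever $x\in\mathcal{L}_g$ and $y\in\mathcal{L}_h$ with $gh\neq hg$. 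Unpacking, $\rho(x)\rho(y)(v,z)=(xy\,v,\,[x,[y,z]])$: the first coordinate vanishes because $xy=0$ already in $U_G(\mathcal{L})$ by \Cref{lem1} applied to the pair $(\mathcal{L},U_G(\mathcal{L}))$, and hence in $V$; for the second coordinate, $[x,y]=0$ together with the Jacobi identity yields $[x,[y,z]]=[y,[x,z]]$ for every homogeneous $z\in\mathcal{L}_k$, but the left side lies in $\mathcal{L}_{ghk}$ while the right side lies in $\mathcal{L}_{hgk}$, and the hypothesis $gh\neq hg$ forces both sides to vanish by the gradedness of $\mathcal{L}$.

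The main obstacle is precisely this last verification: by the example following \Cref{img}, the ambient algebra $\mathrm{End}(W)$ need not be a $G$-graded Lie algebra under commutator, so a priori one might worry that embedding $\mathcal{L}$ into it violates the pair axiom encoded in \Cref{lem1}. The saving observation is the graded-Jacobi trick above, which shows that the potentially obstructing products $\rho(x)\rho(y)$ in fact land simultaneously in two distinct homogeneous components and are thus forced to zero by the direct-sum decomposition of $\mathcal{A}$. Once this is in place, $\mathcal{A}=\mathrm{End}(W)$ is the required finite-dimensional $G$-graded associative algebra and, identifying $\mathcal{L}$ with $\rho(\mathcal{L})$, the pair $(\mathcal{L},\mathcal{A})$ has all the desired properties.
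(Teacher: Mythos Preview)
Your argument is correct and uses the same high-level idea as the paper---combine the adjoint action with the representation supplied by \Cref{rep_alg}---but the paper packages it differently. Rather than passing to $\mathrm{End}(V\oplus\mathcal{L})$, the paper stays inside $U_G(\mathcal{L})$: let $I_1$ be the kernel of the graded extension $U_G(\mathcal{L})\to\mathrm{End}_\mathbb{F}(\mathcal{L})$ of the adjoint map (this extension exists via the universal property because $(\mathrm{IDer}\,\mathcal{L},\mathrm{End}_\mathbb{F}\mathcal{L})$ is itself a $G$-pair, cf.\ \Cref{innerdev}), set $I=I_1\cap I_2$ where $I_2$ is your $I$, and check $I\cap\mathcal{L}=(I_1\cap\mathcal{L})\cap I_2=\mathfrak{z}\cap I_2=0$. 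Then $\mathcal{A}=U_G(\mathcal{L})/I$ is the desired algebra. The gain is that a quotient of $U_G(\mathcal{L})$ by a graded ideal meeting $\mathcal{L}$ trivially is \emph{automatically} a $G$-pair with $\mathcal{L}$, so the whole discussion of $\mathrm{End}(W)$ is sidestepped.

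On that discussion: your ``substantive step'' is in fact unnecessary, and you invoke \Cref{lem1} in the wrong direction. That lemma says a $G$-pair \emph{forces} $xy=0$ when the degrees do not commute; it is a consequence, not a criterion. By the paper's own definition (the ``Equivalently'' clause in the Notation opening Section~3), $(\mathcal{L},\rho)$ is a $G$-pair as soon as $\rho$ is a graded linear map that is a Lie monomorphism into $\mathcal{A}^{(-)}$, and you had already established both: $\rho(\mathcal{L}_g)\subseteq\mathcal{A}_g$, and $[\rho(x),\rho(y)]=\rho([x,y])$ with $\rho$ injective. The computation showing $\rho(x)\rho(y)=0$ is therefore correct but plays no logical role; the ``obstacle'' you worry about only matters if one seeks a \emph{strong} $G$-pair, and then it would have to be checked for all homogeneous elements of $\mathcal{A}$, not merely those in $\rho(\mathcal{L})$.
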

\begin{proof}
It is enough to find a graded ideal $I\subseteq U_G(\mathcal{L})$ of finite codimension such that $I\cap\mathcal{L}=0$. If $I$ is such an ideal then $\mathcal{A}=U_G(\mathcal{L})/I$ is a finite-dimensional $G$-graded associative algebra, and the natural graded linear map $\mathcal{L}\to\mathcal{A}$ has $\mathcal{L}\cap I=0$ as its kernel. Thu, $(\mathcal{L},\mathcal{A})$ is a pair.

Consider the adjoint map $\mathcal{L}\to\mathrm{IDer}\,\mathcal{L}$, and its extension $U_G(\mathcal{L})\to\mathrm{End}_\mathbb{F}(\mathcal{L})$. Let $I_1$ be the kernel of the latter map. Then it is known that $I_1\cap\mathcal{L}=\mathfrak{z}$, the center of $\mathcal{L}$. Since $\dim\mathrm{End}_\mathbb{F}\,\mathcal{L}<\infty$, we see that $I_1$ has finite codimension. Let $I_2$ be the ideal of Corollary \ref{rep_alg}, and let $I=I_1\cap I_2$. Then, by construction, $I$ is a graded ideal of finite codimension. Moreover,
\[
I\cap\mathcal{L}=(I_1\cap\mathcal{L})\cap I_2=\mathfrak{z}\cap I_2=0.
\]
The proof is complete.
\end{proof}

\begin{Remark}
There is no guarantee that we have a strong pair $(\mathcal{L},\mathcal{A})$ where $\mathcal{A}$ is finite-dimensional. This is because we cannot guarantee that $(\mathrm{IDer}\,\mathcal{L},\mathrm{End}_\mathbb{F}(\mathcal{L}))$ is a strong pair.
\end{Remark}

\section{Group gradings on Lie algebras\label{Lie_gr_is_ab}}

The following question was posed by I.~Shestakov: is any $G$-grading on a Lie algebra equivalent to an abelian group grading? We shall use our constructions to provide a partial answer to this question. To this end we need several technical lemmas.

First we need a map that behaves like a homomorphism from a free abelian group to an arbitrary group. This construction is useful for finding an equivalence between an abelian grading and a $G$-grading, where $G$ is not necessarily abelian.
\begin{Lemma}\label{mainlemma}
Let $G$ be a group, $g_1$, \dots, $g_m\in G$, and $Z=\langle a_1,\ldots,a_m\mid [a_i,a_j]=1,\forall i,j\rangle$ be the free abelian group of rank $m$. Let
\[
S=\{a_{i_1}\cdots a_{i_r}\in Z\mid\{g_{i_1},\ldots,g_{i_r}\}\text{ \gab}\}.
\]
Then there exists a well-defined map $\alpha:S\to G$ such that
\[
\alpha(a_{i_1}\cdots a_{i_r})=g_{i_1}\cdots g_{i_r}.
\]
\end{Lemma}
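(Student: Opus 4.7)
The plan is to prove well-definedness of $\alpha$ by exploiting the fact that elements of the free abelian group $Z$ are uniquely determined by exponent multisets, and the fact that in an abelian group, products are order-independent. Since $\alpha$ is prescribed on expressions, the only thing to verify is that two expressions representing the same element of $S$ map to the same element of $G$.

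First I would reduce the problem to the following: if $a_{i_1}\cdots a_{i_r} = a_{j_1}\cdots a_{j_s}$ in $Z$, and this common element belongs to $S$, then $g_{i_1}\cdots g_{i_r} = g_{j_1}\cdots g_{j_s}$ in $G$. Since $Z$ is the free abelian group on $a_1,\ldots,a_m$, each element is uniquely determined by its tuple of exponents $(n_1,\ldots,n_m)\in\mathbb{Z}^m$; because all our expressions involve only positive powers of the generators, the equality $a_{i_1}\cdots a_{i_r}=a_{j_1}\cdots a_{j_s}$ forces $r=s$ and forces the multisets $\{\!\{i_1,\ldots,i_r\}\!\}$ and $\{\!\{j_1,\ldots,j_s\}\!\}$ to coincide. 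Consequently the sequence $(g_{j_1},\ldots,g_{j_s})$ is a permutation of the sequence $(g_{i_1},\ldots,g_{i_r})$.

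Next I would invoke the assumption that the element lies in $S$, i.e., that the set $\{g_{i_1},\ldots,g_{i_r}\}$ generates an abelian subgroup of $G$. Since the multiset of $g$-entries is the same for both expressions, this condition is an intrinsic property of the element of $Z$ (independent of how we wrote it), so the same abelian subgroup contains all the factors on both sides. Inside this abelian subgroup, the product of any finite sequence of elements depends only on the multiset of factors and not on the order, so $g_{i_1}\cdots g_{i_r}=g_{j_1}\cdots g_{j_s}$, establishing that $\alpha$ is well-defined.

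There is no real obstacle here; the whole content is the observation that the defining property of $S$ guarantees that the product $g_{i_1}\cdots g_{i_r}$ takes place inside an abelian subgroup of $G$, which precisely matches the abelian relations imposed by $Z$. I should also note in passing that well-definedness is all that needs to be checked (existence of the image is automatic once well-definedness is established), and that the map $\alpha$ is not claimed to be a homomorphism on any larger domain — $S$ is only a subset of $Z$, typically not closed under multiplication, so the statement is really about a partial map compatible with the group structures on the subset where it makes sense.
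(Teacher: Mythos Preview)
Your proof is correct. Both you and the paper exploit the same underlying principle—that on the relevant indices the target product lives in an abelian subgroup of $G$—but the organization differs. You argue combinatorially: equality of positive words in the free abelian group $Z$ forces equality of the multisets of indices, so the two $G$-products differ only by a permutation and therefore coincide because all factors commute. The paper instead works structurally: for each index set $S_0$ with $\{g_i : i\in S_0\}$ generating an abelian subgroup, it invokes the universal property of the free abelian subgroup $Z_{S_0}=\langle a_i : i\in S_0\rangle$ to obtain a genuine group homomorphism $\alpha_{S_0}\colon Z_{S_0}\to G$, and then checks that these homomorphisms agree on overlaps (using $Z_{S_1}\cap Z_{S_2}=Z_{S_1\cap S_2}$ and closure of the family of admissible index sets under intersection). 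Your argument is shorter and more elementary; the paper's has the minor conceptual advantage that each $\alpha_{S_0}$ is an honest homomorphism, so the construction automatically makes sense on words involving inverses of the $a_i$, whereas your multiset argument is tied to positive words---though only positive words are needed for the application in the subsequent proposition.
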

\begin{proof}
Let
\[
\mathscr{F}=\left\{\{i_1,\ldots,i_r\}\subseteq\{1,\ldots,m\}\mid\{g_{i_1},\ldots,g_{i_r}\}\text{ \gab}\right\}.
\]
If $S_1$, $S_2\in\mathscr{F}$, then $S_1\cap S_2\in\mathscr{F}$. For each $S_0=\{i_1,\ldots,i_r\}\in\mathscr{F}$, let $G_{S_0}=\langle g_{i_1},\ldots g_{i_r}\rangle\subseteq G$ and $Z_{S_0}=\langle a_{i_1},\ldots, a_{i_r}\rangle$. Then $Z_{S_0}$ is a free abelian group of rank $r$, and $G_{S_0}$ is an abelian subgroup of $G$. Thus there is a well-defined group homomorphism $\alpha_{S_0}\colon Z_{S_0}\to G_{S_0}$ such that $a_{i_j}\mapsto g_{i_j}$, for each $i_j\in S_0$. Note that $S\subseteq\bigcup_{S_0\in\mathscr{F}}Z_{S_0}$. Now for each $s\in S$, let $s\in S_0\in\mathscr{F}$, and set
\[
\alpha(s)=\alpha_{S_0}(s).
\]
If $s\in S_1\cap S_2$, then since the maps are uniquely defined by their values on the generators, we have
\[
\alpha_{S_1}(s)=\alpha_{S_1\cap S_2}(s)=\alpha_{S_2}(s).
\]
Therefore $\alpha$ is well-defined.
\end{proof}

Let us consider once again the variety $\mathscr{V}_G$, defined in \Cref{variety}, that is, the variety of $G$-graded associative algebras satisfying the identities
\[
x_1^{(g)}x_2^{(h)}=0,\quad[g,h]\ne1,\,g,h\in G.
\]
Let $\mathcal{L}$ be a $G$-graded Lie algebra and $\mathcal{B}=\{e_i\mid i\in N\}$ a homogeneous vector space basis. We let $X_\mathcal{B}=\{x_i^{(g_i)}\mid i\in N,\,g_i=\deg_Ge_i\}$. Assume that $\{g_i\mid i\in N\}$ has $m$ elements, and denote these by $\{g_1,\ldots,g_m\}$. Let $Z=\langle a_1,\ldots,a_m\mid [a_i,a_j]=1,\forall i,j\rangle$ be the free abelian group of rank $m$. We consider the variables $X_Z=\{x_i^{(z_j)}\mid \deg e_i=g_j,\,i\in N\}$, the ``lifting" of the variables $X_\mathcal{B}$ to $Z$-graded variables. Let $\mathbb{F}\langle X_Z\rangle$ be the free associative $Z$-graded algebra, freely generated by $X_Z$, and $\mathbb{F}_G(X_\mathcal{B})$ the relatively free algebra in $\mathscr{V}_G$, freely generated by $X_\mathcal{B}$. We have an algebra epimorphism $\psi\colon \mathbb{F}\langle X_Z\rangle\to\mathbb{F}_G(X_\mathcal{B})$, where $\psi(x_i^{(z_j)})=x_i^{(g_j)}$, for each $i\in N$. Following \Cref{mainlemma}, let
\[
S=\{z_{i_1}\cdots z_{i_r}\in Z\mid\{g_{i_1},\ldots,g_{i_r}\}\text{ \gab}\},
\]
and let $\alpha\colon S\to G$ be defined by
\[
\alpha(z_{i_1}\cdots z_{i_r})=g_{i_1}\cdots g_{i_r}.
\]

\begin{Prop}\label{psi_is_alpha}
The map $\psi$ is $\alpha$-graded, that is,
\[
\psi\left(\mathbb{F}\langle X_Z\rangle\right)_z\subseteq\left(\mathbb{F}_G(X_\mathcal{L})\right)_{\alpha(z)},\quad\forall z\in S,
\]
and $\psi\left(\mathbb{F}\langle X_Z\rangle\right)_z=0$, for $z\in Z\setminus S$.
\end{Prop}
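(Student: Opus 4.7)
The plan is to verify the claim on a monomial of $\mathbb{F}\langle X_Z\rangle$ and extend by linearity, separating the cases $z\in S$ and $z\in Z\setminus S$. A monomial $m=x_{i_1}^{(z_{j_1})}\cdots x_{i_r}^{(z_{j_r})}$ of $Z$-degree $z=z_{j_1}\cdots z_{j_r}$ maps under $\psi$ to $x_{i_1}^{(g_{j_1})}\cdots x_{i_r}^{(g_{j_r})}\in\mathbb{F}_G(X_\mathcal{B})$, a $G$-homogeneous element of degree $g_{j_1}\cdots g_{j_r}$. Because $Z$ is free abelian on $z_1,\ldots,z_m$, the multiset $\{j_1,\ldots,j_r\}$, and hence the set $\{g_{j_1},\ldots,g_{j_r}\}$ of attached $G$-degrees, depends only on $z$; so it suffices to argue for one such $m$.

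If $z\in S$, then by definition $\{g_{j_1},\ldots,g_{j_r}\}$ generates an abelian subgroup of $G$, and \Cref{mainlemma} gives $\alpha(z)=g_{j_1}\cdots g_{j_r}$, from which the desired inclusion is immediate. If $z\in Z\setminus S$, then either $z$ has a negative coordinate in the basis $z_1,\ldots,z_m$, so that no monomial of $Z$-degree $z$ exists and the claim is vacuous, or every coordinate is non-negative and $\{g_{j_1},\ldots,g_{j_r}\}$ does not generate an abelian subgroup. In this latter case $\psi(m)$ is a monomial of $\mathbb{F}_G(X_\mathcal{B})$ whose attached degrees are not \gab, and \Cref{generators} forces it to vanish.

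The main subtlety is the last step: \Cref{generators} as stated only singles out the \gab monomials as a basis, so one must know that a non-\gab monomial in $\mathbb{F}_G(X_\mathcal{B})$ is genuinely zero, not merely a nontrivial combination of basis monomials sharing the same $G$-degree. This is the implicit content of the ``clearly generates'' line in the proof of \Cref{generators}, and I would justify it directly by induction on word length: any non-\gab sequence of degrees contains a contiguous subword $y_a\cdots y_c$ whose product $g_a\cdots g_c$ is not central in $\langle g_a,\ldots,g_c\rangle$, so some interior split $b$ satisfies $[g_a\cdots g_b, g_{b+1}\cdots g_c]\ne 1$, and the defining identity of $\mathscr{V}_G$ then annihilates the subword, and with it $\psi(m)$.
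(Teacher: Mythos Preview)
Your argument is correct and reaches the same conclusion as the paper, but the organization and the key inductive step differ. The paper runs a single induction on the length $r$ of the monomial: if $m_0=x_{i_1}^{(z_{i_1})}\cdots x_{i_{r-1}}^{(z_{i_{r-1}})}$ already satisfies $\psi(m_0)=0$ we are done; otherwise the first $r-1$ degrees generate an abelian subgroup, and one checks the \emph{tail} commutators $[g_{i_j}\cdots g_{i_{r-1}},g_{i_r}]$. If some tail fails to commute with $g_{i_r}$, the $T_G$-identity annihilates that subword; if all tails commute, a backward induction forces every $g_{i_j}$ to commute with $g_{i_r}$, so the whole set is \gab and $\deg_G\psi(m)=\alpha(\deg_Z m)$.

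You instead split into the cases $z\in S$ and $z\notin S$, handle the first directly via $\alpha$, and reduce the second to the vanishing of non-\gab monomials in $\mathbb{F}_G(X_\mathcal{B})$. You are right that \Cref{generators} leaves this implicit, and your sketched justification is ultimately valid, but the implication ``product $g_a\cdots g_c$ not central in $\langle g_a,\ldots,g_c\rangle$ $\Rightarrow$ some interior split has non-commuting halves'' is itself a nontrivial group-theoretic lemma (it amounts to showing that if every split commutes then the product commutes with every generator), which you assert with ``so'' but do not prove. The paper's tail-condition argument sidesteps this lemma entirely and gives a cleaner induction; your route works but carries this extra unproved step.
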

\begin{proof}
Let $m=x_{i_1}^{(z_{i_1})}\cdots x_{i_r}^{(z_{i_r})}\in\mathbb{F}\langle X_Z\rangle$. It is enough to show that either $\psi(m)=0$ or $\psi(m)$ is homogeneous  of degree $\alpha(z_{i_1})\cdots\alpha(z_{i_r})$ in the $G$-grading. The proof is by induction on $r$, with obvious basis the case $r=1$. Let $m_0=x_{i_1}^{(z_{i_1})}\cdots x_{i_{r-1}}^{(z_{i_{r-1}})}$. There is nothing to do if $\psi(m_0)=0$. Otherwise, the subset $\{\alpha(z_{i_1}),\ldots,\alpha(z_{i_{r-1}})\}$ generates an abelian group. If, for some $j$,
\[
[\alpha(z_{i_j})\alpha(z_{i_{j+1}})\cdots\alpha(z_{i_{r-1}}),\alpha(z_{i_r})]\ne1,
\]
then $\psi(m)=0$. Otherwise, $\{\alpha(z_{i_1}),\ldots,\alpha(z_{i_{r-1}}),\alpha(z_{i_r})\}$ generates an abelian subgroup as well. Hence, either $\psi(m)=0$ or $\deg_G\psi(m)=\alpha(\deg_Z m)$.
\end{proof}

\begin{Cor}\label{F_G_is_coarsening_ab}
The grading of $\mathbb{F}_G(X_\mathcal{B})$ is a coarsening of a grading realized by the respective universal abelian grading group.
\end{Cor}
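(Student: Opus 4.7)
The plan is to push the $Z$-grading of $\mathbb{F}\langle X_Z\rangle$ down through $\psi$ to obtain an abelian grading on $\mathbb{F}_G(X_\mathcal{B})$ refining $\Gamma$, and then apply \Cref{psi_is_alpha}.

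First, I would identify $\psi$ as a quotient map. Under the obvious bijection $x_i^{(z_j)}\leftrightarrow x_i^{(g_j)}$ at the level of free variables, the underlying (ungraded) algebra homomorphism $\psi$ coincides with the canonical projection $\mathbb{F}\langle X_\mathcal{B}\rangle\to\mathbb{F}_G(X_\mathcal{B})$. Its kernel is therefore the $T$-ideal generated by the defining relations of $\mathscr{V}_G$; back in $\mathbb{F}\langle X_Z\rangle$ these are the monomials $x_i^{(z_i)}x_j^{(z_j)}$ with $[g_i,g_j]\ne 1$. Each such monomial is $Z$-homogeneous of degree $z_iz_j$, so $\mathrm{Ker}\,\psi$ is generated by $Z$-homogeneous elements and is thus a $Z$-graded ideal. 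Consequently $\mathbb{F}_G(X_\mathcal{B})\cong\mathbb{F}\langle X_Z\rangle/\mathrm{Ker}\,\psi$ inherits a $Z$-grading, which I will call $\Gamma_Z$.

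Next, I would invoke \Cref{psi_is_alpha}: it states exactly that $\psi$ sends the $Z$-homogeneous component of degree $z$ into the $G$-homogeneous component of degree $\alpha(z)$ (when $z\in S$), and to zero otherwise. Translated to the quotient, this says $(\mathbb{F}_G(X_\mathcal{B}))_z^{\Gamma_Z}\subseteq(\mathbb{F}_G(X_\mathcal{B}))_{\alpha(z)}^{\Gamma}$ for every $z\in S$, while the $\Gamma_Z$-component is zero for $z\in Z\setminus S$. By the definition of refinement, $\Gamma_Z$ refines $\Gamma$, i.e.\ $\Gamma$ is a coarsening of $\Gamma_Z$.

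Finally, since $Z$ is abelian, $\Gamma_Z$ is an abelian grading on $\mathbb{F}_G(X_\mathcal{B})$; hence, by the characterization recalled just after the definition of the universal abelian grading group in the preliminaries, $\Gamma_Z$ is realized by its own universal abelian grading group. This gives the conclusion: $\Gamma$ is a coarsening of a grading on $\mathbb{F}_G(X_\mathcal{B})$ realized by an abelian group, namely the universal abelian grading group of $\Gamma_Z$. The only non-formal point is verifying that $\mathrm{Ker}\,\psi$ is $Z$-graded, and this reduces to the observation above that the defining relations of $\mathscr{V}_G$ lift to $Z$-homogeneous elements of $\mathbb{F}\langle X_Z\rangle$; everything else is a direct application of \Cref{psi_is_alpha} and the general language of refinement and realization.
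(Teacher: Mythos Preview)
Your proposal is correct and follows essentially the same approach as the paper: both push the $Z$-grading of $\mathbb{F}\langle X_Z\rangle$ through $\psi$ to obtain an abelian grading on $\mathbb{F}_G(X_\mathcal{B})$ refining the $G$-grading, invoking \Cref{psi_is_alpha} for the refinement relation. Your version simply supplies the detail the paper leaves implicit, namely that $\mathrm{Ker}\,\psi$ is $Z$-graded; one clean way to see this (bypassing any worry about whether $T_G$-closure preserves $Z$-homogeneity) is to note from \Cref{generators} that $\mathrm{Ker}\,\psi$ is spanned by the non-\gab monomials, each of which is $Z$-homogeneous.
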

\begin{proof}
It follows from \Cref{psi_is_alpha} that there is a fine abelian grading on $\mathbb{F}_F(X_Z)$ (obtained as a quotient of $\mathbb{F}\langle X_Z\rangle$) that is a refinement of $\mathbb{F}_G(X_\mathcal{B})$.
\end{proof}

We do not know yet a proof for the following.
\begin{Conj}\label{coarsening_ab_is_ab}
Let $\Gamma$ be a group grading on a Lie algebra given by a coarsening of an abelian grading. Then $\Gamma$ is abelian.
\end{Conj}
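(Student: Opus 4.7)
The plan is to show $\Gamma$ is abelian by exhibiting an injective, partial-multiplication-preserving map from $\mathrm{Supp}\,\Gamma$ to some abelian group. Equivalently, in the language of the paper, it suffices to show that the natural map $\mathrm{Supp}\,\Gamma\hookrightarrow U_{\mathrm{ab}}(\Gamma)$ is injective, since $\Gamma$ is abelian iff $U_{\mathrm{ab}}(\Gamma)$ realizes it.

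Let the refining abelian grading $\Gamma_a$ be realized by an abelian group $A$, and let $f\colon\mathrm{Supp}\,\Gamma_a\to\mathrm{Supp}\,\Gamma$ be the coarsening map so that $\mathcal{L}'_a\subseteq\mathcal{L}_{f(a)}$. A natural candidate for an abelian group realizing $\Gamma$ is $B:=A/N$, where
\[
N:=\langle a_1a_2^{-1}\mid a_1,a_2\in\mathrm{Supp}\,\Gamma_a,\;f(a_1)=f(a_2)\rangle.
\]
The induced map $\bar f\colon\mathrm{Supp}\,\Gamma\to B$, defined by $\bar f(g):=aN$ for any $a\in f^{-1}(g)$, is then well-defined by construction. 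Multiplicativity is routine: if $0\ne[\mathcal{L}_{g_1},\mathcal{L}_{g_2}]\subseteq\mathcal{L}_{g_3}$, one can choose $a_i\in f^{-1}(g_i)$ with $[\mathcal{L}'_{a_1},\mathcal{L}'_{a_2}]\ne0$, giving $a_1a_2\in f^{-1}(g_3)$, whence $\bar f(g_1)\bar f(g_2)=a_1a_2N=\bar f(g_3)$ using that $A$ is abelian. A dual approach uses \Cref{morphism_coarsening} to produce a group epimorphism $p\colon U(\Gamma_a)\to U(\Gamma)$ respecting supports, which abelianizes to $\bar p\colon U_{\mathrm{ab}}(\Gamma_a)\to U_{\mathrm{ab}}(\Gamma)$; one would then argue that $\bar p$ does not merge elements of $\mathrm{Supp}\,\Gamma_a$ lying in distinct fibers of $f$, using that $\mathrm{Supp}\,\Gamma_a\hookrightarrow U_{\mathrm{ab}}(\Gamma_a)$ by abelianness of $\Gamma_a$.

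The entire difficulty is injectivity of $\bar f$ on $\mathrm{Supp}\,\Gamma$: one must rule out the possibility that $a(a')^{-1}\in N$ for two elements $a,a'\in\mathrm{Supp}\,\Gamma_a$ with $f(a)\ne f(a')$. A priori, an integer linear combination in $A$ of intra-fiber differences could reassemble into an inter-fiber difference between elements of $\mathrm{Supp}\,\Gamma_a$, which would break the construction and force one to seek a different abelian target $B$ (possibly $U_{\mathrm{ab}}(\Gamma)$ itself). Controlling this appears to require genuine use of the Lie bracket: antisymmetry $[x,y]=-[y,x]$ already forces $g_1g_2=g_2g_1$ in $G$ whenever $[\mathcal{L}_{g_1},\mathcal{L}_{g_2}]\ne0$, so every defining relator of $U(\Gamma)$ comes from a ``commuting triple'', and one might hope to leverage the Jacobi identity together with the PBW-type results of \Cref{gradedPBW} to preclude any spurious identification among elements of $\mathrm{Supp}\,\Gamma$ in $U_{\mathrm{ab}}(\Gamma)$. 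This is precisely the difficulty the authors flag in listing the statement as a conjecture, and I expect any full proof to hinge on a delicate structural argument relating the quotient $A/N$ (equivalently, the kernel of $\bar p$) to intrinsic features of $\mathcal{L}$ that are not visible at the purely group-theoretic level.
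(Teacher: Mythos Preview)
The statement you are attempting to prove is stated in the paper as a \emph{conjecture}, not a theorem: the authors explicitly write ``We do not know yet a proof for the following'' immediately before it, and the subsequent results (\Cref{SU_G_abelian}, \Cref{Liegr_isab}, \Cref{problem_equiv}) are all conditional on its validity. There is therefore no proof in the paper to compare your proposal against.

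Your proposal is not a proof either, and to your credit you recognize this. The construction of $B=A/N$ and the map $\bar f$ is the natural first attempt, and your verification of multiplicativity is correct. But, as you yourself note, the entire content of the conjecture lies in the injectivity of $\bar f$ (equivalently, in showing that $U_{\mathrm{ab}}(\Gamma)$ still separates the support), and nothing in your write-up supplies an argument for this. The heuristic remarks about antisymmetry forcing commuting pairs, the Jacobi identity, and the PBW basis of \Cref{gradedPBW} do not amount to a mechanism that prevents intra-fiber relations in $A$ from assembling into an inter-fiber coincidence; indeed, the paper's \Cref{problem_equiv} shows that the conjecture is \emph{equivalent} to the general question of whether every Lie grading is abelian, so a proof here would resolve Shestakov's question outright.

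In short: you have correctly located the obstruction and articulated why it is genuine, but what you have written is a discussion of the difficulty rather than a resolution of it. That is consistent with the paper, which leaves the statement open.
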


\begin{Example}
\Cref{coarsening_ab_is_ab} is false in a context other than of a Lie algebra. Indeed, we know that on any matrix algebra $M_n(\mathbb{F})$ there is a unique fine elementary grading, up to an equivalence, and it may be realized by an abelian group (see, for instance, \cite[Proposition 2.31]{EK13}). Thus, any elementary grading on a matrix algebra is a coarsening of an abelian grading. On the other hand, it is not hard to see that an abelian grading on a finite-dimensional algebra is equivalent to a finite group grading (see also \cite[Proposition 4.11]{GordSch}). In \cite[Corollary 5.6]{GordSch}, it is proved that there exists an elementary grading $\Gamma$ on a matrix algebra $M_n(\mathbb{F})$ that cannot be realized by a finite group. As a consequence, $\Gamma$ cannot be realized by an abelian group, and it is a coarsening of an abelian grading.
\end{Example}

Putting the pieces together (\Cref{F_G_is_coarsening_ab} and \Cref{coarsening_ab_is_ab}), we obtain the following lemma.
\begin{Lemma}\label{SU_G_abelian}
Let $\mathcal{L}$ be a $G$-graded Lie algebra. Then, if \Cref{coarsening_ab_is_ab} is true, the grading on $\mathrm{SU}_G(\mathcal{L})$ is equivalent to an abelian grading.
\end{Lemma}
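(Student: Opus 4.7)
My plan is to produce an abelian refinement of the $G$-grading on $\mathrm{SU}_G(\mathcal{L})$ and then apply \Cref{coarsening_ab_is_ab}. The construction adapts the one behind \Cref{F_G_is_coarsening_ab}, with additional relations imposed to capture the Lie structure of $\mathcal{L}$.

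First, fix a homogeneous basis $\mathcal{B}=\{e_i\mid i\in N\}$ of $\mathcal{L}$ with structure constants $\alpha_{ij}^{(k)}$, and write the distinct $G$-degrees as $\{g_1,\ldots,g_m\}$, with $\sigma\colon N\to\{1,\ldots,m\}$ given by $\deg_Ge_i=g_{\sigma(i)}$. Let $Z=\langle a_1,\ldots,a_m\mid[a_i,a_j]=1\rangle$ be the free abelian group from \Cref{mainlemma}, and let $R$ be the subgroup of $Z$ generated by all $a_{\sigma(i)}a_{\sigma(j)}a_{\sigma(k)}^{-1}$ with $\alpha_{ij}^{(k)}\ne0$. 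Set $\widetilde Z:=Z/R$. Whenever $\alpha_{ij}^{(k)}\ne0$, the homogeneity of the Lie bracket forces both $[g_{\sigma(i)},g_{\sigma(j)}]=1$ and $g_{\sigma(k)}=g_{\sigma(i)}g_{\sigma(j)}$ in $G$, so the assignment $\bar a_i\mapsto g_i$ descends to a group homomorphism $\pi\colon\widetilde Z\to G$.

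Next, I view $\mathrm{SU}_G(\mathcal{L})$ as a quotient of $\mathbb{F}\langle X_\mathcal{B}\rangle$ by the combination of the $\mathscr{V}_G$-identities $x_ix_j$ (for $[g_{\sigma(i)},g_{\sigma(j)}]\ne1$) with the Lie relations $x_ix_j-x_jx_i-\sum_k\alpha_{ij}^{(k)}x_k$. Equip $\mathbb{F}\langle X_\mathcal{B}\rangle$ with the $\widetilde Z$-grading $\deg_{\widetilde Z}x_i=\bar a_{\sigma(i)}$. The $\mathscr{V}_G$-identities are monomials, hence $\widetilde Z$-homogeneous; and in each Lie relation all three kinds of summand share the common $\widetilde Z$-degree $\bar a_{\sigma(i)}\bar a_{\sigma(j)}$, precisely because $a_{\sigma(i)}a_{\sigma(j)}a_{\sigma(k)}^{-1}\in R$. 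Therefore the combined defining ideal is $\widetilde Z$-graded, and $\mathrm{SU}_G(\mathcal{L})$ inherits a $\widetilde Z$-grading.

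Finally, since $\pi$ sends the $\widetilde Z$-degree of each generator of $\mathrm{SU}_G(\mathcal{L})$ to its $G$-degree, the $\widetilde Z$-grading refines the $G$-grading. Viewed as a Lie algebra via the commutator (which is well-defined since $(\mathcal{L},\mathrm{SU}_G(\mathcal{L}))$ is a strong pair), $\mathrm{SU}_G(\mathcal{L})^{(-)}$ then carries a $G$-grading that is a coarsening of the abelian $\widetilde Z$-grading, and \Cref{coarsening_ab_is_ab} delivers the conclusion. The main point to watch is that the set of generators of $R$ is exactly what is needed to make $J_\mathcal{B}$ homogeneous while keeping $\pi$ well-defined; this reduces to the double observation about $\alpha_{ij}^{(k)}$ noted above. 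The rest of the argument is a direct adaptation of \Cref{F_G_is_coarsening_ab} through the quotient $\mathbb{F}_G(X_\mathcal{B})\twoheadrightarrow\mathrm{SU}_G(\mathcal{L})$.
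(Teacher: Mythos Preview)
Your argument contains a genuine gap at the point where you assert that $\bar a_i\mapsto g_i$ defines a group homomorphism $\pi\colon\widetilde Z\to G$. The group $\widetilde Z=Z/R$ is abelian, so any homomorphism out of it has abelian image; but the image of your $\pi$ would be $\langle g_1,\ldots,g_m\rangle\le G$, which is \emph{not} assumed to be abelian. Your justification only checks that the generators of $R$ map to the identity (using $g_{\sigma(k)}=g_{\sigma(i)}g_{\sigma(j)}$ when $\alpha_{ij}^{(k)}\ne0$), but it ignores the defining relations $[a_i,a_j]=1$ of $Z$ itself, which would force $[g_i,g_j]=1$ for \emph{all} pairs $i,j$. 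Already in \Cref{abelian_lie_example} (two-dimensional abelian $\mathcal{L}$ graded by $C_2\ast C_2$) one has $R=0$, $\widetilde Z\cong\mathbb{Z}^2$, and there is no homomorphism $\mathbb{Z}^2\to C_2\ast C_2$ sending the generators to $g,h$.

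Without $\pi$ being a homomorphism, your inference ``$\pi$ matches degrees on generators, hence the $\widetilde Z$-grading refines the $G$-grading'' fails: two monomials with the same $\widetilde Z$-degree need not have the same $G$-degree, and the point is precisely that the bad cases are killed by the $\mathscr{V}_G$-relations, which must be argued (this is the content of \Cref{psi_is_alpha}). To repair your approach you would have to show that the map $\alpha\colon S\to G$ of \Cref{mainlemma} descends to the image of $S$ in $\widetilde Z$, which is not obvious. The paper sidesteps this entirely: it first applies \Cref{F_G_is_coarsening_ab} and \Cref{coarsening_ab_is_ab} to the Lie algebra $\mathbb{F}_G(X_\mathcal{B})^{(-)}$ (where the refinement by the $Z$-grading is already established), obtaining an equivalence to an abelian grading there, and then observes that equivalences pass to graded ideals and their quotients, giving the result for $\mathrm{SU}_G(\mathcal{L})\cong\mathbb{F}_G(X_\mathcal{B})/J_\mathcal{B}$ without ever needing $J_\mathcal{B}$ to be homogeneous for the abelian refinement.
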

\begin{proof}
From \Cref{F_G_is_coarsening_ab} and \Cref{coarsening_ab_is_ab}, we obtain that the grading on $\mathbb{F}_G(X_\mathcal{B})$ is equivalent to an abelian grading. Now the equivalence holds for the ideal $J_\mathcal{B}$ and its quotient $\mathbb{F}_G(X_\mathcal{B})/J_\mathcal{B}\cong\mathrm{SU}_G(\mathcal{L})$.
\end{proof}

As a consequence, we obtain the main result of this section.
\begin{Thm}\label{Liegr_isab}
If \Cref{coarsening_ab_is_ab} is true, then every $G$-grading on a Lie algebra is equivalent to an abelian grading.
\end{Thm}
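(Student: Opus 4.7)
The strategy is to transfer the abelian equivalence from $\mathrm{SU}_G(\mathcal{L})$ down to $\mathcal{L}$ by restriction. The starting point is that \Cref{gradedPBW} (and its corollary) gives a $G$-graded embedding $\mathcal{L}\hookrightarrow\mathrm{SU}_G(\mathcal{L})$, so $\mathcal{L}_g=\mathcal{L}\cap\mathrm{SU}_G(\mathcal{L})_g$ for every $g\in G$. Assuming \Cref{coarsening_ab_is_ab}, \Cref{SU_G_abelian} supplies an abelian group $H$, an $H$-grading $\mathrm{SU}_G(\mathcal{L})=\bigoplus_{h\in H}\mathrm{SU}_G(\mathcal{L})_h'$, and an algebra automorphism $\psi$ of $\mathrm{SU}_G(\mathcal{L})$ such that $\psi(\mathrm{SU}_G(\mathcal{L})_g)=\mathrm{SU}_G(\mathcal{L})_{h(g)}'$ for a suitable $h(g)\in H$ and each $g$ in the support.

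The key step is to pull this $H$-grading back along $\psi$: set $A_h:=\psi^{-1}(\mathrm{SU}_G(\mathcal{L})_h')$. Since $\psi$ is an algebra automorphism, $\mathrm{SU}_G(\mathcal{L})=\bigoplus_h A_h$ is again an $H$-grading compatible with the product, and the identity $A_{h(g)}=\mathrm{SU}_G(\mathcal{L})_g$ shows that the partition of $\mathrm{SU}_G(\mathcal{L})$ into homogeneous components coincides with the one coming from the $G$-grading, merely relabeled by $H$. Restricting this $H$-grading to $\mathcal{L}$ by $\mathcal{L}_h':=\mathcal{L}\cap A_h$ then gives an $H$-grading on $\mathcal{L}$ with $\mathcal{L}_{h(g)}'=\mathcal{L}\cap\mathrm{SU}_G(\mathcal{L})_g=\mathcal{L}_g$. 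Consequently the identity map on $\mathcal{L}$ realizes an equivalence between the original $G$-grading and this new $H$-grading, and since $H$ is abelian, the claim follows.

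The main obstacle is psychological rather than technical: the automorphism $\psi$ on $\mathrm{SU}_G(\mathcal{L})$ need not preserve the subspace $\mathcal{L}$, so one cannot simply restrict $\psi$ to produce an automorphism of $\mathcal{L}$ that witnesses the equivalence. The workaround is to work at the level of the partition of $\mathrm{SU}_G(\mathcal{L})$ into homogeneous components: $\psi$ only permutes and relabels these components, and such a relabeling commutes with intersection with the subspace $\mathcal{L}$. All the substantive mathematical content is already packaged inside \Cref{SU_G_abelian}, namely the relatively free algebra reduction of \Cref{F_G_is_coarsening_ab} together with the coarsening-of-abelian input of \Cref{coarsening_ab_is_ab}; the descent from $\mathrm{SU}_G(\mathcal{L})$ to $\mathcal{L}$ is purely formal.
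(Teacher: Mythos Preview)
Your proposal is correct and follows the same approach as the paper: embed $\mathcal{L}$ as a graded subalgebra of $\mathrm{SU}_G(\mathcal{L})$, invoke \Cref{SU_G_abelian} to get an abelian-equivalent grading on the ambient algebra, and then restrict to $\mathcal{L}$. The paper compresses the restriction step into a single ``hence'', while you spell out explicitly the pullback of the $H$-grading along $\psi$ so that the identity map witnesses the equivalence on $\mathcal{L}$; this is a welcome clarification, since (as you note) $\psi$ need not preserve $\mathcal{L}$.
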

\begin{proof}
Let $\mathcal{L}$ be a $G$-graded algebra. By construction (\Cref{strong_gr_universal}), $\mathcal{L}$ is a graded subalgebra of $\mathrm{SU}_G(\mathcal{L})$. From \Cref{SU_G_abelian}, the grading on $\mathrm{SU}_G(\mathcal{L})$ is equivalent to an abelian grading. Hence the grading on $\mathcal{L}$ is equivalent to an abelian grading as well.
\end{proof}

We summarize the results of this section. We obtained the equivalence of the following two problems.
\begin{Cor}\label{problem_equiv}
The following assertions are equivalent.
\begin{enumerate}
\item Every group grading on a Lie algebra is equivalent to an abelian group grading.
\item Every coarsening of an abelian group grading on a Lie algebra is equivalent to an abelian group grading.
\end{enumerate}
\end{Cor}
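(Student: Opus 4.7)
The proof should be very short: statement (2) is literally the content of Conjecture~\ref{coarsening_ab_is_ab} (unpacking the definition that ``abelian grading'' means ``equivalent to a grading by an abelian group''), so the Corollary is essentially a bookkeeping consequence of Theorem~\ref{Liegr_isab}. The plan is to prove the two implications separately.

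For $(1)\Rightarrow(2)$, I would simply observe that any coarsening of a group grading on a Lie algebra is still a group grading on that Lie algebra; therefore, if hypothesis (1) holds, then in particular every coarsening of an abelian grading (being one such grading) is equivalent to an abelian grading. No extra machinery is needed here.

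For $(2)\Rightarrow(1)$, I would note that (2) is a restatement of Conjecture~\ref{coarsening_ab_is_ab}: the conjecture asserts precisely that a group grading on a Lie algebra arising as a coarsening of an abelian grading must itself be abelian, which by definition means equivalent to an abelian group grading. Having identified (2) with the hypothesis of Theorem~\ref{Liegr_isab}, the conclusion of that theorem is exactly (1), so invoking it closes this direction.

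There is no real obstacle, since both directions are formal. The only mild subtlety worth flagging explicitly in the write-up is that the phrase ``$\Gamma$ is abelian'' in Conjecture~\ref{coarsening_ab_is_ab} must be unfolded via the definition given in Section~2 (``realized by an abelian group,'' i.e., equivalent to a grading by an abelian group) in order to match the wording of statement (2). Once that identification is made, the corollary reduces to citing Theorem~\ref{Liegr_isab} in one direction and the tautology ``a coarsening of a grading is a grading'' in the other.
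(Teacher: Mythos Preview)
Your proposal is correct and follows essentially the same approach as the paper: the paper's proof simply states that (1)$\Rightarrow$(2) is clear and that (2)$\Rightarrow$(1) follows by invoking \Cref{Liegr_isab}, which is exactly your plan (with a bit more explicit unpacking of why (2) coincides with \Cref{coarsening_ab_is_ab}).
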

\begin{proof}
Clearly (1) implies (2). Now, assuming that (2) holds valid, \Cref{Liegr_isab} proves (1).
\end{proof}

\section*{Acknowledgements}
We are thankful to Professor Ivan Shestakov, Plamen Koshlukov, Lucia Ikemoto and Mikhail Kochetov for the useful discussion and encouragement.

\end{document}